 \newcommand{\bigslant}[2]{{\left.\raisebox{.2em}{$#1$}\middle/\raisebox{-.2em}{$#2$}\right.}}
\definecolor{DarkGreen}{HTML}{1cad22}
\newtheorem{definition}{Definition}[section]
\newtheorem{theorem}[definition]{Theorem}
\newtheorem{proposition}[definition]{Proposition}
\newtheorem{lemma}[definition]{Lemma}
\newtheorem{corollary}[definition]{Corollary}
\newtheorem{remark}[definition]{Remark}
\newtheorem{example}[definition]{Example}
\theoremstyle{remark}
\title{Atiyah classes of Lie algebroid homotopy modules}
\author{Panagiotis Batakidis$^\dagger$ \\\href{mailto:batakidis@math.auth.gr}{batakidis@math.auth.gr}  \and Sylvain Lavau$^\ddag$
\\\href{mailto:lavau@math.univ-lyon1.fr}{lavau@math.univ-lyon1.fr}}
\date{$^\dagger$ Department of Mathematics, Aristotle University of Thessaloniki\\%
    $^\ddag$ Division of Theoretical Physics, Ru\dj er Bo\u{s}kovi\'c Institute\\[2ex]%
}
\begin{document}

\maketitle

\begin{abstract}
\noindent
For a Lie algebroid pair $A\hookrightarrow L$ we study cocycles constructed from the extension to $L$ of the higher connection forms of a representation up to homotopy $E$ of the Lie algebroid $A$. We show that there exists a cohomology class with values in the endomorphism bundle of $E$ that is independent of the extension above and vanishes whenever a homotopy $A$-compatible extension exists. Whenever the representation up to homotopy $E$ is the resolution of a Lie algebroid representation $K$ of $A$, it is shown that there exists a quasi-isomorphism sending the new Atiyah class to the classical one, associated to extensions to $L$ of the Lie algebroid representation $K$.
\end{abstract}

\vspace{5mm} \noindent {\emph{Keywords: }} Lie pairs; representations up to homotopy; Atiyah class; graded geometry.

\vspace{3mm} \noindent MSC (2020): 53C05, 58J10, 58A15, 16S32, 17B35, 17B55, 70G45.

\tableofcontents

\section{Introduction}

This paper is the first step towards the definition and study of an analogue of the Atiyah-Molino class for singular foliations. Although the regular foliation case $(M,\mathcal{F})$ is by now standard, and can be treated with the use of Lie (algebroid) pairs $A\hookrightarrow L$, taking $L=TM$ and $A=T\mathcal{F}$ (see \cite{chenAtiyahClassesHomotopy2016}), the fact that in the singular case the dimension of the leaves is non-constant prevents one from using the traditional definitions fit for vector bundles. 
As is standard in that situation, one may replace the singular foliation by (one of) its universal Lie $\infty$-algebroids\footnote{A universal Lie $\infty$-algebroid of a singular foliation $\mathcal{F}$ is a Lie $\infty$-algebroid covering the singular foliation~$\mathcal{F}$ and such that the linear part is a resolution of $\mathcal{F}$.} \cite{laurent-gengouxUniversalLieInfinityalgebroid2020}.  
Following the classical strategy, one would then want to define pairs of Lie $\infty$-algebroids and then  define the Atiyah-Molino class of a singular foliation as the Atiyah class of an appropriate Lie $\infty$-algebroid pair. The same remark applies to singular subalgebroids \cite{zambonSingularSubalgebroids2022}.

Towards this goal, we consider Lie algebroid pairs $A\hookrightarrow L$ and provide here a definition of the Atiyah class of the extension of a  representation up to homotopy (r.u.t.h.) $E$ of~$A$ to an $L$-superconnection on $E$. We show that the construction is consistent with the traditional notion of Atiyah class of Lie algebroid pairs \cite{chenAtiyahClassesHomotopy2016} when the representation up to homotopy is (a resolution of) 
a classical Lie algebroid representation. Namely, in that particular case, the former does not contain any further information that is not already contained in the later.

From the present intermediate step, we plan, in a future paper, to construct the Atiyah class of Lie $\infty$-algebroid pairs (whose representations are by definition up to homotopy). This construction should naturally reduce to that given for $L_\infty$-algebra pairs \cite{chenAtiyahClassesStrongly2019} when the Lie $\infty$-algebroid pairs are over a point. Schematically we have the following steps towards the Atiyah class of singular foliations:
\begin{center}
    \begin{tikzpicture}
\matrix(a)[matrix of math nodes, 
row sep=5em, column sep=4em, 
text height=1.5ex, text depth=0.25ex] 
{&&\begin{tabular}[c]{@{}c@{}}$L_\infty$-algebra pairs\\ \cite{chenAtiyahClassesStrongly2019} \end{tabular}\\
\begin{tabular}[c]{@{}c@{}}Lie algebroid pairs\\ \cite{chenAtiyahClassesHomotopy2016}\end{tabular}& \begin{tabular}[c]{@{}c@{}}Lie algebroid pairs\\ \& reps. up to homotopy\\ (this paper)\end{tabular}&\text{Lie $\infty$-algebroid pairs}\\};
\path[->, dashed](a-2-1) edge (a-2-2);
\path[->, dashed](a-2-2) edge (a-2-3);
\path[->, dashed](a-1-3) edge[right,"\emph{oidization}"] (a-2-3);
\end{tikzpicture}
\end{center}

In Section \ref{sec0} we first recall the notions of superconnections and representations up to homotopy $(E,D_A)$ of a Lie algebroid $A$, that we also call \emph{homotopy $A$-modules}. These are precisely the graded vector bundles $E$ over $M$ equipped with a flat $A$-superconnection $D_A$ on $E$. In particular, this differential operator involves a degree $+1$ vector bundle morphism $\partial:E_\bullet\to E_{\bullet+1}$ squaring to zero and controlling the homotopy theory.
We then discuss the extension to $L$ of the higher connection forms of a not necessarily flat $A$-superconnection  on $E$. For flat $A$-superconnections we define homotopy $A$-compatible superconnections of $L$ on $E$ (Definition \ref{def:compatible}) and construct the Atiyah class of representations up to homotopy for Lie algebroid pairs. The first main result of this section is Theorem \ref{main1} which is a generalization to the context of representations up to homotopy of \cite[Theorem 2.5]{chenAtiyahClassesHomotopy2016}. It exhibits the existence of a distinguished cohomology class associated to any  representation up to homotopy $E$ with respect to the Lie algebroid pair $A\hookrightarrow L$, that we call the \emph{Atiyah class} of $(E,D_A)$.
The notion is well defined with respect to the set of isomorphism
classes of representations up to homotopy of $A$ \cite{abadRepresentationsHomotopyLie2011}, as two isomorphic r.u.t.h. of $A$ have simultaneously vanishing Atiyah classes (Proposition~\ref{propisom}).

The section ends with a look at the case where the homotopy $A$-module $E$ is a regular resolution of some vector bundle $K$. We call a representation up to homotopy \emph{regular} if its chain map $\partial$ is of constant rank. Being a resolution of a vector bundle $K$ means, in the present context, that $K= H^0(E,\partial)$. 
The second main result of this section is Theorem~\ref{prop:BRST} stating that there is an isomorphism between the cohomology spaces where the Atiyah classes of $E$ and $K$ live, and in fact the first is sent to the second. The main results of this section are gathered in the following:

\begin{theorem}\label{Theorem1}
    Let $A\hookrightarrow L$ be a Lie pair, $(E,D_A)$ be a representation up to homotopy of $A$, and $D_L$ be a superconnection of $L$ on $E$ extending $D_A$.  Then, $D_L$ induces a total degree~$+1$ differential $s$  on the bigraded space  
    \begin{equation*}
    \widehat{\Omega}(E)=\bigoplus_{j=-\infty}^\infty\bigoplus_{k=0}^{\mathrm{rk}(A)}\Omega^k(A,A^\circ\otimes\mathrm{End}_{j}(E)),
\end{equation*}
making the graded vector space $A^\circ\otimes\mathrm{End}(E)$ a representation up to homotopy of $A$.
 There exists a 1-cocycle $\alpha=\sum_k\alpha^{(k)}$ of the cochain complex $\big(\widehat{\Omega}(E),s\big)$ such that:
\begin{enumerate}
    \item Its cohomology class $[\alpha]\in H^1\big(\widehat{\Omega}(E),s\big)$ does not depend on the choice of extension~$D_L$ of the $A$-superconnection $D_A$; 
    \item The cohomology class $[\alpha]$ vanishes if an only if there exists a homotopy $A$-compatible $L$-superconnection on $E$;

\item Suppose $E$ is a non-positively graded regular homotopy $A$-module, and that $\mathcal{H}^\bullet(E,\partial)=\mathcal{H}^0(E,\partial)=:K$ (in which case $E$ is a resolution of $K$). Then for every $p\in\mathbb{Z}$, there is an isomorphism
\begin{equation*}
    H^p\big(\widehat{\Omega}(E),s\big)\simeq H^p\big(A,A^\circ\otimes \mathrm{End}(K)\big),
\end{equation*}

\noindent  under which the class $[\alpha]\in H^1\big(\widehat{\Omega}(E),s\big)$ above is mapped to the Atiyah class $[\mathrm{at}_K]\in H^1\big(A,A^\circ\otimes \mathrm{End}(K)\big)$ of the Lie algebroid representation $K$.
\end{enumerate}
\end{theorem}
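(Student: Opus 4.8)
The plan is to realize $\alpha$, the differential $s$, and the whole complex $\widehat\Omega(E)$ inside the curved complex $(\Omega^\bullet(L,\mathrm{End}(E)),[D_L,-])$, filtered by the number of legs lying in $A^\circ\cong(L/A)^*$; fix once and for all an auxiliary splitting $L=A\oplus B$ to speak of this filtration. First I would decompose the curvature $F_L:=D_L\circ D_L\in\bigoplus_m\Omega^m(L,\mathrm{End}_{2-m}(E))$ by $A^\circ$-leg count, $F_L=F_L^{(0)}+F_L^{(1)}+F_L^{(2)}+\cdots$. Since $D_L$ extends $D_A$ one has $F_L^{(0)}=D_A\circ D_A=0$, so the first obstruction to flatness of $D_L$ is the leg-one part $\alpha:=F_L^{(1)}$, and unwinding bidegrees shows $\alpha=\sum_k\alpha^{(k)}$ with $\alpha^{(k)}\in\Omega^k(A,A^\circ\otimes\mathrm{End}_{1-k}(E))$, so $\alpha$ sits in total degree $1$ of $\widehat\Omega(E)$. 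Because $A$ is a subalgebroid, the Chevalley--Eilenberg differential $d_L$, hence $D_L$, never decreases the $A^\circ$-leg count, so $[D_L,-]$ preserves the leg filtration; the operator it induces on the leg-one associated graded is the differential $s$ of the statement, and inspecting it shows it involves only $D_A$, the Chevalley--Eilenberg differential of $A$, and the flat Bott $A$-connection on $A^\circ$ — equivalently $s$ is the differential of the representation-up-to-homotopy structure on $A^\circ\otimes\mathrm{End}(E)$ built from $D_A$ and the Bott connection, and in particular does not depend on $D_L$. Finally, the graded Jacobi identity $[D_L,F_L]=0$, read in leg degree $1$ and using $F_L^{(0)}=0$, gives $s\alpha=0$.

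For the first claim, let $D_L,D_L'$ both extend $D_A$ and set $\phi:=D_L-D_L'$; this is an $\Omega(L)$-linear, degree $+1$ operator vanishing on $\wedge^\bullet A$, so $\phi=\phi^{(1)}+\phi^{(\ge 2)}$ with $\phi^{(1)}\in\widehat\Omega^0(E)$ its degree-$0$ part. From $F_L-F_{L'}=[D_L,\phi]-\phi\circ\phi$ one reads off, in leg degree $1$, that $\phi\circ\phi$ contributes nothing (leg count $\ge 2$) while $[D_L,\phi]^{(1)}$ collapses to $s\phi^{(1)}$; hence $\alpha-\alpha'=s\phi^{(1)}$ and $[\alpha]$ is well defined, which together with Proposition~\ref{propisom} handles dependence on the isomorphism class of $(E,D_A)$. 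For the second claim, recall from Definition~\ref{def:compatible} that a homotopy $A$-compatible $L$-superconnection is an extension of $D_A$ whose curvature has $A^\circ$-leg count $\ge 2$, i.e. one with $\alpha^{D_L}=0$; one implication is then immediate. Conversely, given $[\alpha]=0$ pick $\psi\in\widehat\Omega^0(E)$ with $\alpha=s\psi$, regard $\psi$ as a degree-$+1$, leg-one, $\Omega(L)$-linear operator on $\Omega(L,E)$, and set $D_L':=D_L-\psi$; this is again an extension of $D_A$, and by the previous computation its Atiyah cochain is $\alpha-s\psi=0$, so $D_L'$ is homotopy $A$-compatible.

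For the third claim I would use that $E$ is a \emph{regular} resolution of $K$: constant rank of $\partial$ turns $\ker\partial$ and $\mathrm{im}\,\partial$ into genuine subbundles, so choosing complements (partition of unity) splits $(E,\partial)$ as $K$ in degree $0$ plus an acyclic complex of vector bundles; consequently $(\mathrm{End}(E),[\partial,-])$ deformation retracts onto $(\mathrm{End}(K),0)$ with explicit contraction data $(\iota,p,h)$, and tensoring with $\wedge^\bullet A^*\otimes A^\circ$ gives a contraction of $(\Omega^\bullet(A,A^\circ\otimes\mathrm{End}(E)),[\partial,-])$ onto $(\Omega^\bullet(A,A^\circ\otimes\mathrm{End}(K)),0)$. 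Writing $s=[\partial,-]+t$ with $t$ the form-degree-raising remainder, the Homological Perturbation Lemma transfers $s$ to a differential $D$ on $\Omega^\bullet(A,A^\circ\otimes\mathrm{End}(K))$ together with a quasi-isomorphism $P\colon\widehat\Omega(E)\to(\Omega^\bullet(A,A^\circ\otimes\mathrm{End}(K)),D)$. Since $\mathrm{End}(K)$ is concentrated in degree $0$, a transferred representation-up-to-homotopy structure on it carries no operation in form degrees $\ne 1$, so $D$ is the Chevalley--Eilenberg differential of an honest flat $A$-connection on $A^\circ\otimes\mathrm{End}(K)$; comparing induced $H^0$-actions identifies that connection with $\nabla^{\mathrm{Bott}}\otimes\mathrm{ad}(\nabla^K)$, where $\nabla^K$ is the flat $A$-connection induced by $D_A$ on $K=\mathcal H^0(E,\partial)$. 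This gives the isomorphism $H^p(\widehat\Omega(E),s)\cong H^p(A,A^\circ\otimes\mathrm{End}(K))$. It then remains to follow $\alpha$ through $P$: the components $\alpha^{(k)}$ with $k\ne 1$ have nonzero internal degree and are killed by $p$ (as $\mathcal H^{\ne 0}(\mathrm{End}(E),[\partial,-])=0$), while $p(\alpha^{(1)})$, after discarding the $[\partial,-]$-exact contribution coming from $\omega_2^L$, represents the image on $\mathcal H^0(\mathrm{End}(E),[\partial,-])=\mathrm{End}(K)$ of the curvature-with-one-$A$-argument of the degree-$0$ component $\nabla^L$ of $D_L$ — a representative of the classical Atiyah class $\mathrm{at}_K$ \cite{chenAtiyahClassesHomotopy2016}; one finally checks that the perturbation corrections $p\,t\,h\,t\cdots\iota$ applied to $\alpha$ add only $D$-coboundaries, so that $P_*[\alpha]=[\mathrm{at}_K]$.

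The step I expect to be the real obstacle is precisely this last one: pinning down the image of $\alpha$ under the transferred quasi-isomorphism \emph{on the nose} — not merely exhibiting \emph{some} isomorphism of cohomologies — which amounts to controlling the whole tower of homological-perturbation corrections. The workable strategy is to take $(\iota,p,h)$ adapted to the complement chosen when splitting $E$ and to organize the bookkeeping so that, degree by degree in $\mathrm{End}(E)$, the corrections land in the image of $h$ or of $\partial$ and hence become $D$-exact after $p$; this is the BRST-type argument underlying Theorem~\ref{prop:BRST}. A secondary, routine point is that all auxiliary splittings and contraction data can be chosen globally, which is exactly where regularity of $E$ enters.
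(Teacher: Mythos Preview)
Your proposal is correct and, for the construction of $s$, of $\alpha$, and for items 1--2, follows essentially the paper's route: the paper realises $\alpha$ as the restriction of $(D_L)^2$ to $\wedge^\bullet A\otimes L/A$ and $s$ as the restriction of $[D_L,-]$, which is exactly your leg-filtration picture written intrinsically (without the auxiliary splitting $L=A\oplus B$). The genuine methodological difference is in item~3. The paper uses the spectral sequence of the form-degree filtration on $(\widehat\Omega(E),s)$: by Proposition~\ref{prop:quasiso} the first page is concentrated in internal degree $0$, so the sequence collapses at $E_2\cong H^\bullet(A,A^\circ\otimes\mathrm{End}(K))$, and the image of $[\alpha]$ is read off as the $[\partial,-]$-cohomology class $\overline{\alpha^{(1)}}$, computed directly to equal $\mathrm{at}_K$. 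This bypasses precisely the obstacle you flag: in a collapsing spectral sequence the map to $E_\infty$ is ``take the component of correct bidegree and project'', so no tower of perturbation corrections needs to be controlled. Your HPL route works too --- your degree count shows the transferred differential is $p\,s^{(1)}\iota=d^K$ on the nose, and only $\alpha^{(1)}$ (plus an $\alpha^{(0)}$-correction that is visibly $d^K$-exact) survives in $P(\alpha)$ --- but the spectral sequence packaging is the shorter path here.
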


In the third section we work out basic examples of representations up to homotopy. First we discuss the simple example of the representation up to homotopy known as \textit{double} vector bundle. The second example, the \textit{normal complex}, essentially recovers the Atiyah class of the Bott representation as the Atiyah class of the corresponding r.u.t.h.. The third example is about the r.u.t.h. of a regular Lie algebroid $A$ known as the \textit{adjoint complex}, i.e.  $E=\mathrm{Ker}(\rho|_A)[2]\oplus A[1]\oplus TM$. For both the second and third examples, we show that the corresponding classical Lie algebroid Atiyah class vanishes if and only if the Atiyah class of the r.u.t.h. $E$ vanishes. The main results of the section, applications of the isomorphism of Theorem \ref{prop:BRST}, appear in Propositions \ref{propo} and \ref{atnu} and can be summarized as follows:

\begin{theorem}\label{Theorem2} Let $A\hookrightarrow L$ be a Lie pair. 
\begin{enumerate}
\item Consider the two-term homotopy  $A$-module $(E,\partial)$,  with nonzero components $E_{-1}=A[1], E_0=L$ and where $\partial=\iota\circ sp:A[1]\to L$ is the suspension of the inclusion map $\iota:A\hookrightarrow L$. Let $[\alpha]$ be the Atiyah class of this r.u.t.h. as in Theorem \ref{Theorem1} and $[\mathrm{at}^{Bott}]$ be the Atiyah class of the Bott representation of $A$ on $L/A$. Then 
\[[\alpha]=0\;\; \text{if and only if}\;\; [\mathrm{at}^{Bott}]=0.\]
\item Let $A$ be a  regular Lie algebroid. Consider the three-term homotopy $A$-module $(E,\partial)$ with nonzero components  $E=\mathrm{Ker}(\rho|_A)[2]\oplus A[1]\oplus TM$ and where $\partial\big|_{E_{-2}}=\iota\circ sp:\mathrm{Ker}(\rho|_A)[2]\to A[1]$ and $\partial\big|_{E_{-1}}=\rho\circ sp:A[1]\to TM$.  Let $[\alpha]$ be the Atiyah class of this r.u.t.h. as in Theorem \ref{Theorem1} and $[\mathrm{at}^{\nu}]$ be the Atiyah class of the 
 Lie representation of $A$ on $\nu(A)=TM/\rho(A)$ defined by $\nabla^\nu_a(p(X))=p([\rho(a),X])$. Then 
\[[\alpha]=0\;\; \text{if and only if}\;\; [\mathrm{at}^{\nu}]=0.\] 
\end{enumerate}
\end{theorem}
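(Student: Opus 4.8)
Both parts follow formally once we recognize the two r.u.t.h.\ in question as the examples treated in Section~3 (the \emph{normal complex} and the \emph{adjoint complex}), verify that each is a non-positively graded \emph{regular} homotopy $A$-module resolving the stated vector bundle, and identify the Lie algebroid representation which its flat $A$-superconnection induces on the degree-zero cohomology. Then part~3 of Theorem~\ref{Theorem1} (equivalently Theorem~\ref{prop:BRST}) supplies a linear isomorphism between the cohomology spaces hosting the two Atiyah classes, sending $[\alpha]$ to $[\mathrm{at}_K]$; since a linear isomorphism sends $0$ to $0$, we get $[\alpha]=0$ iff the classical class vanishes.

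\textbf{Part 1.} The map $\partial=\iota\circ sp:A[1]\to L$ is fibrewise injective because $\iota:A\hookrightarrow L$ is the inclusion of a subbundle of a Lie pair; hence $\partial$ has locally constant rank and $(E,\partial)$ is a regular two-term homotopy $A$-module concentrated in degrees $-1,0$. Its cohomology is $\mathcal{H}^{-1}(E,\partial)=\ker\partial=0$ and $\mathcal{H}^{0}(E,\partial)=L/\iota(A)=L/A=:K$, so $E$ is a resolution of $K$ as required by Theorem~\ref{Theorem1}(3). It remains to check that the flat $A$-superconnection $D_A$ of this example descends on $\mathcal{H}^{0}(E,\partial)=L/A$ to the Bott $A$-connection $\nabla^{\mathrm{Bott}}_a\big(p(\ell)\big)=p\big([\iota(a),\ell]_L\big)$ (which is flat by the Jacobi identity of $L$). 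Granting this, Theorem~\ref{Theorem1}(3) yields $H^{1}\big(\widehat{\Omega}(E),s\big)\simeq H^{1}\big(A,A^\circ\otimes\mathrm{End}(L/A)\big)$ with $[\alpha]\mapsto[\mathrm{at}_{L/A}]=[\mathrm{at}^{Bott}]$, and the equivalence of the vanishings follows.

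\textbf{Part 2.} Regularity of $A$ means $\rho(A)\subseteq TM$ has locally constant rank, so $\mathrm{Ker}(\rho|_A)$ is a subbundle of $A$ and both $\iota|_{\mathrm{Ker}(\rho|_A)}$ and $\rho|_A$ have locally constant rank; hence $\partial$ is regular. One has $\partial^2=\rho\circ\iota|_{\mathrm{Ker}(\rho|_A)}=0$, so $(E,\partial)$ is a three-term complex in degrees $-2,-1,0$, with $\mathcal{H}^{-2}(E,\partial)=\mathrm{Ker}\big(\iota|_{\mathrm{Ker}(\rho|_A)}\big)=0$, $\mathcal{H}^{-1}(E,\partial)=\mathrm{Ker}(\rho|_A)/\mathrm{Ker}(\rho|_A)=0$, and $\mathcal{H}^{0}(E,\partial)=TM/\rho(A)=\nu(A)=:K$; thus the adjoint complex is a non-positively graded regular resolution of $\nu(A)$. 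As before, the substantive point is that the flat $A$-superconnection of the adjoint complex induces on $\mathcal{H}^{0}(E,\partial)=\nu(A)$ precisely the representation $\nabla^\nu_a\big(p(X)\big)=p\big([\rho(a),X]\big)$. With this identification, Theorem~\ref{Theorem1}(3) gives $H^{1}\big(\widehat{\Omega}(E),s\big)\simeq H^{1}\big(A,A^\circ\otimes\mathrm{End}(\nu(A))\big)$ with $[\alpha]\mapsto[\mathrm{at}_{\nu(A)}]=[\mathrm{at}^{\nu}]$, whence $[\alpha]=0$ iff $[\mathrm{at}^{\nu}]=0$.

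\textbf{Main obstacle.} In both parts the homological bookkeeping (regularity and the computation of $\mathcal{H}^\bullet(E,\partial)$) is routine. The real content is the identification of the induced degree-zero representation with the Bott connection, respectively with $\nabla^\nu$: this requires writing out the flat $A$-superconnection $D_A$ on the normal and adjoint complexes and tracing it through the contraction (the ``BRST'' homotopy) underlying the proof of Theorem~\ref{prop:BRST}, checking that the induced operator on $\mathcal{H}^0$ agrees with the natural $A$-connection there. Once those two computations are carried out — and they amount to the classical fact that the adjoint representation of a (regular) Lie algebroid recovers the Bott representation on the normal bundle — the proposition is purely formal.
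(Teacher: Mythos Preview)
Your approach is correct and in fact more economical than what the paper does. You reduce both parts to Corollary~\ref{cor:equivalence} by checking (i) regularity of the chain complex, (ii) concentration of $\mathcal{H}^\bullet(E,\partial)$ in degree~$0$, and (iii) that the flat $A$-superconnection induces precisely $\nabla^{Bott}$ (resp.\ $\nabla^\nu$) on $K$. Points (i)--(ii) are elementary; point (iii) is the computation you flag as the ``main obstacle'', and it is indeed carried out in the paper: the identities $p\circ\nabla^{bas}_a=\nabla^{Bott}_a\circ p$ and $\nabla^\nu_a\circ p=p\circ\nabla^{bas}_a$ are exactly Equations~\eqref{commutconnections} and~\eqref{eqcontrat}, obtained immediately from the definition of the basic connection. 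With these in hand, Equation~\eqref{eqproj} says $\nabla^K=\nabla^{Bott}$ (resp.\ $\nabla^\nu$), and you are done.

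The paper, by contrast, proves Propositions~\ref{propo} and~\ref{atnu} by a direct and rather laborious construction: given the vanishing of the classical Atiyah class, it chooses a splitting of $L=A\oplus B$ (resp.\ of the exact sequence $0\to\mathfrak{g}(A)\to A\to TM\to\nu(A)\to 0$), writes all curvatures and connection $k$-forms in block-matrix form, and explicitly builds the correction terms $\varphi^{(1)},\varphi^{(2)}$ (and $\varphi^{(3)}$ in the three-term case) that produce a homotopy $A$-compatible $L$-superconnection. This buys something your argument does not: a concrete exhibition of the compatible superconnection and an explicit primitive $\phi$ with $\alpha_E=s(\phi)$, illustrating the machinery of Theorem~\ref{main1} on the two central examples. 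Your route, however, is the conceptually cleanest one and is precisely what the paper's own introduction advertises when it calls these results ``applications of the isomorphism of Theorem~\ref{prop:BRST}''.
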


In the final section, we deal with the graded geometric aspects. The notion of Atiyah class has been extended and studied for differential graded (dg) manifolds  \cite{mehtaAtiyahClassDgvector2015} and dg Lie algebroids \cite{batakidisAtiyahClassesDgLie2018, stienonFedosovDgManifolds2020}. When the representation of the Lie algebroid $A\subset L$ is the normal bundle $L/A$, the  Atiyah class of this representation can be seen as the Atiyah class of the differential graded Lie algebroid $L[1]\oplus L/A\oplus L/A\to L[1]\oplus L/A$.
In \cite{stienonAinfinityAlgebrasLie2022, liaoAtiyahClassesTodd2023}, it was proved that given a Lie pair $A\hookrightarrow L$, one may define a dg Lie algebroid $\mathcal{L}=\pi^{!}L$ -- where $\pi:A[1]\to M$ denotes the projection map -- 
such that  the Atiyah class of a connection $\nabla:\;\Gamma_{A[1]}(\mathcal{L})\times\Gamma_{A[1]}(\mathcal{L})\to\Gamma_{A[1]}(\mathcal{L})$ in the sense of \cite{mehtaAtiyahClassDgvector2015} is equivalent to the Atiyah class of the Bott representation of the given Lie pair, so that in the regular foliation case it recovers the Atiyah-Molino class.
We extend this result to arbitrary representations up to homotopy of $A$. The main result of this section, Theorem \ref{theoremfinal}, shows that there is a quasi-isomorphism that maps the Atiyah class of an extension of a r.u.t.h. $E$ of $A$ to a $L$-superconnection on $E$ to the Atiyah class of $\nabla:\Gamma_{A[1]}(\mathcal{L})\times\Gamma_{A[1]}(\mathcal{E})\to\Gamma_{A[1]}(\mathcal{E})$ whenever $\mathcal{E}=\pi^*E\to A[1]$. This generalization of the main result of \cite{liaoAtiyahClassesTodd2023} to Atiyah classes of arbitrary r.u.t.h. of $A$ is proved using similar homological arguments for the generalized objects that we introduce in the paper (see Theorem \ref{theoremLiao}). In particular, and building on Theorem \ref{Theorem1}, we have:

\begin{theorem}\label{Theorem3}
Let $A\hookrightarrow L$ be a Lie pair, $(E,D_A)$ be a non-positively graded regular representation up to homotopy of $A$, 
whose cohomology is concentrated in degree $0$, in which case it is a resolution of $K:=\mathcal{H}^\bullet(E,\partial)=\mathcal{H}^0(E,\partial)$. 
Let $\mathcal{E}=\pi^*E$ (resp. $\mathcal{K}=\pi^\ast K$) be the pullback of $E$ (resp. $K$) over the projection $\pi: A[1]\to M$. 
\begin{enumerate}
\item  The operator $d=D_A-\partial$ is a small perturbation of the contraction  $(\Gamma_{A[1]}(\mathcal{E}), \partial, \theta)$ over $\mathbb{R}$. The perturbed contraction
\[\begin{tikzcd}
      \left(\Gamma_{A[1]}(\mathcal{K}),d_A^{\nabla^K}\right) \arrow[r, shift left=0.75ex, "\varsigma", hook] & \arrow[l, shift left=0.75ex, "\varphi", ->>] \left(\Gamma_{A[1]}(\mathcal{E}),D_A\right) \ar[loop,out=10,in=-10,distance=20, "\vartheta"]
    \end{tikzcd}\]
    forms a contraction data over $(\mathcal{C}^\infty(A[1]),d_A)$.

\item There is a quasi-isomorphism
\begin{equation*}
 \left(\widehat{\Omega}(E)^{\bullet,\bullet},s\right)\simeq \left(\Gamma_{A[1]}\big(\mathcal{L}^*\otimes\mathrm{End}(\mathcal{E})\big),\mathcal{Q}+[D_A,.\,]\right).
\end{equation*}
where $\mathcal{Q}$ is the homological vector field associated to the dg Lie algebroid $\mathcal{L}\to A[1]$,
\end{enumerate}
\end{theorem}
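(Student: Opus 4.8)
For part (1) the plan is to recognize the statement as an instance of the homological perturbation lemma. Recall from the discussion of regular resolutions in Section~\ref{sec0} that, since $(E,\partial)$ is a non-positively graded regular complex of vector bundles with $\mathcal{H}^\bullet(E,\partial)=\mathcal{H}^0(E,\partial)=K$, one may choose fibrewise complements to the images of $\partial$ and obtain a contracting homotopy $\theta$ consisting of vector bundle morphisms; pulling back along $\pi:A[1]\to M$ and extending $\mathcal{C}^\infty(A[1])$-linearly yields a contraction of $\mathcal{C}^\infty(A[1])$-modules with inclusion $\varsigma_0:\Gamma_{A[1]}(\mathcal{K})\hookrightarrow\Gamma_{A[1]}(\mathcal{E})$, projection $\varphi_0:\Gamma_{A[1]}(\mathcal{E})\twoheadrightarrow\Gamma_{A[1]}(\mathcal{K})$, homotopy $\theta$, and side conditions $\theta^2=0$, $\theta\varsigma_0=0$, $\varphi_0\theta=0$ (in particular this is a contraction over $\mathbb{R}$). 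Put $d:=D_A-\partial$. Flatness of $D_A$ gives $0=D_A^2=\partial^2+\partial d+d\partial+d^2$, and $\partial^2=0$, so $d$ is a perturbation of $\partial$. Writing the $A$-superconnection in components as $D_A=\partial+\nabla+\sum_{i\geq2}\omega_i$ with $\nabla\in\Omega^1(A,\mathrm{End}_0(E))$ and $\omega_i\in\Omega^i(A,\mathrm{End}_{1-i}(E))$, we see that $d$ strictly raises the $\Lambda^\bullet A^*$-degree while $\theta$ preserves it; hence $\theta d$ and $d\theta$ are nilpotent and $\mathrm{id}-\theta d$, $\mathrm{id}-d\theta$ are invertible with inverses given by finite geometric series, i.e.\ $d$ is a \emph{small} perturbation. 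The homological perturbation lemma then produces the perturbed contraction of the statement, with $\varphi=\varphi_0(\mathrm{id}-d\theta)^{-1}$, $\varsigma=(\mathrm{id}-\theta d)^{-1}\varsigma_0$, $\vartheta=\theta(\mathrm{id}-d\theta)^{-1}$, perturbed big differential $\partial+d=D_A$, and perturbed small differential $\varphi_0\,d\,(\mathrm{id}-\theta d)^{-1}\varsigma_0$.

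Two points then complete part (1). First, using $\theta^2=0$, $\theta\varsigma_0=0$, $\varphi_0\theta=0$ together with invertibility of $\mathrm{id}-\theta d$, one checks that $\varsigma$, $\varphi$ and $\vartheta$ are all $\mathcal{C}^\infty(A[1])$-linear — this is exactly where the side conditions are used — and that $D_A$ and the perturbed small differential are degree~$+1$ derivations over $d_A$; hence the perturbed data is a contraction in the category of dg modules over $(\mathcal{C}^\infty(A[1]),d_A)$. Second, the perturbed small differential is forced to equal $d_A^{\nabla^K}$: being a degree~$+1$ derivation over $d_A$ on $\Omega^\bullet(A,K)$, it decomposes as a sum over $i\geq1$ of operators raising $\Lambda^\bullet A^*$-degree by $i$, the $i$-th one governed by a section of $\Lambda^iA^*\otimes\mathrm{End}_{1-i}(K)$; since $K$ sits in degree~$0$ only the term $i=1$ survives, and it is precisely the connection $[s]\mapsto[\nabla s]$ induced by $D_A$ on $K=\mathcal{H}^0(E,\partial)$, i.e.\ the representation $\nabla^K$ whose Atiyah class is $[\mathrm{at}_K]$.

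For part (2) the plan is to compare the complex $\big(\Gamma_{A[1]}(\mathcal{L}^*\otimes\mathrm{End}(\mathcal{E})),\mathcal{Q}+[D_A,\cdot\,]\big)$ with $\big(\widehat{\Omega}(E)^{\bullet,\bullet},s\big)$ by passing through the $K$-version of each. On one side, part~(3) of Theorem~\ref{Theorem1} (i.e.\ Theorem~\ref{prop:BRST}) gives a quasi-isomorphism $\big(\widehat{\Omega}(E),s\big)\simeq\big(\Omega^\bullet(A,A^\circ\otimes\mathrm{End}(K)),d_A\big)$. On the other side, the contraction of part~(1), being over $(\mathcal{C}^\infty(A[1]),d_A)$ and built from vector bundle morphisms, propagates through the functorial operations of dualization and tensor product to a contraction $\big(\Gamma_{A[1]}(\mathrm{End}(\mathcal{E})),[D_A,\cdot\,]\big)\leftrightarrows\big(\Gamma_{A[1]}(\mathrm{End}(\mathcal{K})),[\nabla^K,\cdot\,]\big)$ given by $\Phi\mapsto\varphi\Phi\varsigma$ and $\psi\mapsto\varsigma\psi\varphi$; tensoring over $\mathcal{C}^\infty(A[1])$ with $\mathcal{L}^*$ and using that the homological vector field $\mathcal{Q}$ of $\mathcal{L}=\pi^!L$ acts through the $\mathcal{C}^\infty(A[1])$- and $\mathcal{L}^*$-factors, on which the contraction maps are $\mathcal{C}^\infty(A[1])$-linear, upgrades this to a quasi-isomorphism $\big(\Gamma_{A[1]}(\mathcal{L}^*\otimes\mathrm{End}(\mathcal{E})),\mathcal{Q}+[D_A,\cdot\,]\big)\simeq\big(\Gamma_{A[1]}(\mathcal{L}^*\otimes\mathrm{End}(\mathcal{K})),\mathcal{Q}+[\nabla^K,\cdot\,]\big)$. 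Finally the right-hand side is identified with $\big(\Omega^\bullet(A,A^\circ\otimes\mathrm{End}(K)),d_A\big)$ by the theorem of \cite{liaoAtiyahClassesTodd2023,stienonAinfinityAlgebrasLie2022} relating the Atiyah complex of a Lie pair to that of the dg Lie algebroid $\pi^!L$; this is the classical input, re-proved in the generality needed here as Theorem~\ref{theoremLiao}. Composing the three equivalences gives the asserted quasi-isomorphism. (Alternatively, and this is the route of Theorem~\ref{theoremfinal}, one may generalize Liao's homological argument directly to the bigraded objects of this paper, bypassing the reduction to $K$.)

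I expect the main difficulty to lie in part~(2), in the bookkeeping that matches the \emph{differentials} across these identifications: one must verify that the homological vector field $\mathcal{Q}$ of $\pi^!L$ genuinely encodes the same \emph{$L$-directional} information as the differential $s$ of Theorem~\ref{Theorem1} (which is built from an arbitrary extension $D_L$ of $D_A$), and that this matching is compatible with the passage to $\mathrm{End}(\mathcal{E})$ and with the perturbation transfer of part~(1). Concretely this amounts to writing the homological vector field and bracket of $\mathcal{L}=\pi^!L$ in a local frame, recognizing the resulting Atiyah cocycle of $(\mathcal{E},\nabla)$ as the class $[\alpha]$ of Theorem~\ref{Theorem1}, and checking that the quasi-isomorphism is insensitive to the auxiliary choices ($\theta$, the complements, $D_L$) — which is exactly what Theorem~\ref{theoremLiao} packages. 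Part~(1), by contrast, is routine once the side-condition computations giving $\mathcal{C}^\infty(A[1])$-linearity of $\varsigma,\varphi,\vartheta$ are carried out.
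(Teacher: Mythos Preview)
Your proposal is correct and follows essentially the same route as the paper. Part~(1) is exactly the paper's Theorem~\ref{theoremLiao}: homological perturbation applied to the pulled-back contraction $(\sigma,\phi,\theta)$, smallness via the filtration by $A$-form degree, $\mathcal{C}^\infty(A[1])$-linearity of $\varsigma,\varphi,\vartheta$ from the side conditions, and identification of the transferred differential with $d_A^{\nabla^K}$ (the paper does this by the direct computation $\delta_d=\phi D_A\sigma=\phi\,d_A^{\nabla}\sigma=d_A^{\nabla^K}$ rather than your degree-count, but either works). Part~(2) is the paper's Theorem~\ref{theoremfinal}: compose Theorem~\ref{prop:BRST} with the contraction obtained by propagating part~(1) to $\mathcal{L}^*\otimes\mathrm{End}(-)$.

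The only organizational difference is in how the second contraction is assembled. The paper invokes \cite[Proposition~A.9]{liaoAtiyahClassesTodd2023} once to pass directly from $\big(\Gamma_{A[1]}(\mathcal{L}^*\otimes\mathrm{End}(\mathcal{E})),\mathcal{Q}+[D_A,\cdot]\big)$ to $\big(\Gamma_{A[1]}(\pi^*(A^\circ)\otimes\mathrm{End}(\mathcal{K})),d^K\big)$ (Corollary~\ref{corollarymel}), combining the $\mathcal{L}\leadsto L/A$ contraction with the $\mathcal{E}\leadsto\mathcal{K}$ contraction in one step; you factor it as $\mathrm{End}(\mathcal{E})\leadsto\mathrm{End}(\mathcal{K})$ first, then $\mathcal{L}^*\leadsto\pi^*(A^\circ)$ via Liao's original result. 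Both are fine. Your closing ``expected difficulty'' paragraph overstates what is needed: there is no direct local-frame matching of $\mathcal{Q}$ with $s$ in the proof---everything is mediated through $K$, and the only place the $\mathcal{L}$-side is touched is the normal-complex instance of the contraction (Lemma~\ref{prop:Liao}/Liao), which is classical input rather than a new verification.
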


\textit{Notation:} Throughout this paper, the base manifold (body) is a smooth manifold $M$. A Lie pair will always be denoted as $A\hookrightarrow L$, so that $A$ is a Lie subalgebroid of $L$. Graded vector bundles, and more specifically representations up to homotopy of $A$, are denoted by $E$. 
A \emph{(split) graded vector bundle} over  $M$ is a family $E=(E_i)_{i\in\mathbb{Z}}$ of vector bundles over $M$, labelled over $\mathbb{Z}$. We will assume that the label $i$ -- that we call the \emph{degree of $E_i$} -- is bounded above and below. Following \cite{kotovCategoryGradedManifolds2024}, we say that such a graded vector bundle $E$ is of \emph{finite dimension}, i.e. that all but a finite number of $E_i$ are zero, and those which are not, are finite dimensional. The main consequence of this assumption is that tensor product stabilizes the class of split finite dimensional graded vector bundles over $M$, and that the graded vector bundle $\mathrm{End}(E)$ of endomorphisms of $E$ is of finite dimension as well.  In this paper we always consider (split) graded vector bundles of finite dimension.

\textit{Acknowledgments:} Thanks to Dimitris Makris, Ping Xu, Hsuan-Yi Liao, Camille Laurent-Gengoux and Thomas Basile for fruitful discussions. Thanks to Athanasios Chatzistavrakidis and to Anastasios Fotiadis for remarks on the preliminary version. The second author wants to deeply thank the Department of Mathematics of the Aristotle University of Thessaloniki at which most of this work has been done, for its warmth welcome. This work has been supported by the Croatian Science Foundation project IP-2019-04-4168
“Symmetries for Quantum Gravity”.

\section{Atiyah classes of representations up to homotopy of Lie algebroid pairs}\label{sec0}

\subsection{Superconnections and Lie algebroid homotopy modules}\label{sec1}
Let $M$ be a smooth manifold, $(A,\rho,[\cdot,\cdot])$ be a Lie algebroid over $M$. 
The sheaf $\Omega(A)$ of forms on $A$ is equipped with the Lie algebroid differential defined by the formula
\begin{align*}
    d_A(u)(a_0,\ldots,a_p)&=\sum_{i=0}^p\,(-1)^{i}\rho(a_i)\big(u(a_0,\ldots,\widehat{a_i},\ldots,a_p)\big)\\ 
    &\hspace{1cm}+\sum_{0\leq i,j\leq p}\,(-1)^{i+j}u\big([a_i,a_j],a_0,\ldots,\widehat{a_i},\ldots,\widehat{a_j},\ldots,a_p\big),
\end{align*}
for $u\in\Omega^p(A)$ and $a_0,\ldots,a_p\in\Gamma(A)$.  In other words, the Lie algebroid differential is the pullback through $\rho^*$ of the de Rham derivative, $d_A=\rho^*d_{\mathrm{dR}}$.

 Let $E$ be a (split, finite dimensional) graded vector bundle over $M$, that is to say: a family of vector bundles $(E_i)_{i\in\mathbb{Z}}$ indexed over $\mathbb{Z}$, such that all but a finite number of $E_i$ are zero, and those which are not are finite dimensional. We say that $E$ is positively (resp. negatively) graded if the index $i$ is concentrated in positive (resp. negative) integers. Let us define $(m,M)$ to be the unique pair of integers satisfying the two conditions:\begin{itemize}
     \item $E_m\neq0$, and $E_{i}=0$ for any $i\leq m-1$;
     \item $E_M\neq0$, and $E_{i}=0$ for any $i\geq M+1$.
 \end{itemize}
 In particular, $m\leq M$, and the positive integer $M_E=M-m+1$ is called the \emph{amplitude of $E$}.
Let $\mathrm{End}_i(E)$ be the vector bundle of endomorphisms of $E$ of degree $i$ -- i.e. those (degree 0) vector bundle morphisms $E_\bullet\rightarrow E_{\bullet+i}$. 
 Denote by $\mathrm{End}_{\bullet}(E)=\bigoplus_{j\in\mathbb{Z}}\mathrm{End}_j(E)$ the graded vector bundle of graded endomorphisms of $E$. It is well defined as we have assumed $E$ to be of finite dimension. The direct sum is actually bounded above by $M_E-1$ and below by~$-M_E+1$.
 
The sheaf $\Omega(A,E)$ of differential forms on $A$ valued in the graded vector bundle $E$ is bigraded: one grading is by form degree (or \textit{arity}) on $A$ and the other is by the total degree of the form, i.e. a $k$-form $\omega$ on $A$ valued in $E_{i}$ is said to be of total degree $t=k+i$, and we write $\omega\in\Omega^k(A,E)_t= \Omega^k(A,E_{t-k})$. The total degree is bounded above and below since  $A$ is of finite rank and $E$ is of finite dimension.  We denote by $\Omega^\bullet(A,E)_\blacktriangle$ the following direct sum of sheaves:
\begin{equation*}
    \Omega^\bullet(A,E)_\blacktriangle=\bigoplus_{t=-\infty}^\infty\bigoplus_{k=0}^{\mathrm{rk}(A)}\Omega^k(A,E)_t.
\end{equation*}
The sum is finite as we assumed that the graded vector bundle is of finite dimension. Indeed, the lower bound on $t$ is $m$, while the upper bound is $\mathrm{rk}(A)+M$.
When we want to emphasize the role of the total degree $t$, we omit the $\bullet$ exponent and only write $\Omega(A,E)_\blacktriangle$. 
The Lie algebroid differential $d_A$ can be straightfowardly extended to $\Omega(A,E)$ or $\Omega(A,\mathrm{End}(E))$, by acting on the $A$-form part only. 

\begin{remark}\label{remarksevere}
Let  $\eta\in\Omega(A,\mathrm{End}(E))$ be of total degree $t_\eta$. By definition of the amplitude of the (split, finite dimensional) graded vector bundle $E$, the total degree of $\eta$ is bounded above and below: $-M_E+1\leq t_\eta\leq \mathrm{rk}(A)+M_E-1$.
Then $\eta$
splits into a finite sum \begin{equation}\label{eq:decompositionD0}
\eta=\sum_{k=0}^{\mathrm{rk}(A)}\eta^{(k)},\end{equation}
where each component $
\eta^{(k)}\colon\Omega^\bullet(A,E)_{\blacktriangle}\to \Omega^{\bullet+k}(A,E)_{\blacktriangle+t_\eta}$  is a $k$-form on $A$ valued in $\mathrm{End(E)}_{t_\eta-k}$. 
We see that whenever $t_\eta=-M_E+1$ only the component $\eta^{(0)}$ contributes to the sum, while whenever $t_\eta=\mathrm{rk}(A)+M_E-1$ only the component $\eta^{(\mathrm{rk}(A))}$ does.

Moreover, if the amplitude of $E$ is higher than $1$,  then whenever $t_\eta=0$ (resp. $t_\eta=1$), the sum \eqref{eq:decompositionD0} is bounded above by $\mathrm{min}(\mathrm{rk}(A),M_E-1)$ (resp. $\mathrm{min}(\mathrm{rk}(A),M_E)$).
In both cases, we can however adopt the convention that the sum ranges from 0 to $\mathrm{rk}(A)$, keeping in mind that  that some of its components identically vanish if $\mathrm{rk(A)}\geq M_E$ (resp. $\mathrm{rk(A)}\geq M_E+1$) -- see the examples in Section \ref{sectionExamples} where this phenomenon occurs.
\end{remark}

Let $E=E_0$ be a vector bundle over $M$. A (linear) $A$-connection on $E$ is an $\mathbb{R}$-bilinear map $\nabla:\;\Gamma(A)\times\Gamma(E)\to\Gamma(E)$ such that $\nabla_{fa}e=f\nabla_ae$ and $\nabla_afe=f\nabla_ae+\rho(a)(f)e$, for every smooth function $f\in\mathcal{C}^\infty(M)$. 
The $A$-connection $\nabla$ on $E$  can be equivalently described by an exterior covariant derivative $d^\nabla_A:\Omega^\bullet(A,E)\to\Omega^{\bullet+1}(A,E)$ such that
\begin{equation}\label{eq:covariantderive}
d^\nabla_A(\eta\otimes e)=d_A(\eta)\otimes e+(-1)^{m}\eta\otimes \nabla(e),
\end{equation}
for every $\eta\in\Omega^m(A)$ and $e\in\Gamma(E)$. It extends to an $\Omega(A)$-derivation on $\Omega(A,E)$ by the graded Leibniz rule
\begin{equation*}
d^\nabla_A(\eta\wedge u)=d_A(\eta)\wedge u+(-1)^{m}\eta\wedge d^\nabla_A(u),
\end{equation*}
for every $\eta\in\Omega(A),\;u\in\Omega(A,E)$, and in local sections is defined by the formula 
\begin{align*}
    d^\nabla_A(u)(a_0,\ldots,a_p)&=\sum_{i=0}^p\,(-1)^{i}\nabla_{a_i}\big( u(a_0,\ldots,\widehat{a_i},\ldots,a_p)\big)\\
    &\hspace{1cm}+\sum_{0\leq i,j\leq p}\,(-1)^{i+j} u\big([a_i,a_j],a_0,\ldots,\widehat{a_i},\ldots,\widehat{a_j},\ldots,a_p\big).
\end{align*}

The curvature associated to the connection $\nabla$ is a $\mathrm{End}(E)$-valued 2-form on $A$, defined by the usual formula: $R_\nabla(a,b)=[\nabla_a,\nabla_b]-\nabla_{[a,b]}$. The connection $\nabla$ is \emph{flat} when the associated curvature $R_{\nabla}$ vanishes, or, equivalently, when $(d^{\nabla}_A)^2=0$. In this case, $d^\nabla_A$ defines a chain complex structure on $\Omega^\bullet(A,E)$, whose associated (Chevalley-Eilenberg) cohomology is denoted by $H^\bullet(A,E)$, and $E$ is said to be a \emph{Lie algebroid representation of~$A$}, or an \emph{$A$-module}.

Flat Lie algebroid connections on vector bundles $E=E_0$ characterize representations of Lie algebroids. Passing to the graded context, flatness of a Lie algebroid connection makes sense only \emph{up to homotopy}, justifying the introduction of the following notion. 

\begin{definition} \label{defquillen}\cite{quillenSuperconnectionsChernCharacter1985}
    Let $A$ be a Lie algebroid over $M$ and $E=(E_i)_{i\in\mathbb{Z}}$ be a (split, finite dimensional) graded vector bundle over $M$. We call    \emph{$A$-superconnection}  any differential operator  $D_A:\Omega(A,E)_\blacktriangle\to\Omega(A,E)_{\blacktriangle+1}$ satisfying  the Leibniz rule
\begin{equation*}
D_A(\eta\otimes u)=d_A(\eta)\otimes u+(-1)^{m}\eta\otimes D_A(u),
\end{equation*}
for every $\eta\in\Omega(A),\;u\in\Omega(A,E)$.
\end{definition}

Any $A$-superconnection $D_A$ splits into a finite sum 
 \begin{equation}\label{eq:decompositionD}
D_A=\sum_{k=0}^{\mathrm{rk}(A)}D_A^{(k)},\end{equation}
where we used the convention on the sum introduced in Remark \ref{remarksevere}.
 For $k=1$, the operator $D_A^{(1)}$ is the exterior covariant derivative~$d_A^\nabla$ associated to some $A$-connection~$\nabla$. 
   Given a choice of open covering $(U_\alpha)_\alpha$ of $M$ trivializing both $A$ and $E$ (this is possible since $E$ is split of finite dimension), there exists a family~$(\omega^{(1)}_\alpha)_{\alpha}$ of locally defined 1-forms $\omega^{(1)}_\alpha\in\Omega^1\big(A\big|_{U_\alpha},\mathrm{End}_0(E)\big|_{U_\alpha}\big)$  
such that on the overlap $U_\alpha\cap U_\beta$ they satisfiy the conditions
\begin{equation}\label{eq:transitionmap}
\omega^{(1)}_\beta=g_{\alpha\beta}^{-1}d_Ag_{\alpha\beta}+g_{\alpha\beta}^{-1}\omega^{(1)}_\alpha g_{\alpha\beta}.
\end{equation}
Here $g_{\alpha\beta}$ represents the transition map of $E$ over $U_\alpha\cap U_\beta$. 
Such a family  of locally defined 1-forms $(\omega^{(1)}_\alpha)_{\alpha}$ satisfying Equation \eqref{eq:transitionmap} on overlaps is abusively called a \emph{connection 1-form} and is denoted by $\omega^{(1)}$ in the rest of the text. 

 For every $k\neq1$, the restriction of the action of $D_A^{(k)}$ on $\Omega^0(A,E)=\Gamma(E_\bullet)$ canonically induces an $\mathrm{End}_{1-k}(E)$-valued $k$-form $\omega_A^{(k)}$ on $A$:
  \begin{equation*}
D_A^{(k)}\colon\Gamma(E_\bullet)\longrightarrow \Omega^k(A,E_{\bullet+1-k})\qquad\longleftrightarrow \qquad \omega_A^{(k)}\in \Omega^k\big(A,\mathrm{End}_{1-k}(E)\big),
  \end{equation*}
  by the relation $D_A^{(k)}(e)(a_1\wedge\cdots\wedge a_k)=\omega_A^{(k)}(a_1\wedge\cdots\wedge a_k)(e)$.
The correspondence between~$D_A^{(k)}$ and $\omega_A^{(k)}$ is described in the following equation:
 \begin{equation}
D_A^{(k)}(u)=\omega_A^{(k)}\wedge u, \label{eqokkk} 
\end{equation}
for every $u\in\Omega(A,E)$. Here, the wedge product means that we apply a concatenation on the $A$-form part of $u$, and that $\omega^{(k)}_A$ acts as an endomorphism on the part of $u$ sitting in~$E$. Namely, for $\eta\otimes \alpha\in\Omega^p(A,\mathrm{End}_i(E))$ and $u=\omega\otimes e\in\Omega^q(A,E)$, one has:
\begin{equation*}
(\eta\otimes \alpha)\wedge(\omega\otimes e)=(-1)^{iq}(\eta\wedge\omega)\otimes \alpha(e).
\end{equation*}
In particular, the 0-form $\omega_A^{(0)}\in\Gamma\big(\mathrm{End}_{1}(E)\big)$ corresponds to a degree $+1$ vector bundle endomorphism of~$E$ that we denote $\partial:E_{\bullet}\to E_{\bullet+1}$.
For $2\leq k\leq M(A,E)$, 
we call \emph{connection $k$-form} the $\mathrm{End}(E)$-valued $k$-form $\omega_A^{(k)}$. 

This discussion establishes that an $A$-superconnection $D_A$ can be symbolically written
 \begin{equation}\label{eq:defD}
     D_A=\partial+d^\nabla_A + \sum_{k\geq 2}^{\mathrm{rk}(A)}\omega_A^{(k)}\wedge\cdot.
 \end{equation}
where we used the convention on the sum introduced in Remark \ref{remarksevere}.
 In analogy with Lie algebroid representations, whenever $D_A$ squares to zero, the graded vector bundle $E$ is said to be a \emph{representation up to homotopy of~$A$}.

\begin{definition}\cite{abadRepresentationsHomotopyLie2011}\label{repuptohom} Let $A$ be a Lie algebroid over a smooth manifold~$M$.
A \emph{representation up to homotopy of $A$} -- or \emph{homotopy $A$-module} -- is a (split, finite dimensional) graded vector bundle $E$ over $M$ with a vector bundle morphism $\partial:E_\bullet\to E_{\bullet+1}$ forming a chain complex $(E_\bullet,\partial)$, together with either one of the following equivalent sets of data:
\begin{enumerate}
    \item 
an $A$-superconnection $D_A$ on $E$ such that $D_A^{(0)}=\partial$ and $(D_A)^2=0$;
\item an $A$-connection $\nabla$ on $E$, together with a family of forms $\big(\omega_A^{(k)}\big)_{k\geq2}\in\Omega^k(A,\mathrm{End}(E))_1$ of total degree $+1$ such that \begin{align}
[d_A^\nabla,\partial]&=0,\label{eqnablapartial}\\
R_A^{(k)}+\big[\partial,\omega_A^{(k)}\big]&=0\hspace{1cm}\text{for every $k\geq2$},\label{eqimpot}
\end{align}
where $R_A^{(k)}$ is the \emph{curvature $k$-form} associated to the connection $k-1$-form $\omega_A^{(k-1)}$: 
\begin{equation*}
R_A^{(k)}=d_A\omega_A^{(k-1)}+\underset{s+t=k}{\sum_{1\leq s, t\leq k-1}}\ \omega_A^{(s)}\wedge\omega_A^{(t)}.
\end{equation*}
\end{enumerate}
\end{definition}

\begin{remark}\label{remark1}
    The bracket in Equations \eqref{eqnablapartial} and \eqref{eqimpot} is the bracket of forms on $A$ taking values in $\mathrm{End}(E)$, namely the commutator on the space $\Omega(A,\mathrm{End}(E))$. More precisely, for every homogeneous elements $\omega\in\Omega(A,\mathrm{End}(E))_m$ and $\omega'\in\Omega(A,\mathrm{End}(E))_n$, it is
 \begin{equation}\label{eq:commutatorEnd}
     [\omega,\omega']=\omega\wedge \omega'- (-1)^{mn}\omega'\wedge \omega.
 \end{equation} 
 The wedge symbol means that we take the wedge product on the form part and the endomorphism composition on the $\mathrm{End}(E)$ part, respectively.  
 In particular, for sections $\alpha\in \Gamma(\mathrm{End}_i(E))$\ and $\beta\in\Gamma(\mathrm{End}_j(E))$, we have
\begin{equation*}
[\alpha,\beta]=\alpha\circ\beta-(-1)^{ij}\beta\circ\alpha,
\end{equation*}
while for $\eta\otimes \alpha\in\Omega^p(A,\mathrm{End}_i(E))$ and $\omega\otimes \beta\in\Omega^q(A,\mathrm{End}_j(E))$, we have
\begin{equation*}
[\eta\otimes \alpha,\omega\otimes \beta]=(-1)^{iq}(\eta\wedge\omega)\otimes [\alpha,\beta].
\end{equation*} 
 Since $\partial, d^\nabla_A$ and $\omega_A^{(k)}$ are of total degree $+1$, any commutator \eqref{eq:commutatorEnd} between them reduces to an ordinary sum. Eventually, notice that  evaluating Equation \eqref{eqnablapartial} on a section $a\in \Gamma(A)$ gives $\nabla_a\circ\partial=\partial\circ\nabla_a$.
\end{remark}

Following \cite{abadRepresentationsHomotopyLie2011}, a \emph{morphism} between two homotopy $A$-modules $(E,\partial_E)$ and $(F,\partial_F)$ is a $\Omega(A)$-linear map 
$\Phi:\Omega(A,E)_\blacktriangle\to\Omega(A,F)_\blacktriangle$
of total degree 0 
commuting with the $A$-superconnections $D_E$ and $D_F$. Namely, $\Phi$ is a morphism of differential graded $\mathcal{C}^\infty(A[1])$-modules.  Such a morphism consists of a family of linear maps $\Phi^{(i)}\in\Omega^{i}(A,\mathrm{End}_{-i}(E,F))$, and in particular the element $\Phi^{(0)}\in\Gamma(\mathrm{Hom}_0(E,F))$ is a chain map.
Whenever there exists a morphism of homotopy $A$-modules $\Psi:\Omega(A,F)_\blacktriangle\to\Omega(A,E)_\blacktriangle$ such that $\Phi\circ\Psi=\mathrm{id}_{\Omega(A,F)}$ and $\Psi\circ\Phi=\mathrm{id}_{\Omega(A,E)}$, we say that $E$ and $F$ are \emph{isomorphic $A$-modules}. 
Decomposing these two identities, one observes that $\Phi^{(0)}\circ\Psi^{(0)}=\mathrm{id}_{F}$ and $\Psi^{(0)}\circ\Phi^{(0)}=\mathrm{id}_{E}$ so that $E$ and $F$ are isomorphic as graded vector bundles.

\subsection{Extension of superconnections and the bigraded complex  
\texorpdfstring{$A^\circ\otimes\mathrm{End}(E)$}{(L/A)*⊗End(E)}}\label{seq:extension}

 A \emph{Lie algebroid pair}, or \emph{Lie pair}, is a pair of Lie algebroids $L\to M$ and $A\to M$ together with a Lie algebroid inclusion $A\hookrightarrow L$ covering the identity on $M$. The following Lemma generalizes Lemma 2.1 of \cite{chenAtiyahClassesHomotopy2016} to the present context.

\begin{lemma}\label{lem:extension}
Let $A\hookrightarrow L$ be a Lie pair, and $D_A$ be an $A$-superconnection on a (split, finite dimensional) graded vector bundle $E$. Then $D_A$ extends to an $L$-superconnection $D_L$ on~$E$. Moreover, if $D_L$ and $D_L'$ are two such extensions, then $D_L-D_L'\in\Gamma(\wedge^\bullet A^\circ\otimes \mathrm{End}(E))$, where $A^\circ\subset L^*$ denotes the annihilator bundle of $A$.
\end{lemma}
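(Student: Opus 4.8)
The plan is to prove the two assertions separately, handling existence of an extension first and then the characterization of the difference of two extensions.

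\textbf{Existence.} First I would trivialize everything locally. Choose an open cover $(U_\alpha)_\alpha$ of $M$ over which $A$, $L$ and all the $E_i$ are trivial, together with a subordinate partition of unity $(\chi_\alpha)_\alpha$. The key structural fact is the splitting $D_A = \sum_{k=0}^{\mathrm{rk}(A)} D_A^{(k)}$ from Equation \eqref{eq:decompositionD}, with $D_A^{(0)}=\partial$ an honest bundle endomorphism (so it needs no extension) and each $D_A^{(k)}$ for $k\geq 1$ governed by a connection $k$-form $\omega_A^{(k)}\in\Omega^k(A,\mathrm{End}_{1-k}(E))$. Over each $U_\alpha$ I would extend each $\omega_A^{(k)}$ to a form $\tilde\omega_{L,\alpha}^{(k)}\in\Omega^k(L|_{U_\alpha},\mathrm{End}_{1-k}(E)|_{U_\alpha})$ simply by choosing a local splitting $L|_{U_\alpha}\cong A|_{U_\alpha}\oplus (L/A)|_{U_\alpha}$ and declaring the extension to vanish on the complement (for $k=1$ one must be slightly more careful and follow the standard connection-extension argument, using $\Gamma(L)=\Gamma(A)\oplus\Gamma(C)$ for a complement $C$ and setting $\nabla^L_{c}$ to be, say, the trivial flat connection in the local frame, so that the Leibniz rule is respected). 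Then patch with the partition of unity: set $\omega_L^{(k)} = \sum_\alpha \chi_\alpha\,\tilde\omega_{L,\alpha}^{(k)}$. For $k\neq 1$ this convex combination is again a genuine $\mathrm{End}(E)$-valued $k$-form on $L$ restricting to $\omega_A^{(k)}$, because the restriction map is linear and $\sum\chi_\alpha=1$; for $k=1$ the affine nature of the space of connections makes the same averaging produce an $L$-connection $\nabla^L$ restricting to $\nabla$. Assembling, $D_L := \partial + d_L^{\nabla^L} + \sum_{k\geq 2}\omega_L^{(k)}\wedge\,\cdot$ is an $L$-superconnection by construction (it satisfies the Leibniz rule of Definition \ref{defquillen} over $L$ since each summand does), and it restricts to $D_A$ on $\Omega(A,E)_\blacktriangle\subset\Omega(L,E)_\blacktriangle$. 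Note we make no flatness claim here, which is exactly right: the obstruction to a \emph{compatible} (flat-preserving) extension is precisely the Atiyah class of Theorem \ref{Theorem1}.

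\textbf{The difference.} Suppose $D_L$ and $D_L'$ both extend $D_A$. Their difference $\delta := D_L - D_L'$ is $\mathcal{C}^\infty(M)$-linear: from the Leibniz rule over $L$, $D_L(fu) - D_L'(fu) = (d_L f)\otimes u + f D_L u - (d_L f)\otimes u - f D_L' u = f\,\delta(u)$ for $f\in\mathcal C^\infty(M)$, and more generally $\delta$ is $\Omega(L)$-linear of total degree $+1$, hence is given by a tensor $\delta\in\Gamma(\wedge^\bullet L^*\otimes\mathrm{End}(E))$ (decomposing by form-arity as in Remark \ref{remarksevere}). The fact that both extend the \emph{same} $D_A$ means $\delta$ vanishes whenever all its arguments lie in $\Gamma(A)\subset\Gamma(L)$; equivalently, contracting $\delta$ with any section of $A$ in any slot gives zero. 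A multilinear-algebra lemma (the annihilator of $A$ inside $\wedge^\bullet L^*$ consists exactly of forms killed by all sections of $A$) then identifies the space of such tensors with $\Gamma(\wedge^\bullet A^\circ\otimes\mathrm{End}(E))$, which is the claim.

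\textbf{Main obstacle.} The genuinely delicate point is the $k=1$ (connection) component in the existence half: unlike the higher $\omega_A^{(k)}$, which are tensorial and extend by trivial linear algebra plus a partition of unity, an $A$-connection is only affine over $\mathcal C^\infty(M)$, so one must verify that the patched object still satisfies the derivation property of Definition \ref{defquillen} rather than merely restricting correctly — this is the usual ``connections exist'' argument and I would invoke it essentially verbatim. Everything else — the splitting \eqref{eq:decompositionD}, the averaging, and the $\mathcal C^\infty(M)$-linearity computation for the difference — is routine, mirroring the proof of Lemma 2.1 in \cite{chenAtiyahClassesHomotopy2016} componentwise in the arity $k$.
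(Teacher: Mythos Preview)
Your existence argument is correct and more explicit than the paper's, which simply asserts (with a reference) that one may choose $D_L$ making the restriction square $\iota^*\circ D_L=D_A\circ\iota^*$ commute. Your component-by-component construction via local splittings and a partition of unity is precisely what underlies that assertion, and your care with the affine $k=1$ case is appropriate.

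The difference argument has a genuine gap. From ``$D_L,D_L'$ both extend $D_A$'' you correctly deduce that each tensorial component $\delta^{(k)}$ vanishes when \emph{all} $k$ arguments lie in $A$. The ``equivalently'' that follows is false: vanishing on $\wedge^k A$ is strictly weaker than being annihilated by contraction with every section of $A$. For instance, with $\mathrm{rk}(A)=1$, $\mathrm{rk}(L)=2$ and adapted frame $e_1\in A$, $e_2\notin A$, one has $\wedge^2 A=0$, so $e^1\wedge e^2$ trivially restricts to zero there, yet $\iota_{e_1}(e^1\wedge e^2)=e^2\neq 0$ and $e^1\wedge e^2\notin\wedge^2 A^\circ=0$. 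The multilinear lemma you invoke (forms killed by every $\iota_a$ are exactly $\wedge^\bullet A^\circ$) is true, but its hypothesis is not what you have established. What the restriction condition actually gives is that $\delta^{(k)}$ lies in the kernel of $\wedge^k L^*\to\wedge^k A^*$, namely the ideal $A^\circ\wedge\wedge^{k-1}L^*$, which strictly contains $\wedge^k A^\circ$ for $k\geq 2$. In fairness, the lemma's stated conclusion and the paper's one-line proof (``lands in $\Omega(L/A,E)$'') share this imprecision; in the sequel (see the proofs of Proposition~\ref{proprestriction} and Theorem~\ref{main1}) only the weaker fact $\delta^{(k)}|_{\wedge^k A}=0$ is ever invoked, so your first deduction is exactly what is needed---just drop the false equivalence.
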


\begin{proof}
Let $\iota:\;A\hookrightarrow L$ be the inclusion map. By duality we have a canonical projection $\iota^\ast:\Omega(L)\to\Omega(A)$, obtained by restricting to $A$ a form initially defined on $L$. Tensoring $\iota^*$ with the identity on $E$, we denote by the same symbol the surjective map $\iota^*: \Omega(L,E)\to\Omega(A,E)$. We can then define an $L$-superconnection $D_L$ on~$E$ as a map that makes the  following diagram commutative \cite{chenAtiyahClassesStrongly2019}:
\begin{center}
\begin{tikzcd}[column sep=2cm,row sep=2cm]
  \Omega(L,E)_{\blacktriangle}\ar[d,"\iota^*" left] \ar[r, dashed, "D_L"]& \Omega(L,E)_{\blacktriangle+1} \ar[d,"\iota^*" right] \\
 \Omega(A,E)_{\blacktriangle} \ar[r, rightarrow ,"D_A" ]   & \Omega(A,E)_{\blacktriangle+1}
\end{tikzcd} 
\end{center}
The difference between two choices $D_L$ and $D_L'$  of $L$-superconnections lands in $\Omega(L/A,E)$, and 
 the canonical isomorphism $A^\circ\simeq(L/A)^*$ gives that $D_L-D_L'\in\Gamma(\wedge^\bullet A^\circ\otimes \mathrm{End}(E))$.
\end{proof}

Let $E$ be a representation up to homotopy of $A$, and  assume that $D_A$ is extended to an $L$-superconnection $D_L$ on $E$. As in Equation \eqref{eq:decompositionD}, this operator can be decomposed into a sum of components $D^{(k)}_L$ characterized as follows:
\begin{itemize}
\item For $k=0$, we have the identity $D_L^{(0)}=D_A^{(0)}=\partial$. 
\item For $k=1$, the (flat) $A$-connection $\nabla$ on $E$ extends to a (non-necessarily flat) $L$-connection on $E$, also denoted  $\nabla$, and $D_L^{(1)}$ is the exterior covariant derivative $d^\nabla_L$ associated to this $L$-connection $\nabla$, so that its action on elements of $\Omega(L,E)$ is given by Equation \eqref{eq:covariantderive}, where $A$ is replaced by $L$.
\item For $2\leq k\leq \mathrm{rk}(A)$, the connection $k$-form $\omega^{(k)}_A$ extends to a $k$-form on $L$ valued in $\mathrm{End}_{1-k }(E)$ and denoted $\omega_L^{(k)}$, and the action of  $D^{(k)}_L$ on $\Omega(L,E)$ is defined along the lines of Equation \eqref{eqokkk}, 
where $A$ is replaced by $L$.
\end{itemize}
Notice that with respect to this setup, the extension to $L$ of the connection $0$-form $\omega_A^{(0)}$ is itself, so that $\omega_L^{(0)}=\omega_A^{(0)}=\partial$ and the first case is a particular case of the last one.
 
 Although the $L$-superconnection $D_L$  does not necessarily satisfy $(D_L)^2=0$, it does whenever it is restricted to $A$, namely: $(D_L|_A)^2=(D_A)^2=0$. 
  As in the classical theory of linear connections, $D_L$ 
  canonically induces an $L$-superconnection on $\mathrm{End}(E)$ which will be simply denoted by $D$ when the context is clear. The superconnection $D$ is defined through the adjoint action
\begin{equation}\label{eqdefD}
    D(\omega)=[D_L,\omega],
\end{equation}
for every $\omega\in\Omega(L,\mathrm{End}(E))$. The commutator is the one defined in Remark \ref{remark1}. 
As above, the $L$-superconnection $D$ can be decomposed into a sum of components~$D^{(k)}$ of various arities, that can be described as follows:
\begin{itemize}
\item For $k=1$, the operator $D^{(1)}$ is the adjoint action of the exterior covariant derivative~$d^{\nabla}_L$ associated to the $L$-connection extending the $A$-connection $\nabla$ on $E$:
\begin{equation}\label{eqaction3}
\omega\in\Omega^\bullet(L,\mathrm{End}_{\bullet}(E))\overset{D^{(1)}}{\xrightarrow{\hspace{1cm}}} [d^{\nabla}_L,\omega] \in\Omega^{\bullet+1}(L,\mathrm{End}_{\bullet}(E)). 
\end{equation}
\item For $k\neq1$, 
the operator $D^{(k)}$ is the adjoint action of the $k$-form $\omega_L^{(k)}$:
\begin{equation}\label{eqaction2}
    \omega\in\Omega^\bullet(L,\mathrm{End}_{\bullet}(E))\overset{D^{(k)}}{\xrightarrow{\hspace{1cm}}} [\omega_L^{(k)},\omega]\in\Omega^{\bullet+k}(L,\mathrm{End}_{\bullet+1-k}(E)).
\end{equation}
In particular, for $k=0$ we have that $D^{(0)}=[\partial,.\,]$.
\end{itemize}
Since $(D_L)^2\neq0$, there is a priori no reason whatsoever that $D^2=0$, namely, that $\mathrm{End}(E)$ is a homotopy $L$-module. However, we will see in the following that the operator $D$ can be associated to a particular homotopy $A$-module structure on $A^\circ\otimes\mathrm{End}(E)$.

Denote as $\widehat{\Omega}(E)^{\bullet,\bullet}:=\Omega^{\bullet}(A,A^\circ\otimes \mathrm{End}_\bullet(E))$ -- or simply $\widehat{\Omega}(E)$ -- the bigraded vector space of differential forms on $A$ with values in $A^\circ\otimes \mathrm{End}(E)$, namely:
\begin{equation*}
    \widehat{\Omega}(E)^{\bullet,\bullet}=\bigoplus_{j=-\infty}^\infty\bigoplus_{k=0}^{\mathrm{rk}(A)}\Omega^k(A,A^\circ\otimes\mathrm{End}_{j}(E)).
\end{equation*}
The sum is finite as we assumed that the graded vector bundle $E$ is split of finite dimension.
There is a total degree on $\widehat{\Omega}(E)$ defined as the sum of the $A$-form degree and the endomorphism degree. More precisely, an element $\varpi\in\widehat{\Omega}(E)$ of total degree $p$ is a finite sum $\varpi=\sum_l\varpi^{(l)}$ of elements $\varpi^{(l)}\in\Omega^{l}(A,A^\circ\otimes \mathrm{End}_{p-l}(E))$.  



Define a differential operator $s$ on  $\widehat{\Omega}(E)$ 
by
\begin{equation*}
\text{$s=\sum_{k=0}^{\mathrm{rk}(A)}s^{(k)}$, where $s^{(k)}:\widehat{\Omega}(E)^{\bullet, \bullet}\to\widehat{\Omega}(E)^{\bullet+k, \bullet+1-k}$,}
\end{equation*}
so that $s^{(1)}$ corresponds to the exterior covariant derivative associated to the action of $A$ on $A^\circ\otimes \mathrm{End}(E)$, while for $k\neq 1$, the image of a section $\varpi\in\Omega^p(A,A^\circ\otimes\mathrm{End}_q(E))$, is
\begin{align}
    s^{(k)}(\varpi)(a_1,\ldots,a_{k+p};l)&=\frac{1}{k!p!}\Big(
     \omega_A^{(k)}(a_1,\ldots,a_k)\circ\varpi(a_{k+1},\ldots,a_{k+p};l)\label{eqoperators1}\\
    &\quad-(-1)^{p+q+1 +k(p+1)}\varpi(a_{k+1},\ldots,a_{k+p};l)\circ \omega_A^{(k)}(a_1,\ldots,a_k)\nonumber\\
    &\hspace{8cm}+\underset{a_1,\ldots,a_{k+p}}{\circlearrowright}\Big).\nonumber
\end{align}
Here $\circ$ denotes the composition of endomorphisms, and $\circlearrowright$ stands for the signed permutation on $a_1,\ldots,a_{k+p}$. The term $k!p!$ comes from the definition of the wedge product. 

For $k=1$, the component $s^{(1)}$ involves the covariant derivative of the dual of the Bott $A$-action on $L/A$, together with the adjoint action $[\nabla,.\,]$ on $\mathrm{End}(E)$. 
More precisely, the quotient vector bundle $L/A$ admits a distinguished $A$-connection $\nabla^{Bott}:\Gamma(A)\times \Gamma(L/A)\to \Gamma(L/A)$, called the \emph{Bott connection}. Letting $p:L\to{L/A}$ be the quotient map, the Bott connection for any $a\in\Gamma(A)$ and $\ell\in\Gamma(L)$ is
\begin{equation}\label{equationBott}
\nabla^{Bott}_a(p(\ell))=p([a,\ell]).
\end{equation}
 One can check that this definition does not depend on the choice of preimage, and that the connection is flat, thus turning $L/A$ into a representation of the Lie algebroid $A$.
We denote $\widehat{\nabla}$ the sum of the induced $A$-connection $[\nabla,\cdot\,]$ on $\mathrm{End}(E)$ and of the dual connection of $\nabla^{Bott}$ acting on $(L/A)^*\simeq A^\circ$, so that $\widehat{\nabla}$ is an $A$-connection on $A^\circ\otimes \mathrm{End}(E)$.
Then, the operator $s^{(1)}:\widehat{\Omega}(E)^{\bullet,\bullet}\to \widehat{\Omega}(E)^{\bullet+1,\bullet}$ is the exterior covariant derivative of $\widehat{\nabla}$: $s^{(1)}=d^{\widehat{\nabla}}_A$. Its action on a $p$-form $\varpi\in \Omega^p(A,A^\circ\otimes \mathrm{End}_\bullet(E))$ reads
\begin{align}
    d^{\widehat{\nabla}}_A
    \varpi(a_0,\ldots,a_p;l)&=\sum_{i=0}^p\,(-1)^{i}\big[\nabla_{a_i},\varpi(a_0,\ldots,\widehat{a_i},\ldots,a_p;l)\big]\label{sconnect}\\
    &\hspace{1cm}+\sum_{0\leq i,j\leq p}\,(-1)^{i+j}\varpi\big([a_i,a_j],a_0,\ldots,\widehat{a_i},\ldots,\widehat{a_j},\ldots,a_p;l\big)\nonumber\\
     &\hspace{1cm}-\sum_{i=0}^p\,(-1)^{i} \varpi\big(a_0,\ldots,\widehat{a_i},\ldots,a_p;\nabla^{Bott}_{a_i}(l)\big).\nonumber
\end{align}
for every $a_1,\ldots, a_p\in\Gamma(A)$ and $l\in\Gamma(L/A)$.
In the first line the bracket is the graded commutator on $\mathrm{End}(E)$ defined in Remark \ref{remark1}; in the second line it is the Lie algebroid bracket. In the last line, the minus sign is justified by the fact that $\widehat{\nabla}$ involves the dual of the Bott connection. The next Lemma follows from the discussion.

\begin{lemma}
The differential operator $s$ is an $A$-superconnection on $A^\circ\otimes\mathrm{End}(E)$.
\end{lemma}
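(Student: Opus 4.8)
The plan is to unpack what it means for $s$ to be an $A$-superconnection on $A^\circ\otimes\mathrm{End}(E)$ — namely, by Definition \ref{defquillen}, that $s:\widehat{\Omega}(E)_\blacktriangle\to\widehat{\Omega}(E)_{\blacktriangle+1}$ raises total degree by one and satisfies the Leibniz rule $s(\eta\otimes u)=d_A(\eta)\otimes u+(-1)^{|\eta|}\eta\otimes s(u)$ for $\eta\in\Omega(A)$ and $u\in\widehat{\Omega}(E)$ — and then verify these two properties component by component against the explicit formulas \eqref{eqoperators1} and \eqref{sconnect}. The degree count is immediate from the stated bidegrees: $s^{(k)}$ maps $\widehat{\Omega}(E)^{\bullet,\bullet}\to\widehat{\Omega}(E)^{\bullet+k,\bullet+1-k}$, so each $s^{(k)}$ raises total degree by exactly $(k)+(1-k)=1$, hence so does $s=\sum_k s^{(k)}$.

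For the Leibniz rule, I would argue separately for the $k=1$ piece and the $k\neq 1$ pieces. The component $s^{(1)}=d_A^{\widehat\nabla}$ is by construction the exterior covariant derivative of the genuine $A$-connection $\widehat\nabla$ on the vector bundle $A^\circ\otimes\mathrm{End}(E)$, so it satisfies the Leibniz rule \eqref{eq:covariantderive} tautologically — this is just the standard fact about covariant exterior derivatives recalled in Section \ref{sec1}. For $k\neq 1$, the content of \eqref{eqoperators1} is exactly that $s^{(k)}$ acts as $\mathrm{ad}(\omega_A^{(k)})$, i.e. $s^{(k)}(\varpi)=[\omega_A^{(k)},\varpi]$ where the bracket is the one on $\Omega(A,\mathrm{End}(E))$ from Remark \ref{remark1} extended to act by the commutator on the $\mathrm{End}(E)$-factor of $A^\circ\otimes\mathrm{End}(E)$ while leaving the $A^\circ$-factor untouched. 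Since $\omega_A^{(k)}$ is $\mathcal{C}^\infty(M)$-linear (it is a genuine bundle-valued form on $A$, not a differential operator), wedging with it — hence taking the graded commutator with it — is $\Omega(A)$-linear, so $s^{(k)}(\eta\otimes u)=(-1)^{k|\eta|}\eta\wedge s^{(k)}(u)$ picks up no $d_A(\eta)$ term. Note that this is the expected behaviour: in the decomposition \eqref{eq:defD} of any superconnection, only the arity-$1$ piece $d_A^\nabla$ carries the non-tensorial ($d_A\eta$) part of the Leibniz rule, and every other $\omega_A^{(k)}\wedge\cdot$ is $\Omega(A)$-linear; here $s$ has exactly the same shape, with $d_A^{\widehat\nabla}$ playing the role of the arity-$1$ piece and the $[\omega_A^{(k)},\cdot\,]$ playing the role of the higher pieces.

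Summing the two cases, $s=d_A^{\widehat\nabla}+\sum_{k\neq 1}[\omega_A^{(k)},\cdot\,]$ satisfies $s(\eta\otimes u)=d_A(\eta)\otimes u+(-1)^{|\eta|}\eta\otimes s(u)$, which is precisely the defining Leibniz property of an $A$-superconnection, and the proof is complete. I do not anticipate a genuine obstacle here: the lemma is essentially a bookkeeping statement repackaging the already-established formulas \eqref{eqoperators1}–\eqref{sconnect}, and the only mild care needed is the consistent tracking of Koszul signs when checking that the permutation/antisymmetrization conventions in \eqref{eqoperators1} are compatible with writing $s^{(k)}(\cdot)=[\omega_A^{(k)},\cdot\,]$ — but this is exactly the sign convention recorded in Remark \ref{remark1}, so it suffices to point to that. (What is \emph{not} claimed by this lemma, and would be false, is that $s$ squares to zero; that $s^2$ restricted appropriately vanishes — making $A^\circ\otimes\mathrm{End}(E)$ an honest homotopy $A$-module — is the substance of Theorem \ref{Theorem1} and uses flatness of $D_A$, i.e. $(D_A)^2=0$, which plays no role in the present statement.)
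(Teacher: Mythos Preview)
Your approach is correct and matches the paper's own (which in fact gives no detailed argument beyond ``follows from the discussion''): you recognize that $s^{(1)}=d_A^{\widehat\nabla}$ carries the non-tensorial $d_A\eta$ part of the Leibniz rule while every $s^{(k)}$ with $k\neq 1$ is the graded commutator with the tensor $\omega_A^{(k)}$ and hence $\Omega(A)$-linear up to sign. One small slip: the sign you record, $(-1)^{k|\eta|}$, should be $(-1)^{|\eta|}$ --- it is the \emph{total} degree $+1$ of $\omega_A^{(k)}$ (not its form degree $k$) that governs the Koszul sign when commuting past $\eta$, and this is exactly what makes all the pieces assemble into the single Leibniz identity of Definition~\ref{defquillen}.
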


The relationship between $D$ and $s$ can be understood through the following observation. Let $\omega\in\Omega^{p+1}(L,\mathrm{End}(E))$ be a $p+1$-form on $L$ with the property that it is identically vanishing on $\wedge^{p+1}A$. By the canonical isomorphism between $(L/A)^*$ and $A^\circ\subset L^*$, one can define a $p$-form $\varpi$ on $A$ taking values in $A^\circ\otimes \mathrm{End}(E)$ as the restriction of $\omega$ to $\wedge^p A\otimes L/A$. More precisely, if one denotes $\ell$ a preimage in $\Gamma(L)$ of $l\in \Gamma(L/A)$, then $\varpi$ is defined by the following formula:
\begin{equation}\label{eq:restriction}
     \varpi(a_1,\ldots, a_p;l)=\omega(a_1,\ldots, a_p,\ell), 
\end{equation}
for any $a_1,\ldots, a_p\in \Gamma(A)$ and $l\in \Gamma(L/A)$. Another choice of preimage $\ell'$ does not change the left-hand side of Equation \eqref{eq:restriction}, since the difference $\ell'-\ell$ is a section of $A$, and~$\omega$ identically vanishes on $\wedge^{p+1}A$. Moreover, using Equations \eqref{eqaction3} and \eqref{eqaction2}, straightforward calculations show that $D(\omega)$ vanishes identically when restricted to $A$.  Regarding the total degree, the $p+1$-form $\omega\in\Omega^{p+1}(L,\mathrm{End}_{q}(E))$ of total degree $p+q+1$ induces, by the restriction \eqref{eq:restriction}, a $p$-form $\varpi\in \widehat{\Omega}(E)^{p,q}= \Omega^{p}(A,A^\circ\otimes\mathrm{End}_{q}(E))$, of total degree $p+q$. Note that, by definition, the parities of the degrees of $\omega$ and $\varpi$ are opposite to each other.

\begin{lemma}\label{lemgringo}
For every $0\leq k \leq \mathrm{rk}(A)$, the $k$-th component of the $A$-superconnection $s$ on $A^\circ\otimes \mathrm{End}(E)$ is related to the $k$-th component of the $L$-superconnection $D$ on $\mathrm{End}(E)$ by the following formula:
\begin{equation}
s^{(k)}(\varpi)(a_1,\ldots,a_{k+p};l)=D^{(k)}(\omega)(a_1,\ldots,a_{k+p},\ell),\label{eq:relation1}
\end{equation}
where $\omega\in\Omega^{p+1}(L,\mathrm{End}(E))$ has the property that it is identically vanishing on $\wedge^{p+1}A$, and where $\varpi$ is the canonical $p$-form on $A$ obtained from $\omega$ through Equation \eqref{eq:restriction}.
\end{lemma}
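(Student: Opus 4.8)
The plan is to prove Equation \eqref{eq:relation1} by decomposing both sides arity by arity and checking that the restriction procedure \eqref{eq:restriction} intertwines $D^{(k)}$ and $s^{(k)}$ for each fixed $k$. First I would treat the two structurally distinct cases separately: the generic case $k\neq 1$, where $D^{(k)}$ is the adjoint action of the connection $k$-form $\omega_L^{(k)}$ as in \eqref{eqaction2}, and the case $k=1$, where $D^{(1)}$ is the adjoint action of the exterior covariant derivative $d_L^\nabla$ as in \eqref{eqaction3}. Since $\omega_L^{(0)}=\partial$ extends trivially, the case $k=0$ is literally a special case of $k\neq 1$, which is already noted in the text.

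For $k\neq 1$: take $\omega\in\Omega^{p+1}(L,\mathrm{End}_q(E))$ vanishing identically on $\wedge^{p+1}A$, and let $\varpi$ be the associated element of $\widehat{\Omega}(E)^{p,q}$ via \eqref{eq:restriction}. I would evaluate $D^{(k)}(\omega)=[\omega_L^{(k)},\omega]$ on $k+p$ sections of $A$ together with one preimage $\ell\in\Gamma(L)$ of $l\in\Gamma(L/A)$, and unfold the graded commutator of $\mathrm{End}(E)$-valued forms using the wedge/composition conventions of Remark \ref{remark1}. Because $\omega_L^{(k)}$ restricts to $\omega_A^{(k)}$ on $\wedge^k A$ and $\omega$ restricts to $\varpi$ on $\wedge^p A\otimes L/A$, every term in the signed permutation sum where the slot $\ell$ is fed into $\omega_L^{(k)}$ kills $\omega$ on $\wedge^{p+1}A$ after reshuffling — so only the terms where $\ell$ is fed into the $\omega$-factor survive, and those reproduce exactly the right-hand side of \eqref{eqoperators1} defining $s^{(k)}(\varpi)$. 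The sign $(-1)^{p+q+1+k(p+1)}$ in \eqref{eqoperators1} is precisely the Koszul sign $(-1)^{mn}$ of \eqref{eq:commutatorEnd} with $m=p+1$ (the form-degree-plus-endomorphism-degree of $\omega$, up to the parity shift noted before the lemma) and $n=k$, after absorbing the permutation signs; bookkeeping the combinatorial factor $\tfrac{1}{k!p!}$ against the $(k+p)!$-term permutation sum is the routine part.

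For $k=1$: here I would use formula \eqref{sconnect} for $s^{(1)}=d_A^{\widehat\nabla}\varpi$ and the standard Koszul formula for $d_L^\nabla$ on $\mathrm{End}(E)$-valued forms, evaluated on $a_0,\dots,a_p,\ell$. The key point is that the three lines of \eqref{sconnect} match, respectively: the terms of $[d_L^\nabla,\omega]$ where the covariant derivative acts by $[\nabla_{a_i},-]$ on the $\mathrm{End}(E)$-part; the terms involving the Lie bracket $[a_i,a_j]$ of sections of $A$ (these stay inside $\wedge A$, so no subtlety); and the single term where the bracket $[a_i,\ell]$ appears, which is $p([a_i,\ell])=\nabla^{Bott}_{a_i}(l)$ fed into $\omega$ — reproducing, with the correct minus sign from dualizing the Bott connection, the last line of \eqref{sconnect}. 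The only mild obstacle is verifying that the bracket term $[a_i,\ell]$ contributes only through its class in $L/A$: this holds because $\omega$ vanishes on $\wedge^{p+1}A$, so the $A$-component of $[a_i,\ell]$ drops out, which is exactly the same independence-of-preimage argument already used to justify \eqref{eq:restriction}.

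The main obstacle, such as it is, is purely a sign-and-combinatorics check: confirming that the Koszul signs produced by moving $\ell$ past the $a_i$'s in the antisymmetrization of the wedge product, combined with the parity shift (the degrees of $\omega$ and $\varpi$ have opposite parity, as noted just before the lemma), assemble into exactly the sign displayed in \eqref{eqoperators1} and \eqref{sconnect}. No conceptual difficulty arises beyond this; the statement follows once the restriction map is shown to commute with each graded-commutator operation termwise.
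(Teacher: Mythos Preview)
Your plan is correct and matches the paper's approach: for $k\neq 1$ you unfold the graded commutator $[\omega_L^{(k)},\omega]$, observe that every term with $\ell$ in a slot of $\omega_L^{(k)}$ dies because $\omega|_{\wedge^{p+1}A}=0$, and reduce to the formula \eqref{eqoperators1} for $s^{(k)}$ after the combinatorial rescaling $\tfrac{1}{(p+1)!}\to\tfrac{1}{p!}$ (the paper makes this factor of $p+1$ explicit by gathering the cyclic positions of $\ell$ inside $\omega$). Your separate treatment of $k=1$ via the Koszul formula and the identification $p([a_i,\ell])=\nabla^{Bott}_{a_i}(l)$ is correct and is actually more thorough than the paper, which only writes out the commutator case explicitly.
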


\begin{proof}
This comes from a straightforward application of Conventions~\eqref{eq:commutatorEnd} and \eqref{eq:restriction}. 
 Indeed, the right-hand side of Equation~\eqref{eq:relation1} reads:
 \begin{align}
     D^{(k)}(\omega)(a_1,\ldots,a_{k+p},\ell)&=\frac{1}{k!(p+1)!}\Big(
     \omega_L^{(k)}(a_1,\ldots,a_k)\circ\omega(a_{k+1},\ldots,a_{k+p},\ell)\label{eqoperators1000}\\
    &\quad-(-1)^{p+q+1 +k(p+1)}\omega(a_{k+1},\ldots,a_{k+p},\ell)\circ \omega_L^{(k)}(a_1,\ldots,a_k)\nonumber\\
    &\hspace{8cm}+\underset{a_1,\ldots,a_{k+p},\ell}{\circlearrowright}\Big).\nonumber
\end{align}
The signed permutation $\circlearrowright$ is now made over all elements, including $\ell$.

Whenever $\ell$ is entering a slot of $\omega^{(k)}_L$, it means that all slots of 
$\omega$ are filled with sections of~$A$, but $\omega|_{\wedge^{p+1}A}=0$. Hence the only non vanishing contributions in Equation~\eqref{eqoperators1000} come from 
terms of the form $\omega(\ldots,\ell,\ldots)$. But then, among all such terms, many of them are similar once we put 
$\ell$ in the very last slot. More precisely, for each combination of $p$ elements $a_i,\ldots, a_{i+p-1}$, we have:
\begin{align*}
\omega(a_{i},\ldots,a_{i+p-1},\ell)
&+\sum_{k=1}^{p-1} (-1)^{kp}\omega(a_{i+k},\ldots,a_{i+p-1},\ell,a_i,\ldots,a_{i+k-1})
\\
&+(-1)^{p^2}\omega(\ell,a_{i},\ldots,a_{i+p-1})=(p+1)\omega(a_{i},\ldots,a_{i+p-1},\ell)
\end{align*}
Using this trick for all kind of combinations $a_i,\ldots, a_{i+p-1}$, there is a factor $p+1$ coming outside of the parenthesis in Equation \eqref{eqoperators1000}, giving back the right-hand side of Equation~\eqref{eqoperators1} once we use Equation \eqref{eq:restriction} to identify $\varpi$ and $\omega$. \end{proof}

As Equations  \eqref{eqoperators1} and \eqref{sconnect} explicitly show, the $A$-superconnection $s$ does not a priori depend on any choice of $L$-superconnection $D_L$ extending $D_A$. This is not so clear from Equation  \eqref{eq:relation1} but the following proposition confirms the original observation.

\begin{proposition}\label{proprestriction}
   Equation \eqref{eq:relation1} does not depend on the choice of $L$-superconnection $D_L$ extending the $A$-superconnection $D_A$.
\end{proposition}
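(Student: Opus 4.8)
The plan is to show that the right-hand side of Equation~\eqref{eq:relation1}, namely $D^{(k)}(\omega)(a_1,\ldots,a_{k+p},\ell)$, is insensitive to replacing $D_L$ by another extension $D_L'$, by leveraging the difference bound from Lemma~\ref{lem:extension}. First I would write $D_L' = D_L + \Theta$, where by Lemma~\ref{lem:extension} we have $\Theta\in\Gamma(\wedge^\bullet A^\circ\otimes\mathrm{End}(E))$; decomposing by arity, each component $\Theta^{(k)}$ is an $\mathrm{End}(E)$-valued $k$-form on $L$ that \emph{vanishes whenever at least one entry lies in $A$} (equivalently, it is a $k$-form on $L/A$). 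Consequently, the extended connection forms differ by $\omega_{L}'^{(k)} = \omega_L^{(k)} + \Theta^{(k)}$ with $\Theta^{(k)}|_{\wedge^{k-1}A\otimes L}=0$ for $k\geq 1$ (and $\Theta^{(0)}=0$ since $\omega_L^{(0)}=\partial$ is fixed), and the induced operators on $\mathrm{End}(E)$ differ by $D'^{(k)}(\omega) = D^{(k)}(\omega) + [\Theta^{(k)},\omega]$.

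The key observation is then a slot-counting argument. In the expression $D^{(k)}(\omega)(a_1,\ldots,a_{k+p},\ell)$ from Equation~\eqref{eqoperators1000}, the argument $\ell$ (the $L/A$-direction) occupies exactly one of the $k+p+1$ slots; all other $k+p$ slots are filled by sections $a_i\in\Gamma(A)$. The extra term $[\Theta^{(k)},\omega]$ distributes, under the signed permutation $\circlearrowright$, so that in each resulting summand $\ell$ lands either in a slot of $\Theta^{(k)}$ or in a slot of $\omega$. If $\ell$ enters $\omega$, then $\Theta^{(k)}$ receives only sections of $A$, so $\Theta^{(k)}(\ldots)=0$ since $\Theta^{(k)}$ vanishes on $\wedge^k A$; if $\ell$ enters $\Theta^{(k)}$, then $\omega$ receives $p+1$ slots all filled by sections of $A$, but $\omega|_{\wedge^{p+1}A}=0$ by hypothesis in Lemma~\ref{lemgringo}. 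Hence every summand of $[\Theta^{(k)},\omega](a_1,\ldots,a_{k+p},\ell)$ vanishes, and $D'^{(k)}(\omega)(a_1,\ldots,a_{k+p},\ell) = D^{(k)}(\omega)(a_1,\ldots,a_{k+p},\ell)$.

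I would present this by first recording the decomposition $D_L' = D_L+\Theta$ with the arity-wise vanishing properties of $\Theta^{(k)}$, then noting that $\omega$ is still the same $p+1$-form vanishing on $\wedge^{p+1}A$ and $\varpi$ its restriction (these data do not involve $D_L$), and finally running the two-case slot argument above to kill $[\Theta^{(k)},\omega]$ on the relevant arguments. An alternative, slicker phrasing: the restriction map $\omega\mapsto\varpi$ of Equation~\eqref{eq:restriction} only sees the component of $D^{(k)}(\omega)$ in $\Omega^{\bullet}(A,A^\circ\otimes\mathrm{End}(E))$, i.e. forms on $A$ valued in the $L/A$-direction, and the correction term $[\Theta^{(k)},\omega]$ — being a product of two things each of which is ``$A^\circ$-valued'' in the appropriate sense (one factor annihilates $A$, the other is assumed to, after restriction) — contributes $0$ to that component. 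The main (and essentially only) obstacle is bookkeeping the signs and the $\circlearrowright$ permutation carefully enough to be sure that no surviving term has $\ell$ in a ``good'' slot of both factors simultaneously; but since $\Theta^{(k)}$ kills $\wedge^k A$ and $\omega$ kills $\wedge^{p+1}A$, the two bad cases are exhaustive and the cancellation is forced. This also re-proves, consistently with Equations~\eqref{eqoperators1} and \eqref{sconnect}, that $s^{(k)}$ itself is well defined independently of the extension.
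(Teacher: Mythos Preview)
Your argument is correct and is essentially the paper's own proof: the difference $\Theta^{(k)}$ of the two extended connection $k$-forms vanishes on $\wedge^{k}A$, and in $[\Theta^{(k)},\omega](a_1,\ldots,a_{k+p},\ell)$ every summand dies because the single $\ell$-slot forces either $\Theta^{(k)}$ or $\omega$ to be evaluated on $A$-entries only. One caution: your stronger assertion that $\Theta^{(k)}$ vanishes whenever \emph{any} entry lies in $A$ (i.e.\ $\Theta^{(k)}\in\wedge^{k}A^\circ$, as a literal reading of Lemma~\ref{lem:extension} might suggest) does not follow from the extension condition in general---only $\Theta^{(k)}|_{\wedge^{k}A}=0$ does---but your slot argument invokes only the latter, so the proof stands unchanged.
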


\begin{proof}
The first component $s^{(0)}$, depending only on $\partial$, does not depend on the extension. Thus, we want to show that the left-hand side of Equations \eqref{eq:relation1}, for $1\leq k\leq \mathrm{rk}(A)$,  does not depend on the choice of extension to $L$ of the $A$-superconnection $D_A$.

Let $D_L$ and $D'_L$ be two extensions to $L$ of the $A$-superconnection $D_A$. Their associated connection forms are denoted $\omega_L^{(k)}$ and $\omega'^{(k)}_L$, respectively. In particular, the connection $1$-form $\omega_L^{(1)}$ (resp. $\omega'^{(1)}_L$) corresponds to a Lie algebroid $L$-connection $\nabla$ (resp. $\nabla'$) extending the Lie algebroid $A$-connection $\nabla$ on $E$. We set  $\varphi^{(1)}=\nabla-\nabla'$ so that $d_L^\nabla-d_L^{\nabla'}=\varphi^{(1)}\wedge\cdot$. 
For every $2\leq k\leq \mathrm{rk}(A)$, we set 
\[\varphi^{(k)}=\omega^{(k)}_L-\omega'^{(k)}_L.\] 
It is a $k$-form on $L$ valued in $\mathrm{End}_{-k+1}(E)$ of total degree $1$, vanishing when restricted to~$\wedge^{k}A$. 
We denote $s$ and $s'$ the two $A$-superconnections associated to the two $L$-superconnections~$D_L$ and $D'_L$ through Equation \eqref{eq:relation1}, respectively.
Let $\omega$ be a $p+1$-form on $L$ valued in $\mathrm{End}(E)$ and vanishing identically on~$\wedge^{p+1}A$. 

For $k=1$, we have that
\begin{equation*}
    (d_L^{\nabla}-d_L^{\nabla'})\omega(a_0,\ldots,a_p,\ell) 
    =\varphi^{(1)}\wedge\omega (a_0,\ldots,a_p,\ell) .
\end{equation*}
The term on the right-hand side reads
\begin{align*}
\varphi^{(1)}\wedge\omega (a_0,\ldots,a_p,\ell)=\frac{1}{(p+1)!}\Big((-1)^{p+1}&\varphi^{(1)}(\ell)\circ\omega(a_0,\ldots,a_p)\\
&+\sum_{j=0}^p (-1)^j\varphi^{(1)}(a_j)\circ\omega(a_0,\ldots,\widehat{a_j},\ldots,a_p,\ell)\Big).
\end{align*}
The factorial $(p+1)!$ comes from the definition of the wedge product.
The first term on the right-hand side is zero because $\omega\big|_{\wedge^{p+1}A}=0$, and the second term vanishes too because $\varphi^{(1)}\big|_{A}=0$. Hence, from Equation \eqref{eq:relation1} for $k=1$, we deduce that $s^{(1)}=s'^{(1)}$.

For $2\leq k\leq \mathrm{rk}(A)$, then $p+k>p+1$, and we have:
\begin{align*}
\big[\omega^{(k)}_L-\omega'^{(k)}_L,\omega\big](a_0,\ldots,a_{k+p},\ell)
&=\big[\varphi^{(k)},\omega\big] (a_0,\ldots,a_{k+p},\ell)\\
&= \frac{1}{k!(p+1)!}\Big(\varphi^{(k)}(a_0,\ldots,a_k)\circ \omega(a_{k+1},\ldots,a_{k+p},\ell)\\
&\qquad-(-1)^{|\omega|}\omega(a_0,\ldots,a_{p+1})\circ  \varphi^{(k)}(a_{p+2},\ldots,a_k, \ell) +\circlearrowright\Big),
\end{align*}
where $|\omega|$ is the total degree of the form $\omega$, and where the symbol ${\circlearrowright}$ means that we take all signed permutations of all entries, including $\ell$. Both terms on the right-hand side vanish because either $\varphi^{(k)}\big|_{\wedge^kA}=0$ or $\omega\big|_{\wedge^{p+1}A}=0$. This proves that $(D^{(k)}-D'^{(k)})\omega(a_0,\ldots,a_{k+p},\ell)=0$, hence proving by Equation \eqref{eq:relation1} that $s^{(k)}=s'^{(k)}$.
\end{proof}


Although the $L$-superconnection $D$ on $\mathrm{End}(E)$ does not necessarily satisfy $D^2=0$, the induced operator~$s$ does.

\begin{proposition}\label{prophomologys}
The $A$-superconnection $s=s^{(0)}+d^{\widehat{\nabla}}_A +s^{(2)} +\ldots+s^{(\mathrm{rk}(A))}$ on $A^\circ\otimes \mathrm{End}(E)$ 
defines a representation up to homotopy of $A$, i.e. it is a total degree $+1$ differential on the bigraded vector space $\widehat{\Omega}(E)^{\bullet,\bullet}$.
\end{proposition}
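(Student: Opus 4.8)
The plan is to deduce $s^2=0$ from the curvature identity $D^2=[F_L,-\,]$, where $F_L:=(D_L)^2$, and then to transport that identity through the correspondence of Lemma~\ref{lemgringo}. Together with the preceding Lemma (that $s$ is an $A$-superconnection, with $s^{(0)}=\mathrm{id}_{A^\circ}\otimes[\partial,-\,]$), this yields the statement via Definition~\ref{repuptohom}(1).

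First I would record the two algebraic inputs. Since $D_L$ obeys the Leibniz rule and $d_L^2=0$, the operator $(D_L)^2$ is $\Omega(L)$-linear, hence is wedge multiplication by an $\mathrm{End}(E)$-valued form $F_L$ on $L$ of total degree $2$; and because $\iota^*\circ D_L=D_A\circ\iota^*$ with $(D_A)^2=0$, this form vanishes identically on $\wedge^\bullet A$. By the graded Jacobi identity and the oddness of $D_L$, one gets $D^2(\omega)=[D_L,[D_L,\omega]]=[(D_L)^2,\omega]=[F_L,\omega]$ for every $\omega\in\Omega(L,\mathrm{End}(E))$.

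Next I would introduce the filtration $F^\bullet$ of $\Omega(L,\mathrm{End}(E))$ by powers of the ideal generated by $\Gamma(A^\circ)$ in the form part: $F^q$ is the set of forms that vanish as soon as fewer than $q$ of their arguments lie outside $A$. Because $A$ is a Lie subalgebroid this ideal is $d_L$-stable, and since the remaining components $\partial$ and $\omega_L^{(k)}\wedge\cdot$ of $D_L$ are $\Omega(L)$-linear, the whole filtration is $D_L$-stable, hence $D$-stable; in particular $D(F^1)\subseteq F^1$ (already noted in the proof of Lemma~\ref{lemgringo}) and $D(F^2)\subseteq F^2$. The restriction map of Equation~\eqref{eq:restriction} identifies $\widehat{\Omega}(E)^{p,q}$ with $\bigl(F^1\cap\Omega^{p+1}(L,\mathrm{End}_q(E))\bigr)/F^2$ — it is surjective, with kernel exactly $F^2$ — and Lemma~\ref{lemgringo} says precisely that the operator induced by $D$ on the subquotient $F^1/F^2$ is $s$ (this also re-derives the $D_L$-independence of Proposition~\ref{proprestriction}).

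Finally, for $\omega\in F^1$ one has $D^2(\omega)=[F_L,\omega]$ with both $F_L$ and $\omega$ in $F^1$, so their graded commutator lies in $F^1\wedge F^1\subseteq F^2$; passing to $F^1/F^2\cong\widehat{\Omega}(E)$ annihilates it, so $s^2=0$ on $\widehat{\Omega}(E)$. Hence $s$ is a total-degree-$+1$ differential on $\widehat{\Omega}(E)^{\bullet,\bullet}$ and $A^\circ\otimes\mathrm{End}(E)$ is a representation up to homotopy of $A$. The only genuine bookkeeping is the identification $\widehat{\Omega}(E)\cong F^1/F^2$ and the differential-ideal property of $\Gamma(A^\circ)$ that is forced by $A$ being a subalgebroid; I expect that to be the main (though routine) obstacle. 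An alternative, heavier route would expand $s^2=\sum_m\sum_{k+k'=m}s^{(k)}s^{(k')}$ arity by arity and cancel terms using the structure equations \eqref{eqnablapartial}--\eqref{eqimpot} of $(E,D_A)$ and flatness of the Bott connection, at the cost of substantial sign bookkeeping.
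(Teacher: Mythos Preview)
Your proof is correct and takes a genuinely different, more streamlined route than the paper's. The paper verifies the first three identities $(s^{(0)})^2=0$, $[s^{(0)},d_A^{\widehat{\nabla}}]=0$, and $(d_A^{\widehat{\nabla}})^2+[s^{(0)},s^{(2)}]=0$ one at a time---invoking Lemma~\ref{lemgringo} only for the first, and using $[\nabla_a,\partial]=0$ together with the flatness of the Bott connection for the other two---and then disposes of the general equation \eqref{eq:highereq} by appealing to the fact that the tensor product of the $A$-module $A^\circ$ with the homotopy $A$-module $\mathrm{End}(E)$ is again a homotopy $A$-module. Your filtration argument handles all arities uniformly: realising $\widehat{\Omega}(E)$ as the associated graded piece $F^1/F^2$ and $s$ as the induced operator turns $s^2=0$ into the observation that $D^2=[F_L,-\,]$ sends $F^1$ into $F^1\cdot F^1\subseteq F^2$, since $F_L\in F^1$ by \eqref{eq:kojima}. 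This buys conceptual clarity and self-containment (you do not need the tensor-product statement, which the paper does not prove), while the paper's approach makes the geometric content of the low equations more visible---for instance that $(d_A^{\widehat{\nabla}})^2$ is governed by $R_{\nabla^{Bott}}=0$ and $R_A^{(2)}=-[\partial,\omega^{(2)}]$. Your ``alternative, heavier route'' is essentially what the paper carries out for small~$k$ before passing to the abstract tensor-product argument.
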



\begin{proof}The equation $s^2=0$ is equivalent to the following set of equations (together with their meaning on the right-hand side):
\begin{align}
    \big(s^{(0)}\big)^2&=0 &&\text{$s^{(0)}$ is a differential},\label{eqdiff00}\\
    [s^{(0)}, d^{\widehat{\nabla}}_A]
    &=0 &&\text{$s^{(0)}$ and $s^{(1)}=d^{\widehat{\nabla}}_A$ anticommute}, \label{eq:anticommute}\\
    \big(d^{\widehat{\nabla}}_A\big)^2+[s^{(0)},s^{(2)}]&=0&&\text{$d^{\widehat{\nabla}}_A$ is a differential modulo $s^{(0)}$},\label{eq:commutator}
\end{align}
and more generally, for any $k\geq0$:
\begin{equation}\label{eq:highereq}
    \sum_{l=0}^k s^{(l)}s^{(k-l)}=0.
\end{equation}

Let $\omega$ be a $p+1$-form on $L$ taking values in $\mathrm{End}(E)$ with the property that it identically vanishes on $\wedge^{p+1}A$, and let $\varpi$ be the  restricted $p$-form on $A$ valued in $A^\circ\otimes\mathrm{End}(E)$ obtained by Equation~\eqref{eq:restriction}. 
Then, $D^{(0)}(\omega)=[\partial,\omega]$ is also vanishing on $A$ so, by applying
Equation~\eqref{eq:relation1} to $D^{(0)}(\omega)$, we have that 
\begin{equation*}\big(s^{(0)}\big)^2(\varpi)(a_1,\ldots,a_p;l)=\big(D^{(0)}\big)^2(\omega)(a_1,\ldots,a_p,\ell).
\end{equation*}
By Equation \eqref{eqaction2}, the right-hand side corresponds to $[\partial,[\partial,\omega(a_1,\ldots,a_p,\ell)]]$, where the commutator is the graded commutator on $\Omega(L,\mathrm{End}(E))$ extending that on $\Omega(A,\mathrm{End}(E))$ defined in Remark \ref{remark1}).  This $\mathrm{End}(E)$-valued $A$-form can be further rewritten as $\big[\frac{1}{2}[\partial,\partial],\omega\big]$, which squares to zero 
so that Equation \eqref{eqdiff00} is straightforwardly satisfied.  

Regarding Equation \eqref{eq:anticommute}, we will use the identity $[\nabla_{a},\partial]=0$, valid for every $a\in\Gamma(a)$. This identity indeed implies that the first summand on the right hand side of Equation~\eqref{sconnect} reads
\begin{equation*}
    s^{(0)}\big([\nabla_{a_i},\varpi(a_0,\ldots,\widehat{a_i},\ldots,a_p;l)]\big)=\big[\nabla_{a_i},s^{(0)}(\varpi)(a_0,\ldots,\widehat{a_i},\ldots,a_p;l)\big].
\end{equation*}
Then, since  the other terms of Equation \eqref{sconnect} trivially commute with $\partial$, we obtain that $[\partial,d_A^{\widehat{\nabla}}]=0$, implying in turn Equation \eqref{eq:anticommute}.
Finally, Equation \eqref{eq:commutator} can be shown directly through Equation \eqref{sconnect}, 
by applying  $d^{\widehat{\nabla}}_A$ twice  on a section $f\in\Gamma(A^\circ\otimes \mathrm{End}(E))$:
\begin{equation*}
(d^{\widehat{\nabla}}_A)^2(f)(a,b; l)=\big[ R^{(2)}_A(a,b),f(l)\big]-f(R_{\nabla^{Bott}}(a,b)(l)),
\end{equation*}
where $R_{\nabla^{Bott}}=0$ is the curvature of the Bott connection.
Using the identity $R^{(2)}_A=-[\partial,\omega^{(2)}]$, 
one obtains Equation \eqref{eq:commutator} restricted to $\Gamma(A^\circ\otimes \mathrm{End}(E))$.
The argument is still valid for any $p$-form  $\varpi\in\widehat{\Omega}(E)^{p,q}$ of total degree $p+q$, but the computation is more cumbersome.

More generally, 
the annihilator bundle $A^\circ$ together with the Bott connection is a Lie algebroid representation of $A$ while $\mathrm{End}(E)$ is a homotopy $A$-module. Their tensor product is then a homotopy $A$-module as well, and it is governed by Equation \eqref{eq:highereq}, which is Equation  \eqref{eqimpot} adapted to the particular  present context.
\end{proof}

\subsection{Atiyah classes of representations up to homotopy}

In the classical case, i.e. when we consider a Lie algebroid representation $E=E_0$ of $A$ extended to an $L$-connection on $E$, the covariant derivative $d^\nabla_L$ acting on $\Omega^\bullet(L,E)$, can be seen as a degree $+1$ vector field on the said space of functions on an appropriate graded manifold\footnote{Namely, it is the pullback graded vector bundle $\pi^*E\to L[1]$ along the projection $\pi\colon L[1]\to~M$.}, so it necessarily satisfies
\begin{equation*}[d^\nabla_L,[d^\nabla_L,d^\nabla_L]]=0.\end{equation*}
This equation is precisely the  Bianchi identity $d^{\nabla}_L(R^L_\nabla)=0$. Here and in the following, the bracket is the graded commutator of endomorphisms as defined in Remark \ref{remark1}, where~$A$ is replaced by $L$.

Now let us prove a statement similar to the Bianchi identity for the operator $D_L$ extending $D_A$. First, we set
\begin{equation}\label{eq:Dcurvature0}
    a^{(1)}= [d_L^\nabla,\partial]. 
\end{equation}
The right-hand side is symmetric, in the sense that $ [d_L^\nabla,\partial]=[\partial,d_L^\nabla]$, and from Equation~\eqref{eqaction3}, we see that it is  
 a 1-form on $L$ taking values in $\mathrm{End}_{1}(E)$.
 For every $k\geq2$,  define the following $k$-form on $L$ taking values in $\mathrm{End}_{2-k}(E)$:
\begin{equation}\label{eq:Dcurvature}
a^{(k)}=R_L^{(k)}+[\partial,\omega_L^{(k)}],
\end{equation}
where by convention $\omega_L^{(k)}$ is assumed to be zero whenever $k\geq\mathrm{rk}(A)+1$, and where
\begin{equation}\label{eq:curvatureL}
R_L^{(k)}=d_L\omega_L^{(k-1)}+\underset{s+t=k}{\sum_{1\leq s, t\leq k-1}}\ \omega_L^{(s)}\wedge\omega_L^{(t)}.
\end{equation}
By construction, for every $k\geq1$, we have that
\begin{equation}\label{eq:kojima}
a^{(k)}\big|_{\wedge^{k}A}=0.
\end{equation}
For $1\leq k\leq \mathrm{rk}(A)$, this comes from the homotopy $A$-module structure on $E$, while for $k\geq \mathrm{rk}(A)+1$, this comes from the purely algebraic fact that there is no form on $A$ of form degree higher than  $\mathrm{rk}(A)$. 


Since the highest possibly non-vanishing connection $k$-form is $\omega_L^{(\mathrm{rk}(A))}$, the curvature $k$-form $R_L^{(k)}$ -- and thus also $a^{(k)}$ -- vanishes for $k\geq 2\,\mathrm{rk}(A)+1$. Then, the sums
$a=\sum_{k\geq1}a^{(k)}$ and $R_L=\sum_{k\geq2}R_L^{(k)}$ 
are finite. Furthermore, the sum $\sum_{k\geq2}\omega_L^{(k)}$ terminates at $k=\mathrm{rk}(A)$, and we set $\omega_L=\sum_{k=2}^{\mathrm{rk}(A)}\omega_L^{(k)}$.  From the observation that in the present setup the $L$-superconnection is $D_L=\partial+d_L^{\nabla}+\omega_L$, one deduces that Equations~\eqref{eq:Dcurvature0} and~\eqref{eq:Dcurvature} are equivalent to
\begin{equation}
a=(D_L)^2=[d_L^\nabla,\partial]+R_L\wedge\cdot+[\partial,\omega_L].\label{eq:aD}
\end{equation} 
This equation is reminiscent of the definition of the curvature of a connection $(d^\nabla)^2=R_\nabla\wedge\cdot$.
Thus, the following Lemma establishes a generalization of the Bianchi identity adapted to the present setup.

\begin{lemma}\label{prop:Bianchi} Let $A\hookrightarrow L$ be a Lie pair and $E$ be a representation up to homotopy of $A$. Let $D_A$ be the corresponding flat $A$-superconnection and assume it is extended to an $L$-superconnection $D_L$, and set $D=[D_L,.\,]$. 
Then 
\begin{equation}\label{eq:curvature2}
    \sum_{l=0}^k\,D^{(l)}a^{(k+1-l)}=0, \qquad \text{for every $k\geq0$.} 
\end{equation}
\end{lemma}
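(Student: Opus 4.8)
The plan is to deduce the generalized Bianchi identity \eqref{eq:curvature2} from the Jacobi identity for the graded commutator on $\Omega(L,\mathrm{End}(E))$, exactly as the classical Bianchi identity $d^\nabla_L(R^L_\nabla)=0$ follows from $[d^\nabla_L,[d^\nabla_L,d^\nabla_L]]=0$. Concretely, recall from Equation \eqref{eq:aD} that $a=(D_L)^2=\tfrac12[D_L,D_L]$, where $D_L$ is viewed as a total degree $+1$ operator on $\Omega(L,E)$ (equivalently, a degree $+1$ vector field on the graded manifold $\pi^*E\to L[1]$). First I would write $D(a)=[D_L,a]=[D_L,(D_L)^2]$ and observe that, by the graded Jacobi identity applied to the three copies of the odd operator $D_L$, one has $[D_L,[D_L,D_L]]=0$, hence $D(a)=[D_L,(D_L)^2]=0$. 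This is the single clean identity that contains \eqref{eq:curvature2}; the rest is bookkeeping by arity.

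The second step is to extract the arity-graded components. Both $D=[D_L,\cdot\,]$ and $a$ decompose into finite sums $D=\sum_{l\ge0}D^{(l)}$ and $a=\sum_{j\ge1}a^{(j)}$ according to the $L$-form degree, with $D^{(l)}$ raising form degree by $l$ and preserving total degree, and $a^{(j)}\in\Omega^j(L,\mathrm{End}(E))$ of total degree $2$ (so $a^{(1)}=[d^\nabla_L,\partial]$ has endomorphism degree $+1$, and $a^{(j)}$ for $j\ge2$ has endomorphism degree $2-j$). Since $D(a)=0$ and $D$ preserves total degree while $a$ is homogeneous of total degree $2$, the identity $D(a)=0$ holds componentwise in $L$-form degree: collecting all contributions that land in $\Omega^{k+1}(L,\mathrm{End}(E))$ gives precisely $\sum_{l=0}^{k} D^{(l)}a^{(k+1-l)}=0$ for every $k\ge0$, which is \eqref{eq:curvature2}. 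Here one uses that $D^{(l)}$ is the adjoint action of $\omega_L^{(l)}$ for $l\ne1$ (with $\omega_L^{(0)}=\partial$) and of $d^\nabla_L$ for $l=1$, as recorded in Equations \eqref{eqaction3}–\eqref{eqaction2}, and that $a^{(j)}=0$ for $j\ge 2\,\mathrm{rk}(A)+1$ so all sums are finite.

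Alternatively, and perhaps more in the spirit of a self-contained proof, one can verify $[D_L,(D_L)^2]=0$ directly: expanding $(D_L)^2=D_L\circ D_L$ one has $D_L\circ(D_L\circ D_L)-(D_L\circ D_L)\circ D_L=0$ trivially by associativity, and then note that $(D_L)^2$ acts on $\Omega(L,E)$ as left multiplication (wedge) by the $\mathrm{End}(E)$-valued form $a$, because $(D_L)^2$ is $\Omega(L)$-linear — this is the standard argument that the square of a superconnection is tensorial — so $[D_L,(D_L)^2]$ corresponds to the operator $\omega\mapsto [D_L,a\wedge\cdot\,]=D(a)\wedge\cdot$, whence $D(a)=0$. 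Substituting $a=\sum_j a^{(j)}$ and $D=\sum_l D^{(l)}$ and comparing arities yields \eqref{eq:curvature2}.

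The main obstacle I anticipate is purely organizational rather than conceptual: one must be careful that the decomposition $D(a)=\sum_{k\ge0}\big(\sum_{l=0}^{k}D^{(l)}a^{(k+1-l)}\big)$ is valid, i.e. that $D^{(0)}$ acting on $a^{(k+1)}$ is genuinely $[\partial,a^{(k+1)}]$ and not something else, and that the case $k=0$ reproduces $[\partial,a^{(1)}]=0$ — equivalently $[\partial,[d^\nabla_L,\partial]]=0$, which follows from $[d^\nabla_L,[\partial,\partial]]=2[d^\nabla_L,\partial^2]$ together with $\partial^2=0$ and the Jacobi identity. One should also double-check the sign and total-degree conventions from Remark \ref{remark1} so that the graded Jacobi identity for three odd operators indeed gives the vanishing $[D_L,[D_L,D_L]]=0$ with no leftover factor; this is where a sign slip is most likely, but it does not affect the structure of the argument.
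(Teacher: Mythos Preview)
Your proposal is correct and follows essentially the same route as the paper's proof: both reduce Equation~\eqref{eq:curvature2} to the single identity $D(a)=0$, and both derive the latter from $[D_L,[D_L,D_L]]=0$ together with $a=(D_L)^2=\tfrac12[D_L,D_L]$ and $D=[D_L,\cdot\,]$. Your write-up is a bit more explicit about the arity decomposition and the tensoriality of $(D_L)^2$, but the argument is the same.
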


\begin{proof} The identities can be shown by direct computation but  this is cumbersome. It is indeed a straightforward computation to show that the set of Equations \eqref{eq:curvature2} is equivalent to the equation $D(a)=0$. Instead we will use a simpler, graded geometric approach.

In the present case, where we consider representations up to homotopy, the covariant derivative $d^\nabla_L$ is replaced by the $L$-superconnection $D_L$, but the idea is the same. As in the classical case the operator $D_L$ does not necessarily square to $0$ but, as a degree 1 operator on $\Omega(L,\mathrm{End}(E))$, it satisfies 
\begin{equation}
[D_L,[D_L,D_L]]=0.\label{eq:3D}
\end{equation}
Since $[D_L,D_L]=2\, (D_L)^2$, Equations  \eqref{eqdefD}, \eqref{eq:aD}
 and \eqref{eq:3D} imply that $D(a)=0$.
\end{proof}

From Equation \eqref{eq:kojima}, 
we deduce that the $k+1$-forms $a^{(k+1)}\in\Omega^{k+1}(L,\mathrm{End}_{1-k}(E))$ ($0\leq k\leq\mathrm{rk}(A)$) defined by Equations \eqref{eq:Dcurvature0} and \eqref{eq:Dcurvature}, give rise, through Equation \eqref{eq:restriction}, to a family of $k$-forms $\alpha^{(k)} \in\Omega^k(A,A^\circ\otimes\mathrm{End}_{1-k}(E))$.
In other words, if one denotes $\ell$ a preimage in $\Gamma(L)$ of $l\in \Gamma(L/A)$, we have:
\begin{align}
    \alpha^{(0)}(l)&
    =    [\nabla_\ell,\partial],\label{eq:alpha0}\\
    \alpha^{(k)}(a_1,\ldots,a_k;l)&=R_L^{(k+1)}(a_1,\ldots,a_k,\ell)+[\partial,\omega^{(k+1)}_L](a_1,\ldots,a_k,\ell),\label{eq:alpha4}
\end{align}
for every $a_1,\ldots, a_k\in\Gamma(A)$, and $1\leq k\leq\mathrm{rk}(A)$. 
Equation \eqref{eq:alpha0} is obtained through direct calculation from Equation \eqref{eq:Dcurvature0}, when applied to a section of $E$. Using Equation \eqref{eq:commutatorEnd}, the last term in Equation \eqref{eq:alpha4} reads $\partial\circ\omega^{(k+1)}_L(a_1,\ldots,a_k,\ell)+\omega^{(k+1)}_L(a_1,\ldots,a_k,\ell)\circ\partial$. 

The family of forms $\alpha^{(k)}$ defines an element $\alpha=\sum_{k=0}^{\mathrm{rk}(A)}\alpha^{(k)}$ of total degree $+1$  in the bigraded vector space $\widehat{\Omega}(E)$. 
 The following notion connects this family of forms with the operators $D_A$ and $D_L$, by generalizing to the context of representations up to homotopy the notion of compatible $A$-connection introduced in \cite[Definition 2.2]{chenAtiyahClassesHomotopy2016}, and their relationship with the notion of Atiyah class \cite[Theorem 2.5]{chenAtiyahClassesHomotopy2016}.

\begin{definition}\label{def:compatible} Let $A\hookrightarrow L$ be a Lie pair and $(E,D_A)$ be a representation up to homotopy of~$A$.
An $L$-superconnection $D_L$ on $E$ is said to be \emph{homotopy $A$-compatible} if
\begin{enumerate}
\item it extends the given homotopy $A$-module structure on $E$, i.e. $D_L|_A=D_A$, and
\item the family of forms defined by Equations \eqref{eq:alpha0} and \eqref{eq:alpha4} satisfies $\alpha^{(k)}=0$, for all $k$. 
\end{enumerate}
\end{definition}




\begin{theorem}\label{main1}
Let $A\hookrightarrow L$ be a Lie pair and $(E,D_A)$ be a representation up to homotopy of $A$. Assume that  the $A$-superconnection $D_A$ is  extended to   
an $L$-superconnection $D_L$ on $E$. Then,
\begin{enumerate}
    \item The form $\alpha=\sum_k\alpha^{(k)}$ is a  cocycle of the cochain complex $\big(\widehat{\Omega}(E),s\big)$; 
    \item Its cohomology class $[\alpha]\in H\big(\widehat{\Omega}(E),s\big)$ does not depend on the choice of extension~$D_L$ of the $A$-superconnection $D_A$; 
    \item The class $[\alpha]$ vanishes if an only if there exists a homotopy $A$-compatible $L$-superconnection on $E$.
\end{enumerate}
\end{theorem}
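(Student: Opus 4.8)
The plan is to push the ``Bianchi identity'' of Lemma~\ref{prop:Bianchi} and the correction-term analysis of Proposition~\ref{proprestriction} down from the curvature operator $a=(D_L)^2$ on $\mathrm{End}(E)$ to the element $\alpha$ on $A^\circ\otimes\mathrm{End}(E)$, using throughout that $\alpha^{(k)}$ is by construction the restriction of $a^{(k+1)}$ in the sense of Equation~\eqref{eq:restriction} (legitimate because $a^{(k+1)}|_{\wedge^{k+1}A}=0$ by Equation~\eqref{eq:kojima}), together with the componentwise dictionary of Lemma~\ref{lemgringo}. For Part~(1), Lemma~\ref{prop:Bianchi} gives $\sum_{l=0}^{k}D^{(l)}a^{(k+1-l)}=0$ for every $k\ge 0$. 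Each summand $D^{(l)}a^{(m)}$ is again a form on $L$ vanishing on $\wedge^{\bullet}A$ (the observation made just before Lemma~\ref{lemgringo}), so the identity may be restricted through Equation~\eqref{eq:restriction}; by Lemma~\ref{lemgringo} the restriction of $D^{(l)}a^{(m)}$ is $s^{(l)}$ applied to the restriction $\alpha^{(m-1)}$ of $a^{(m)}$. Hence $\sum_{l=0}^{k}s^{(l)}\alpha^{(k-l)}=0$ for every $k$, which is exactly the form-degree-$k$ part of $s(\alpha)$; so $s(\alpha)=0$, and since $\alpha^{(k)}\in\Omega^{k}(A,A^\circ\otimes\mathrm{End}_{1-k}(E))$ has total degree $1$, $\alpha$ is an $s$-cocycle of total degree $+1$.

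For Part~(2), let $D_L,D_L'$ be two extensions of $D_A$ and, as in the proof of Proposition~\ref{proprestriction}, write $D_L-D_L'=\varphi\wedge\cdot$ with $\varphi=\sum_{k\ge 1}\varphi^{(k)}$, $\varphi^{(k)}\in\Omega^{k}(L,\mathrm{End}_{1-k}(E))$ of total degree $1$ and vanishing on $\wedge^{k}A$. A direct operator computation, using that $\varphi\wedge\cdot$ is $\Omega(L)$-linear and that $D_L$ and $\varphi$ are odd, gives
\begin{equation*}
(D_L')^2=(D_L)^2-[D_L,\varphi]+\varphi\wedge\varphi,\qquad\text{that is}\qquad a'=a-D(\varphi)+\varphi\wedge\varphi.
\end{equation*}
The crucial point is that $\varphi\wedge\varphi$ restricts to zero on $\wedge^{\bullet}A\otimes L/A$: in expanding $(\varphi^{(k)}\wedge\varphi^{(m)})(a_1,\dots,a_{k+m-1},\ell)$ the single lift $\ell$ enters exactly one of the two factors, leaving the other evaluated only on sections of $A$, where it vanishes. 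Restricting $a'-a$ then gives $\alpha'-\alpha=-s(\psi)$, with $\psi$ the restriction of $\varphi$, an element of $\widehat{\Omega}(E)$ of total degree $0$; hence $[\alpha]=[\alpha']$ in $H^1(\widehat{\Omega}(E),s)$.

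For Part~(3): if a homotopy $A$-compatible extension $D_L'$ exists, then by Definition~\ref{def:compatible} its associated $\alpha'$ vanishes, so $[\alpha]=[\alpha']=0$ by Part~(2). Conversely, assume $[\alpha]=0$, so $\alpha=s(\psi)$ for some $\psi=\sum_{k\ge 0}\psi^{(k)}\in\widehat{\Omega}(E)$ of total degree $0$ with $\psi^{(k)}\in\Omega^{k}(A,A^\circ\otimes\mathrm{End}_{-k}(E))$. Fixing a vector bundle splitting $L\simeq A\oplus L/A$, I extend each $\psi^{(k)}$ to a form $\varphi^{(k+1)}\in\Omega^{k+1}(L,\mathrm{End}_{-k}(E))$ that vanishes on $\wedge^{k+1}A$ and restricts back to $\psi^{(k)}$, and set $D_L':=D_L-\varphi\wedge\cdot$ with $\varphi=\sum_{k\ge 0}\varphi^{(k+1)}$. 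Since $\varphi$ vanishes on $\wedge^{\bullet}A$, $D_L'$ is again an $L$-superconnection with $D_L'|_A=D_A$, and the computation of Part~(2) gives $a'=a-D(\varphi)+\varphi\wedge\varphi$, whose restriction to $\wedge^{\bullet}A\otimes L/A$ is $\alpha-s(\psi)+0=0$. Thus all components of the curvature element of $D_L'$ vanish, i.e.\ $D_L'$ is homotopy $A$-compatible.

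I expect the main difficulty to be the sign and degree bookkeeping in Parts~(2)--(3): checking that in $a'=a-D(\varphi)+\varphi\wedge\varphi$ only the term $-D(\varphi)$ survives the restriction to $\wedge^{\bullet}A\otimes L/A$ (the quadratic term dying because a single lift $\ell$ can occupy only one slot of a product), verifying that the splitting-dependent extension $\psi\mapsto\varphi$ is a right inverse of the restriction map of Equation~\eqref{eq:restriction} compatibly with both the form and the endomorphism gradings, and carrying the graded commutators of Remark~\ref{remark1} through with the correct Koszul signs.
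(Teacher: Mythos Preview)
Your proposal is correct and follows essentially the same approach as the paper. The only difference is stylistic: for Parts~(2) and~(3) you use the compact operator identity $(D_L')^2=(D_L)^2-[D_L,\varphi]+\varphi\wedge\varphi$ directly, whereas the paper unpacks this into curvature components $R_L^{(k+1)}-R_L'^{(k+1)}$ term by term; both arguments hinge on the same two observations you single out, namely the Bianchi identity $D(a)=0$ restricted via Lemma~\ref{lemgringo} for Part~(1), and the vanishing of the quadratic term $\varphi\wedge\varphi$ upon restriction to $\wedge^{\bullet}A\otimes L/A$ for Parts~(2)--(3).
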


\begin{proof}

\textbf{Item 1:} 
Being a cocycle with respect to the total differential $s$ is equivalent to (see Figure~\ref{figure1})
\begin{equation*}
    \sum_{l=0}^k\,s^{(l)}\alpha^{(k-l)}=0.
\end{equation*}
This is a particular case of Equation \eqref{eq:curvature2}, once translated with Equation \eqref{eq:relation1}.

\begin{figure}[ht!]
  \centering
  \begin{tikzpicture}[scale=1]
    \coordinate (Origin)   at (0,0);
    \coordinate (XAxisMin) at (-1,0);
    \coordinate (XAxisMax) at (13,0);
    \coordinate (YAxisMin) at (6,0);
    \coordinate (YAxisMax) at (6,10);
    \draw [ultra thick, black,-latex] (XAxisMin) -- (XAxisMax) node [below] {\begin{tabular}[c]{@{}c@{}}endomorphism\\  degree\end{tabular}};
    \draw [ultra thick, black,-latex] (YAxisMin) -- (YAxisMax) node [above] {form degree};

    \clip (-1,-1) rectangle (14cm,9cm); 
    \coordinate (Btwo) at (-1,7);
    \coordinate (B2) at (10,2);
    \coordinate (B3) at (10,4);
    \coordinate (B4) at (12,2);
    \coordinate (B5) at (8,6);
    \coordinate (B6) at (6,8);
    \draw[style=help lines,dashed] (0,1) grid[step=2cm] (5,10);
    \draw[style=help lines,dashed] (7,1) grid[step=2cm] (12,10);
    \draw[style=help lines,dashed] (0,0) grid[step=2cm] (0,1);
    \draw[style=help lines,dashed] (2,0) grid[step=2cm] (2,1);
    \draw[style=help lines,dashed] (4,0) grid[step=2cm] (4,1);
    \draw[style=help lines,dashed] (8,0) grid[step=2cm] (8,1);
    \draw[style=help lines,dashed] (10,0) grid[step=2cm] (10,1);
    \draw[style=help lines,dashed] (12,0) grid[step=2cm] (12,1);
    \draw[style=help lines,dashed] (5,2) grid[step=2cm] (7,2);
    \draw[style=help lines,dashed] (5,4) grid[step=2cm] (7,4);
    \draw[style=help lines,dashed] (5,6) grid[step=2cm] (7,6);
    \draw[style=help lines,dashed] (5,8) grid[step=2cm] (7,8);
    \foreach \x in {0,1,...,5}{
      \foreach \y in {1,2,...,5}{
        \node[draw,circle,inner sep=1pt,fill] at (2*\x,2*\y) {};
        \node[draw,circle,inner sep=1pt,fill,red] at (0,6){};
        \node[draw,circle,inner sep=1pt,fill,red] at (6,0){};
         \node[draw,circle,inner sep=1pt,fill,blue] at (4,4){};
          \node[draw,circle,inner sep=1pt,fill,blue] at (6,2){};
           \node[draw,circle,inner sep=1pt,fill,blue] at (8,0){};
        \node[draw,circle,inner sep=1pt,fill,blue] at (2,6){};
        \node[draw,circle,inner sep=1pt,fill,blue] at (0,8){};
        \node[draw,circle,inner sep=1pt,fill,red] at (2,4){};
         \node[draw,circle,inner sep=1pt,fill,red] at (4,2){};
         \node[draw,circle,inner sep=1pt,fill,red] at (6,0){};
       \node[draw,circle,inner sep=1pt,fill,DarkGreen] at (B2){};
      }
    }
    \node [below] at (6,0)  {$0$};
    \node [below] at (4,0)  {$-1$};
    \node [below] at (2,0)  {$-2$};
    \node [below] at (8,0)  {$1$};
    \node [below] at (10,0)  {$2$};
    \node [below left] at (6,2)  {$1$};
    \node [below left] at (6,4)  {$2$};
    \node [below left] at (6,6)  {$3$};
    
    \draw [ultra thick,red] (Btwo) -- (6,0);
    \node [below left,red] at (3,3) {\large $\phi$};
    \draw [ultra thick,DarkGreen, -latex] (B2) -- (B4);
    \draw [ultra thick,DarkGreen, -latex] (B2) -- (B3);
    \draw [ultra thick,DarkGreen, -latex] (B2) -- (B5);
    \draw [ultra thick,DarkGreen, -latex] (B2) -- (B6);
    \node [below right,DarkGreen] at (11,2) {\large $s^{(0)}=[\partial,.\,]$};
    \node [right,DarkGreen] at (10,3) {\large $s^{(1)}=d^{\widehat{\nabla}}_A$};
    \node [above,DarkGreen] at (9,5) {\large $s^{(2)}$};
     \node [right,DarkGreen] at (7,7) {\large $s^{(3)}$};
    \draw [ultra thick,blue] (-1,9)
        -- (8,0);
    \node [above right,blue] at (5,3) {\large $\alpha$};
  \end{tikzpicture}
\caption{Representation of the bi-graded space $\widehat{\Omega}(E)=\Omega^{\bullet}(A,A^\circ\otimes \mathrm{End}_\bullet(E))$ and an~Atiyah cocycle $\alpha$ of total degree $+1$, together with an element $\phi$ of total degree $0$. The component $\alpha^{(p)}$ sits at the node of coordinates $(1-p,p)$.}   \label{figure1}
\end{figure}
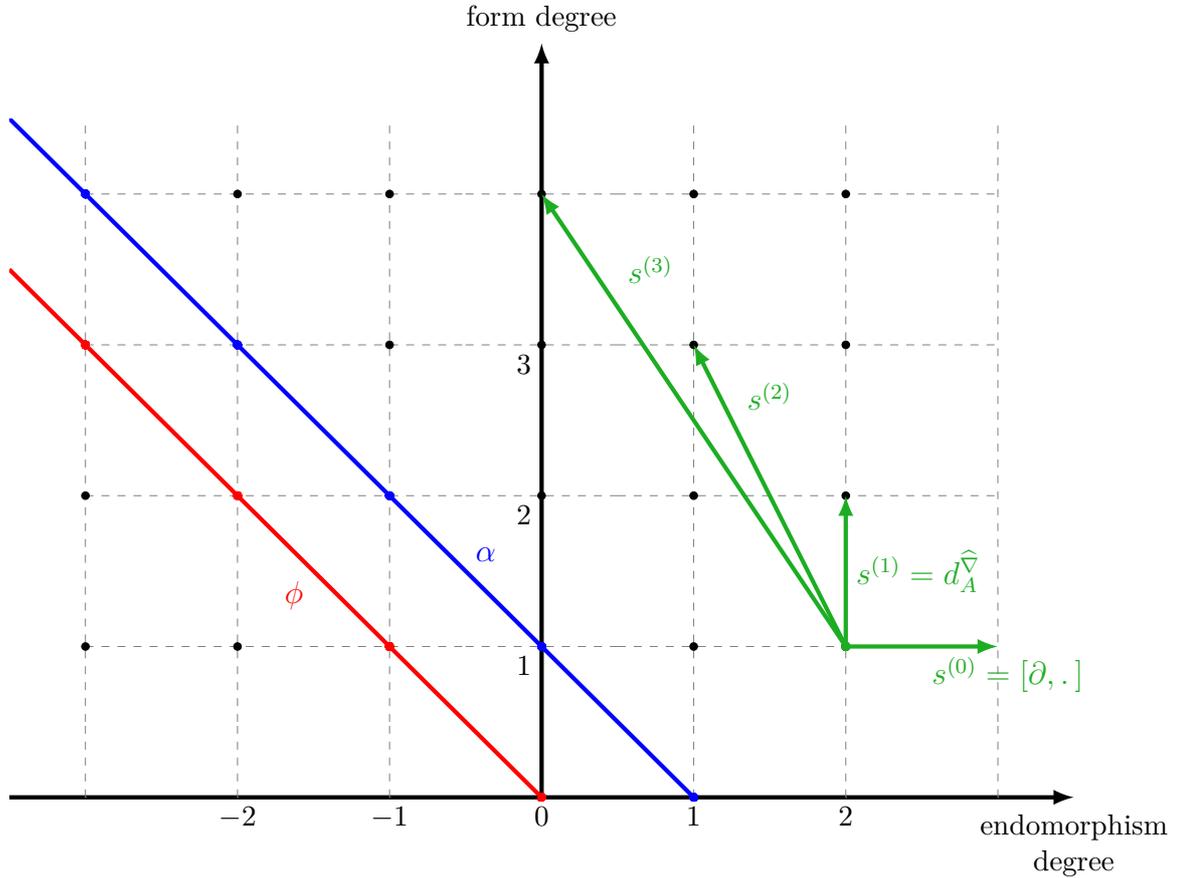

\textbf{Item 2:} Let $D_L'$ be another choice of extension to $L$ of the $A$-superconnection $D_A$.
For every $2\leq k\leq \mathrm{rk}(A)$, let $\omega_L'^{(k)}$ be the associated connection $k$-form, which thus extends the connection $k$-form $\omega_A^{(k)}$. Then, for every $k\geq2$ the difference $\varphi^{(k)}=\omega_L^{(k)}-\omega_L'^{(k)}$ vanishes on  $\wedge^{k}A$. Furthermore, we denote by $\nabla'$ the Lie algebroid $L$-connection on $E$ associated to first component of $D_L'$, that is: $d_L^{\nabla'}=D_L'^{(1)}$. 
We then set $\varphi^{(1)}=\nabla-\nabla'$, so that $d^{\nabla'}_L=d^{\nabla}_L-\varphi^{(1)}\wedge\cdot$, and $\varphi^{(1)}|_A=0$.  Eventually, denote $\phi^{(k)}$ the $k$-form on $A$ obtained from $\varphi^{(k+1)}$ through Equation \eqref{eq:restriction}; it takes values in $A^\circ\otimes\mathrm{End}_{1-k}(E)$. 

Let us denote $a'^{(k+1)}$  the $\mathrm{End}(E)$-valued $k+1$-form on $L$ associated to $D_L'$ through Equation \eqref{eq:aD}, and let $\alpha'^{(k)}$ be the corresponding $k$-form on $A$ obtained from the former via Equations~\eqref{eq:alpha0} and \eqref{eq:alpha4}.  Finally, let $b^{(k+1)}=a^{(k+1)}-a'^{(k+1)}$ and $\beta^{(k)}=\alpha^{(k)}-\alpha'^{(k)}$, for every $0\leq k\leq\mathrm{rk}(A)$. By construction, $b^{(k+1)}$ vanishes on $\wedge^{k+1}A$, and its associated $k$-form on $A$  induced through Equation \eqref{eq:restriction} is $\beta^{(k)}$:
\begin{equation}\label{eq:restrict1}
\beta^{(k)}(a_1,\ldots,a_k;l)=b^{(k+1)}(a_1,\ldots,a_k,\ell).
\end{equation}
Setting $\beta=\sum_k\beta^{(k)}$, we now prove that $\beta=s(\phi)$. From Equation \eqref{eq:Dcurvature0}, we have 
\begin{equation}\label{eq:karte1}
b^{(1)}=[\partial, d^{\nabla}_L]-[\partial, d^{\nabla'}_L]=[\partial,\varphi^{(1)}\wedge].
\end{equation}
This $\mathrm{End}(E)$-valued $L$-form vanishes on $A$, and its restriction through Equation \eqref{eq:restrict1} is~$\beta^{(0)}$.
Applying Equation \eqref{eq:relation1} to the right-hand side of Equation \eqref{eq:karte1}, we deduce that $\beta^{(0)}=s^{(0)}(\phi^{(0)})$.

For $1\leq k\leq\mathrm{rk}(A)$, let us denote $R^{(k+1)}_L$ (resp. $R'^{(k+1)}_L$) the curvature $k+1$-form associated to the connection $k$-form $\omega^{(k)}_L$ (resp. $\omega'^{(k)}_L$) via Equation \eqref{eq:curvatureL}. Then we have
\begin{align*}
R^{(k+1)}_L-R'^{(k+1)}_L&=d_L\left(\omega^{(k)}_L-\omega'^{(k)}_L\right)+\underset{s+t=k+1}{\sum_{1\leq s, t\leq k}}\ \omega^{(s)}_L\wedge\omega^{(t)}_L-\omega'^{(s)}_L\wedge\omega'^{(t)}_L\\
&=d_L\varphi^{(k)}+\underset{s+t=k+1}{\sum_{1\leq s, t\leq k}}\ \varphi^{(s)}\wedge\omega'^{(t)}_L+\omega^{(s)}_L\wedge\varphi^{(t)}\\
&=\sum_{j=1}^{k}D'^{(j)}(\varphi^{(k-j+1)})+\underset{s+t=k+1}{\sum_{1\leq s, t\leq k}}\ \varphi^{(s)}\wedge\varphi^{(t)}.
\end{align*}
The definitions of $a^{(k+1)}$ and $a'^{(k+1)}$ then establish that
\begin{equation*}
b^{(k+1)}=\sum_{j=0}^{k}D'^{(j)}(\varphi^{(k-j+1)})+\underset{s+t=k+1}{\sum_{1\leq s, t\leq k}}\ \varphi^{(s)}\wedge\varphi^{(t)}.
\end{equation*}
Applying these terms to $(a_1,\ldots,a_k,\ell)$, we observe that the last sum identically vanishes as $\varphi^{(s)}\big|_{\wedge^sA}=0$ for every $s$, while, using Equation \eqref{eq:relation1}, together with Proposition \ref{proprestriction}, each term of the first sum reads
 \begin{equation*}
D'^{(j)}(\varphi^{(k-j+1)})(a_1,\ldots,a_k,\ell)=s^{(j)}\phi^{(k-j)}(a_1,\ldots,a_k;l), \qquad\text{for $0\leq j\leq k$.}
\end{equation*}
Thus, from Equation \eqref{eq:restrict1}, we deduce that $\beta^{(k)}=\sum_{j=0}^{k}s^{(j)}(\phi^{(k-j)})$.
The result being true for every $1\leq k\leq \mathrm{rk}(A)$, it implies that $\alpha=\alpha'+s(\phi)$, i.e. the cohomology class of $\alpha$  does not depend on the choice of extension of the connection $k$-forms to $L$.

\textbf{Item 3:} The Atiyah cocycle $\alpha$ being $s$-exact means that there exists a family of forms $\phi^{(k)}\in\Omega^{k}(A,A^{\circ}\otimes\mathrm{End}_{-k}(E))$ (of total degree 0) such that the form
\begin{equation}\label{eq:equationexactzero}
    \phi=\sum_{k=0}^{\mathrm{rk}(A)}\phi^{(k)}
\end{equation}
satisfies the identity
\begin{equation}\alpha=s(\phi).\label{eq:equationexact}
\end{equation}
As usual, in the sum of Equation \eqref{eq:equationexactzero} we used the convention introduced in Remark \ref{remarksevere}. 

By definition, $\phi^{(0)}\in\Gamma(A^\circ\otimes \mathrm{End}_0(E))$. 
Using \eqref{eq:restriction}, it canonically induces a 1-form $\varphi^{(1)}\in\Omega^1(L,\mathrm{End}_0(E))$  of  total degree $+1$, which vanishes on $A$. More precisely, letting $\ell$ be a preimage in $\Gamma(L)$ of $l\in \Gamma(L/A)$, then 
\begin{equation*}
\varphi^{(1)}(\ell)=\phi^{(0)}(l).
\end{equation*} Any other choice of pre-image $\ell'$ for $l$ does not change the right-hand side, since the difference $\ell'-\ell$ sits in $\Gamma(A)$.
Let $\nabla$ be the Lie algebroid $A$-connection on $E$ corresponding to the given homotopy $A$-module structure on $E$; we denote its extension to $L$ with the same symbol. Then,  $\nabla'=\nabla-\varphi^{(1)}$ is another Lie algebroid $L$-connection on $E$ extending the Lie algebroid $A$-connection, since $\varphi^{(1)}\big|_A=0$. The associated exterior covariant derivatives satisfy $d^{\nabla'}_L=d^\nabla_L-\varphi^{(1)}\wedge\cdot$.

More generally, for every $k\geq2$, every $k-1$-form $\phi^{(k-1)}$  induces a  (non-necessarily unique) $k$-form $\varphi^{(k)}$ on $L$ taking values in $\mathrm{End}_{1-k}(E)$ (hence, of total degree $+1$) satisfying the identity
\begin{equation*}
    \varphi^{(k)}(a_1,\ldots,a_{k-1},\ell)=\phi^{(k-1)}(a_1,\ldots,a_{k-1};l),
\end{equation*}
for every $a_1,\ldots, a_{k-1}\in \Gamma(A)$ and $\ell\in\Gamma(L)$ with projection $l$ in $\Gamma(L/A)$.
In particular, we have that $\varphi^{(k)}\big|_{\wedge^{k}A}=0$, and 
we set $\omega'^{(k)}_L=\omega^{(k)}_L-\varphi^{(k)}$, so that they coincide whenever restricted to $A$. These connection $k$-forms, together with $d^{\nabla'}_L$, then define a new $L$-superconnection $D'_L$ on $E$ which  extends the $A$-superconnection $D_A$. Let
$\alpha'=\sum_{k\geq 0}\alpha'^{(k)}$ be the  cocycle associated to this newly defined $L$-superconnection $D_L'$. The fact that
\begin{equation*}
    \alpha'^{(k)}=0,\qquad \text{for every $k\geq0$,}
\end{equation*}
can be shown by properly writing the arguments. 

For $k=0$, by definition we have $\alpha^{(0)}(l)=a^{(1)}(\ell)$, i.e. using Equation \eqref{eq:Dcurvature0}, it reads 
\begin{equation}\label{eqzooro}
s^{(0)}(\phi^{(0)})(l)= [\partial, d_L^\nabla](\ell).
\end{equation}
Then, using Equation \eqref{eq:relation1} on the left-hand side, Equation \eqref{eqzooro} can be recast as
\begin{equation}\label{eq:orwell}
[\partial,\varphi^{(1)}\wedge\cdot](\ell)=   [\partial, d_L^\nabla](\ell).
\end{equation}
Since $d^{\nabla'}_L=d^\nabla_L-\varphi^{(1)}\wedge\cdot$, we obtain that Equation \eqref{eq:orwell} reads
\begin{equation*}
   [\partial, d_L^{\nabla'}](\ell)= 0.
\end{equation*}
From Equation \eqref{eq:Dcurvature0}, it means that $a'^{(1)}(\ell)=0$, and since $\alpha'^{(0)}(l)=a'^{(1)}(\ell)$, this identity establishes that $\alpha'^{(0)}=0$.

Let $1\leq k\leq \mathrm{rk}(A)$.
The $k+1$-curvature associated to the connection $k$-form $\omega'^{(k)}_L$ has been given in Equation \eqref{eq:curvatureL}:
\begin{align*}
R'^{(k+1)}_L
&=d_L\omega'^{(k)}_L+\underset{s+t=k+1}{\sum_{1\leq s, t\leq k}}\ \omega'^{(s)}_L\wedge\omega'^{(t)}_L\\
&=R^{(k+1)}_L-d_L\varphi^{(k)} -\underset{s+t=k+1}{\sum_{1\leq s, t\leq k}}\ \varphi^{(s)}\wedge\omega^{(t)}_L +\omega^{(s)}_L\wedge\varphi^{(t)}-\varphi^{(s)}\wedge\varphi^{(t)}\\
&=R^{(k+1)}_L-\sum_{j=1}^kD^{(j)}(\varphi^{(k-j+1)})+\underset{s+t=k+1}{\sum_{1\leq s, t\leq k}}\ \varphi^{(s)}\wedge\varphi^{(t)}.
\end{align*}
From this identity, we deduce that $a'^{(k+1)}=R'^{(k+1)}_L+[\partial,\omega'^{(k+1)}_L]$ is equal to:
\begin{equation*}
a'^{(k+1)}=a^{(k+1)}-\sum_{j=0}^kD^{(j)}(\varphi^{(k-j+1)})+\underset{s+t=k+1}{\sum_{1\leq s, t\leq k}}\ \varphi^{(s)}\wedge\varphi^{(t)}.
\end{equation*}
When this identity is applied to a tuple $(a_1,\ldots, a_k,\ell)$, the last sum identically vanishes since $\varphi\big|_{\wedge^sA}=0$ for every $1\leq s\leq k$, so that we obtain
\begin{equation*}
a'^{(k+1)}(a_1,\ldots, a_k,\ell)=a^{(k+1)}(a_1,\ldots, a_k,\ell)-\sum_{j=0}^kD^{(j)}(\varphi^{(k-j+1)})(a_1,\ldots, a_k,\ell).
\end{equation*}
Using Equation \eqref{eq:restriction}, we have 
\begin{equation*}
\alpha'^{(k)}(a_1,\ldots, a_k;l)=\alpha^{(k)}(a_1,\ldots, a_k;l)-\sum_{j=0}^ks^{(j)}(\phi^{(k-j)})(a_1,\ldots, a_k;l).
\end{equation*}
We recognize on the right-hand side the expression of Equation \eqref{eq:equationexact} at form degree $k$, 
so we deduce that $\alpha'^{(k)}=0$.  The result being true for every $1\leq k\leq \mathrm{rk}(A)$, we have that $\alpha'=0$, as desired.
\end{proof}

The cocycle $\alpha$ will be henceforth called \textit{Atiyah cocycle} associated to the homotopy $A$-module $E$ with respect to the Lie pair $A\hookrightarrow L$, and the corresponding class $[\alpha]$ will be called \emph{Atiyah class} of $E$.

\begin{proposition}\label{propisom}
Let $E$ and $F$ be two isomorphic representations up to homotopy of~$A$, and denote their respective Atiyah cocycles by $\alpha_E$ and $\alpha_F$ (with respect to extensions to $L$  of their respective $A$-superconnections). There exists an isomorphism in cohomology mapping $[\alpha_E]$ to $[\alpha_F]$.
\end{proposition}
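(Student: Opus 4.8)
The plan is to transport, through the given isomorphism, the whole complex $\big(\widehat{\Omega}(E),s\big)$ together with a choice of $L$-extension, and to check that the Atiyah cocycle is carried along. Write the isomorphism of homotopy $A$-modules as $\Phi\colon\Omega(A,E)_\blacktriangle\to\Omega(A,F)_\blacktriangle$ with inverse $\Psi$, so that $\Phi D_E=D_F\Phi$ and $\Psi D_F=D_E\Psi$. First I would observe that conjugation $c_\Phi(\eta):=\Phi\circ\eta\circ\Psi$ is a total-degree-preserving $\Omega(A)$-linear bijection $\Omega^\bullet(A,\mathrm{End}_\bullet(E))\to\Omega^\bullet(A,\mathrm{End}_\bullet(F))$, and that $[D_F,\Phi\circ\eta\circ\Psi]=\Phi\circ[D_E,\eta]\circ\Psi$, so that $c_\Phi$ is an isomorphism of the homotopy $A$-modules $\mathrm{End}(E)$ and $\mathrm{End}(F)$ (equipped with the adjoint superconnections $[D_E,\cdot\,]$ and $[D_F,\cdot\,]$). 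Tensoring with the identity of the Bott module $A^\circ$ produces an isomorphism of homotopy $A$-modules $\chi:=c_\Phi\otimes\mathrm{id}_{A^\circ}$ from $A^\circ\otimes\mathrm{End}(E)$ to $A^\circ\otimes\mathrm{End}(F)$, hence an isomorphism of the cochain complexes $\big(\widehat{\Omega}(E),s\big)$ and $\big(\widehat{\Omega}(F),s\big)$ of Proposition~\ref{prophomologys}, and thus an isomorphism $\chi_*$ in cohomology. The task is then to prove $\chi_*[\alpha_E]=[\alpha_F]$.

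Since by Theorem~\ref{main1}(2) the classes $[\alpha_E]$ and $[\alpha_F]$ do not depend on the chosen $L$-extensions, I would recompute them with convenient choices. Fix any $L$-superconnection $D_L^E$ extending $D_E$, with curvature $a_E=(D_L^E)^2$ and Atiyah cocycle $\alpha_E$. Writing $\Phi=\sum_k\Phi^{(k)}$ with $\Phi^{(k)}\in\Omega^k(A,\mathrm{Hom}_{-k}(E,F))$, I would extend each form $\Phi^{(k)}$ to an $L$-form (forms on $A$ always extend to forms on $L$) and assemble an $\Omega(L)$-linear map $\widetilde{\Phi}\colon\Omega(L,E)\to\Omega(L,F)$ with $\iota^*\circ\widetilde{\Phi}=\Phi\circ\iota^*$. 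Its degree-zero component is the vector bundle isomorphism $\Phi^{(0)}$ and the remainder has positive $L$-form degree, hence is nilpotent, so $\widetilde{\Phi}$ is invertible; applying $\iota^*$ to $\widetilde{\Phi}\circ\widetilde{\Phi}^{-1}=\mathrm{id}$ and using $\iota^*\circ\widetilde{\Phi}=\Phi\circ\iota^*$ gives $\iota^*\circ\widetilde{\Phi}^{-1}=\Psi\circ\iota^*$. Then $D_L^F:=\widetilde{\Phi}\circ D_L^E\circ\widetilde{\Phi}^{-1}$ is an $L$-superconnection on $F$ (the Leibniz rule survives conjugation by an $\Omega(L)$-linear isomorphism), and $\iota^*\circ D_L^F=\Phi\circ D_E\circ\Psi\circ\iota^*=D_F\circ\iota^*$, so it extends $D_F$; let $\alpha_F$ be its Atiyah cocycle.

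It remains to compare $\alpha_F$ and $\chi(\alpha_E)$. One has $a_F=(D_L^F)^2=\widetilde{\Phi}\circ(D_L^E)^2\circ\widetilde{\Phi}^{-1}=\widetilde{\Phi}\,a_E\,\widetilde{\Phi}^{-1}$ in $\Omega(L,\mathrm{End}(F))$, and I would restrict this to $\wedge^\bullet A\otimes L/A$ as in Equation~\eqref{eq:restriction}: evaluating $\widetilde{\Phi}\,a_E\,\widetilde{\Phi}^{-1}$ on $a_1,\dots,a_k\in\Gamma(A)$ and a preimage $\ell$ of $l\in\Gamma(L/A)$, every term in which $\ell$ is fed to $\widetilde{\Phi}$ or to $\widetilde{\Phi}^{-1}$ vanishes, because $a_E$ is then evaluated on sections of $A$ only and vanishes by Equation~\eqref{eq:kojima}; in the surviving terms $\widetilde{\Phi}$ and $\widetilde{\Phi}^{-1}$ are evaluated on sections of $A$, where they coincide with $\Phi$ and $\Psi$, and the factor $a_E(\dots,\ell,\dots)$ becomes, by Equation~\eqref{eq:restriction}, the matching component of $\alpha_E$, so the whole sum reassembles into $c_\Phi\big(\alpha_E(\cdot\,;l)\big)$ evaluated on $a_1,\dots,a_k$, i.e. $\chi(\alpha_E)(a_1,\dots,a_k;l)$. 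This gives $\alpha_F=\chi(\alpha_E)$ and hence $\chi_*[\alpha_E]=[\alpha_F]$. I expect the main obstacle to be exactly this last step — getting the Koszul signs and the combinatorial weights coming from the wedge products to reassemble precisely into $c_\Phi$ applied to $\alpha_E$ — together with the (less serious but slightly subtle) point that $\widetilde{\Phi}^{-1}$, though produced by a geometric series whose higher terms do not vanish on $A$, nonetheless restricts along $\iota^*$ to $\Psi$. The remaining ingredients — invertibility of $\widetilde{\Phi}$, the extension properties of $D_L^F$, and the fact that $\chi$ is a chain isomorphism — are formal.
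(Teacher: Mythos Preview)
Your proof is correct and follows essentially the same strategy as the paper: build an isomorphism of homotopy $A$-modules $\mathrm{End}(E)\to\mathrm{End}(F)$ (the paper writes it as $\Phi\otimes\Psi^*$ via $\mathrm{End}(E)\simeq E\otimes E^*$, you write it as conjugation $c_\Phi$; these are the same map), tensor with $\mathrm{id}_{A^\circ}$, and conclude. Where you go beyond the paper is in actually verifying that the Atiyah cocycle is carried along: the paper simply asserts this, whereas you produce a specific $L$-extension $D_L^F=\widetilde{\Phi}D_L^E\widetilde{\Phi}^{-1}$ and check that the restriction of $a_F=\widetilde{\Phi}a_E\widetilde{\Phi}^{-1}$ to $\wedge^\bullet A\otimes L/A$ is $\chi(\alpha_E)$ via the vanishing \eqref{eq:kojima}. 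This is a genuine (and welcome) addition; the shuffle/sign bookkeeping you flag is routine, since fixing $\ell$ in the $a_E$-slot reduces a $(p,q,r)$-shuffle on $k+1$ arguments to a $(p,q-1,r)$-shuffle on $k$ arguments with the same sign conventions.
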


\begin{proof}
Let $\Phi:\Omega(A,E)_\blacktriangle\to\Omega(A,F)_\blacktriangle$ and $\Psi:\Omega(A,F)_\blacktriangle\to\Omega(A,E)_\blacktriangle$ be two morphisms of homotopy $A$-modules such that $\Phi\circ\Psi=\mathrm{id}_{\Omega(A,F)}$ and $\Psi\circ\Phi=\mathrm{id}_{\Omega(A,E)}$. Recall that in that case the graded vector bundles $E$ and $F$ are isomorphic, i.e. that $E_i\simeq F_i$ for every $i\in\mathbb{Z}$.
The dual morphism $\Psi^*:\Omega(A,E^*)_\blacktriangle\to\Omega(A,F^*)_\blacktriangle$ is an isomorphism between the homotopy $A$-modules $E^*$ and $F^*$. By the isomorphism $\mathrm{End}_i(E)\simeq\bigoplus_{j+k=i}E_j\otimes E^*_k$, 
the tensor product $\Phi\otimes\Psi^*$ gives rise to an isomorphism of homotopy $A$-modules $\Omega(A,\mathrm{End}(E))_\blacktriangle\to\Omega(A,\mathrm{End}(F))_\blacktriangle$.

From this, we deduce that $(\widehat{\Omega}(E),s_E)$ and $(\widehat{\Omega}(F),s_F)$ are isomorphic as differential graded $\Omega(A)$-modules, through a $\Omega(A)$-linear map $\Xi:\widehat{\Omega}(E)\to \widehat{\Omega}(F)$ of total degree 0. This shows that the cohomologies of $s_E$ and $s_F$ are in bijection. Even more, since the Atiyah cocycle $\alpha_E$ is mapped through $\Xi$ to the Atiyah cocycle $\alpha_F$, we deduce that the Atiyah class $[\alpha_E]$ is mapped through $\Xi$ to the Atiyah class $[\alpha_F]$, hence the result.
\end{proof}

\subsection{Relationship with Atiyah classes of Lie algebroid pairs}\label{subway}

Although Proposition \ref{propisom} sheds light on Atiyah classes of isomorphic representations up to homotopy, it is too restrictive for establishing a relationship between the Atiyah class of a representation up to homotopy $E$ of a Lie algebroid $A$ and that of a Lie algebroid representation $K$ of $A$. This is the objective of the present section, whenever the homotopy $A$-module $E$ is a resolution of some $K$.

In this section we assume that the  representation up to homotopy $E$ of the Lie algebroid~$A$ is only graded over non-positive degrees, i.e $E_\bullet=\bigoplus_{-n\leq i\leq0}E_{i}$ for some $n\geq0$ -- so in particular the amplitude of $E$ is $n+1$.
Assume  moreover that the chain complex $(E,\partial)$ is \emph{regular}, i.e. that for every $i\leq0$, the vector bundle morphism $\partial:E_{i}\to E_{i+1}$ has constant rank. This assumption allows to define the cohomology of the chain complex $(E,\partial)$ as the following quotient vector bundle:
\begin{equation*}
\mathcal{H}^i(E,\partial)=\bigslant{\mathrm{Ker}(\partial:E_i\to E_{i+1})}{\mathrm{Im}(\partial:E_{i-1}\to E_i)}.
\end{equation*}
Then,  
$\mathcal{H}^{\bullet}(E,\partial)$ is a graded vector bundle, which can be equipped with the zero differential. 

Assume now that the cohomology $\mathcal{H}^\bullet(E,\partial)$ is concentrated in degree 0,  and set $K=\mathcal{H}^0(E,\partial)=E_0/\mathrm{Im}(\partial:E_{-1}\to E_0)$. In other words, the chain complex $(E,\partial)$ is a resolution of the vector bundle $K$. We denote $\phi:E\to K$ the vector bundle morphism satisfying the chain map condition projecting $E$ to its cohomology:

\begin{center}
\begin{tikzcd}[column sep=2cm,row sep=1.5cm]
  \ldots\ar[r,  "\partial"]&E_{-2}\ar[d,"\phi" left]\ar[r,  "\partial"]&E_{-1}\ar[d,"\phi" left] \ar[r,  "\partial"]& E_{0}\ar[d,"\phi" left] \ar[r,  "0"]& 0 \\
 \ldots\ar[r, dashed,  "0"]&0\ar[r, dashed,  "0"]&0\ar[r, dashed,  "0"]& K\ar[r, dashed,  "0"]& 0 
\end{tikzcd} 
\end{center}

The $A$-superconnection $D_A$ on $E$ descends to a Lie algebroid $A$-connection $\nabla^K$ on~$K$ via the formula
\begin{equation}
    d_A^{\nabla^K}(\overline{e})=\overline{D_A(e)},\label{eqproj}
\end{equation}
where $\overline{e}\in\mathcal{H}^0(E,\partial)=K$ denotes the cohomology class of an element $e\in\Gamma(E_0)$. 
Since the differential in the complex $K$ is the zero differential, the representation up to homotopy on $E$ descends to a Lie algebroid representation on $K$, i.e.  the connection $\nabla^K$ is flat, and there are no higher connection forms. More generally, the extension to $L$ of the superconnection $D_A$ is a $L$-superconnection $D_L$ on $E$ which defines a Lie algebroid connection $\nabla$ of $L$ on $K$, via an identity similar to Equation \eqref{eqproj}:
\begin{equation}
    d_L^{\nabla}(\overline{e})=\overline{D_L(e)}.\label{eqproj2}
\end{equation} By construction, the connection $\nabla$ is an extension of the aforementioned $A$-connection $\nabla^K$. Any other choice of $L$-superconnection $D_L$ defines another $L$-connection $\nabla'$, such that the difference $\nabla-\nabla'$ is a one form on $L$ which sits actually in $A^\circ$.


Recall that $\big(\mathrm{End}_\bullet(E), [\partial,\cdot\,]\big)$ is a chain complex concentrated in degrees $-n,\ldots, n$, of which we denote the cohomology spaces $\mathcal{H}^{i}\big(\mathrm{End}_\bullet(E),[\partial,\cdot\,]\big)$, where the index $i$ ranges from  $-n$ to $n$. In other words:
\begin{equation*}
\mathcal{H}^{i}\big(\mathrm{End}_\bullet(E),[\partial,\cdot\,]\big)=\frac{\mathrm{Ker}\big([\partial,\cdot\,]\colon\mathrm{End}_i(E)\to\mathrm{End}_{i+1}(E)\big)}{\mathrm{Im}\big([\partial,\cdot\,]\colon\mathrm{End}_{i-1}(E)\to\mathrm{End}_{i}(E)\big)},
\end{equation*}
with the understanding that $\mathrm{End}_{-n-1}(E)=0$ and $\mathrm{End}_{n+1}(E)=0$.

\begin{proposition}\label{prop:quasiso}
The cohomology of the chain complex $\big(\mathrm{End}_\bullet(E), [\partial,\cdot\,]\big)$ satisfies
\begin{align}
\mathcal{H}^{0}\big(\mathrm{End}_\bullet(E),[\partial,\cdot\,]\big)&\simeq\mathrm{End}(K),\label{eqlemma1}\\
\mathcal{H}^{i}\big(\mathrm{End}_\bullet(E),[\partial,\cdot\,]\big)&=0\qquad\qquad\text{for every $i\neq0$}.\label{eqlemma2}
\end{align}
\end{proposition}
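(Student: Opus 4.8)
The plan is to show that the chain complex $\big(\mathrm{End}_\bullet(E),[\partial,\cdot\,]\big)$ is a resolution of $\mathrm{End}(K)$ by exhibiting it as (a direct sum of pieces of) the tensor product of a resolution of $K$ with a resolution of $K^*$, and then invoking the standard homological fact that the tensor product of resolutions is a resolution of the tensor product. First I would recall the canonical identification $\mathrm{End}_\bullet(E)=\bigoplus_{j+k=\bullet}E_j\otimes E_k^*$, so that the differential $[\partial,\cdot\,]$ decomposes as $\partial\otimes\mathrm{id}\pm\mathrm{id}\otimes\partial^*$, where $\partial^*:E_{k}^*\to E_{k-1}^*$ is the dual map. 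Since $(E_\bullet,\partial)$ is a regular resolution of $K$ concentrated in non-positive degrees, its dual $(E_\bullet^*,\partial^*)$ is a regular complex concentrated in non-negative degrees, and I need to check that it is a resolution of $K^*$, i.e. that $\mathcal{H}^0(E_\bullet^*,\partial^*)=K^*$ and $\mathcal{H}^i(E_\bullet^*,\partial^*)=0$ for $i\neq 0$. This is where regularity is essential: over each point the complex of fibres is a complex of finite-dimensional vector spaces, exactness is preserved under dualization, and the constant-rank hypothesis guarantees that kernels, images and cohomology are vector bundles (so that fibrewise statements globalize). The identification $\mathcal{H}^0(E^*,\partial^*)\simeq\big(\mathcal{H}^0(E,\partial)\big)^*=K^*$ comes from the left-exactness of the dual functor applied to $E_{-1}\xrightarrow{\partial}E_0\to K\to 0$.

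Having established that $(E_\bullet^*,\partial^*)$ is a resolution of $K^*$, I would then apply the Künneth-type statement for complexes of vector bundles: the tensor product complex $\big(E_\bullet\otimes E_\bullet^*, \partial\otimes\mathrm{id}\pm\mathrm{id}\otimes\partial^*\big)$ has cohomology $\mathcal{H}^\bullet(E,\partial)\otimes\mathcal{H}^\bullet(E^*,\partial^*)$, which by hypothesis is concentrated in degree $0$ and equals $K\otimes K^*=\mathrm{End}(K)$. Again, because everything is of constant rank, the fibrewise Künneth isomorphism (for bounded complexes of finite-dimensional vector spaces, where all Tor-terms vanish) assembles into an isomorphism of graded vector bundles, giving precisely Equations~\eqref{eqlemma1} and~\eqref{eqlemma2}. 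I would also make explicit the map realizing \eqref{eqlemma1}: a degree-$0$ endomorphism of $E$ commuting with $\partial$ descends, via the chain map $\phi:E\to K$, to an endomorphism of $K$, and conversely this is well defined modulo $[\partial,\cdot\,]$-exact terms; one checks this induced map $\mathcal{H}^0(\mathrm{End}(E),[\partial,\cdot\,])\to\mathrm{End}(K)$ is the Künneth isomorphism.

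An alternative, more hands-on route avoids the general Künneth machinery: choose (locally, using regularity, then patch or argue invariantly) a splitting $E_\bullet\simeq K\oplus \mathcal{C}_\bullet$ where $\mathcal{C}_\bullet$ is a contractible (acyclic, bounded) complex of vector bundles, with contracting homotopy $h$. Then $\mathrm{End}(E)$ splits accordingly into $\mathrm{End}(K)$ plus four blocks each involving $\mathcal{C}$ at least once, and the homotopy built from $h$ and $\mathrm{id}_K$ contracts all the $\mathcal{C}$-blocks; this directly yields \eqref{eqlemma1} and \eqref{eqlemma2}. I would likely present the tensor-product argument as the main line since it is coordinate-free, and mention the splitting argument as the conceptual reason.

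The main obstacle I anticipate is purely the bookkeeping around regularity: one must be careful that ``resolution'' for graded vector bundles genuinely behaves like a resolution in an abelian category, which is true only because the constant-rank assumption forces $\mathrm{Ker}\,\partial$, $\mathrm{Im}\,\partial$, $\mathrm{Ker}\,\partial^*$, $\mathrm{Im}\,\partial^*$ to all be honest subbundles, so that short exact sequences of such bundles split and dualization/tensoring are exact. Once this is granted, the cohomology computation is the standard fact that $\mathrm{Tor}$ vanishes for vector spaces, and there is no further difficulty; the sign conventions in $[\partial,\cdot\,]=\partial\otimes\mathrm{id}-(-1)^{(\cdot)}\mathrm{id}\otimes\partial^*$ need to be tracked but do not affect the cohomology.
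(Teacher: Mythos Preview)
Your proposal is correct, but the paper takes a different, more explicit route. Rather than passing through the identification $\mathrm{End}(E)\simeq E\otimes E^*$ and a K\"unneth argument, the paper works directly with the contracting homotopy $\theta:E_\bullet\to E_{\bullet-1}$ coming from the resolution structure (so $[\partial,\theta]=\mathrm{id}_E-\sigma\circ\phi$). For $i\geq 1$ and a $[\partial,\cdot\,]$-cocycle $f\in\mathrm{End}_i(E)$, one simply takes $g_+=(-1)^if\circ\theta$ and checks $[\partial,g_+]=f$ (using that $f\circ\sigma\circ\phi=0$ since $f(E_0)=0$); for $i\leq -1$ one takes $g_-=\theta\circ f$ and uses that $\phi\circ f=0$. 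For $i=0$, the paper interprets cocycles as chain self-maps and coboundaries as null-homotopies, then invokes the fundamental theorem of homological algebra to identify homotopy classes with $\mathrm{End}(K)$. This is essentially the hands-on version of your alternative splitting argument, made fully explicit.

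Your K\"unneth approach is cleaner and generalizes immediately (for instance to $\mathrm{Hom}(E,F)$ between two resolutions), and it packages the regularity hypothesis neatly as ``tensor and dual are exact on constant-rank complexes.'' The paper's approach, on the other hand, produces the explicit primitives $g_\pm$ in terms of $\theta$, which is useful later in the paper when similar contracting homotopies are tensored and perturbed (Section~4). So both are valid; yours is more conceptual, the paper's is more constructive.
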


\begin{proof}
Let us first prove the isomorphism \eqref{eqlemma1}.
Let $f\in\mathrm{End}_0(E)$ be a degree 0 graded vector bundle morphism $f=(f_i:E_i \to E_i)_{i\leq 0}$ closed under  the differential $[\partial, \cdot\,]$, i.e. $[\partial,f]=0$. This  means that  $f$ is a chain map from $(E_\bullet,\partial)$ to itself. Moreover, $f$ is a $[\partial,\cdot\,]$-coboundary, i.e. of the form $f=[\partial,h]$ for some vector bundle morphism $h=(h_i:E_i \to E_{i-1})_{i\leq 0}$, if and only if it is null-homotopic. From this, we deduce that $\mathcal{H}^{0}\big(\mathrm{End}_\bullet(E),[\partial,\cdot\,]\big)$ consists of homotopy equivalent chain maps of $E$ into itself.

Let $\overline{f_i}:\mathcal{H}^i(E_\bullet,\partial)\to \mathcal{H}^i(E_\bullet,\partial)$ be the vector bundle morphisms induced by the  chain map $f:E_\bullet\to E_\bullet$. Since the cohomology of $(E_\bullet,\partial)$ vanishes for every $i\neq0$, the only non-zero induced morphism is $\overline{f_0}:K\to K$. We call $\pi$ the linear map $\mathcal{H}^{0}\big(\mathrm{End}_\bullet(E),[\partial,\cdot\,]\big)\to\mathrm{End}(K),\; f\mapsto \overline{f_0}$. Let us show that it is a bijection.

For every vector bundle morphism $\overline{f}\in\mathrm{End}(K)$, the fundamental theorem of homological algebra \cite[Theorem 2.2.6]{weibelIntroductionHomologicalAlgebra1994} establishes that $f$ can be lifted to a chain map $f:E_\bullet\to E_\bullet$, and that two such choices of lifts are homotopy equivalent. Given the discussion above, this means that  every element of $\mathrm{End}(K)$ has a unique pre-image in $\mathcal{H}^{0}\big(\mathrm{End}_\bullet(E),[\partial,\cdot\,]\big)$, hence $\pi$ is bijective.

Let us turn to the second set of equalities \eqref{eqlemma2}. The chain complex $(E_\bullet,\partial)$ being a resolution of the vector bundle $K$ (of finite length), the fundamental theorem of homological algebra establishes that there exists a section $\sigma:K\to E$ of the chain complex morphism $\phi:E\to K$, together with a vector bundle map $\theta:E_\bullet\to E_{\bullet-1}$, so that $\phi:E\to K$ is a homotopy equivalence between $(E_\bullet, \partial)$ and $(K,0)$, with homotopy inverse $\sigma$ and homotopy operator~$\theta$:
\begin{center}
\begin{tikzcd}[column sep=2cm,row sep=1.5cm]
  \ldots\ar[r,  "\partial" below]&\ar[l,  "\theta" above, bend right]E_{-2}\ar[d,"\phi" left, shift right] \ar[r,  "\partial" below]& \ar[l,  "\theta" above, bend right]E_{-1}\ar[d,"\phi" left, shift right] \ar[r,  "\partial" below]& \ar[l,  "\theta" above, bend right]E_{0}\ar[d,"\phi" left, shift right]\ar[r,  "0" below]&0 \\
 \ldots\ar[r, dashed,  "0"]&0\ar[r, dashed,  "0"]\ar[u,"\sigma" right, shift right]& 0\ar[r, dashed,  "0"]\ar[u,"\sigma" right, shift right]& K\ar[r, dashed,  "0"]\ar[u,"\sigma" right, shift right]&0
\end{tikzcd} 
\end{center}
The vector bundle morphism $\theta$ is a differential 
 and is a contracting homotopy for $\mathrm{id}_E - \sigma\circ\phi$ with respect to the differential $\partial$:
 \begin{equation}
 [\partial,\theta]=\mathrm{id}_E - \sigma\circ\phi.\label{chainhomotop2}
 \end{equation}

 Let us compute $\mathcal{H}^{i}\big(\mathrm{End}_\bullet(E),[\partial,.\,]\big)$ for some $i\neq 0$. Let us set $i\geq 1$, so that $f\in\mathrm{End}_i(E)\simeq\mathrm{Hom}(E,E[i])$ is a vector bundle morphism from $E_\bullet$ to $E_{\bullet+i}$. In particular, $f(E_0)=0$, and necessarily $f\circ \sigma\circ \phi=0$. Assume that $f$ is a $[\partial,\cdot\,]$-cocycle, i.e. such that $[\partial, f]=0$ and set $g_+=(-1)^i f\circ\theta\in \mathrm{End}_{i-1}(E)$. Using the derivation property of the commutator $[\partial,.\,]$ on $g_+$ and Equation \eqref{chainhomotop2}, we then have
\begin{equation*}
    [\partial,g_+]=(-1)^i[\partial,f]\circ \theta+f\circ[\partial, \theta]=f-f\circ \sigma\circ \phi=f.
\end{equation*}
This shows that $f$ is a $[\partial,\cdot\,]$-coboundary, so $\mathcal{H}^{i}\big(\mathrm{End}_\bullet(E),[\partial,\cdot\,]\big)=0$.

With the same $i\geq 1$, a degree $-i$ endomorphism $f\in\mathrm{End}_{-i}(E)\simeq\mathrm{Hom}(E,E[-i])$ is a vector bundle morphism from $E_\bullet$ to $E_{\bullet-i}$. In particular, $E_0\cap\mathrm{Im}(f)=\emptyset$, and necessarily $\phi\circ f=0$. Assume that $f$ is a $[\partial,\cdot\,]$-cocycle, i.e. such that $[\partial, f]=0$ and define $g_-= \theta\circ f\in \mathrm{End}_{-i-1}(E)$. With the derivation property of $[\partial,.\,]$ and Equation \eqref{chainhomotop2}, we have
\begin{equation*}
    [\partial,g_-]=[\partial,\theta]\circ f-\theta\circ[\partial, f]=f- \sigma\circ \phi\circ f=f.
\end{equation*}
This shows that $f$ is a $[\partial,\cdot\,]$-coboundary, so $\mathcal{H}^{-i}\big(\mathrm{End}_\bullet(E),[\partial,\cdot\,]\big)=0$.
To conclude, $\mathcal{H}^{i}\big(\mathrm{End}_\bullet(E),[\partial,\cdot\,]\big)=0$ for every $i\neq 0$, hence proving the second claim. \end{proof}


Recall from Proposition \ref{prophomologys} that $s$ is a differential (with respect to the total degree) on the bigraded vector space $\widehat{\Omega}(E)$. To ease the notation, we henceforth denote the associated cohomology by $\widehat{H}^\bullet(E,s)$. By construction, for $p<0$ and $p>\mathrm{rk}(A)$, we have $\widehat{H}^p(E,s)=0$. A spectral sequence argument allows to compare the Atiyah classes of $E$ and $K$:

\begin{theorem}\label{prop:BRST}
Let $A\hookrightarrow L$ be a Lie pair, and $E$ be a non-positively graded regular homotopy $A$-module. Assume that $\mathcal{H}^\bullet(E,\partial)=\mathcal{H}^0(E,\partial)=:K$. Then, for every $p\in\mathbb{Z}$, 
\begin{equation}\label{isomatiyah}
    \widehat{H}^p(E,s)\simeq H^p\big(A,A^\circ\otimes \mathrm{End}(K)\big).
\end{equation}

\noindent In particular, for $p\leq-1$, $\widehat{H}^p(E,s)=0$. Moreover, the Atiyah class $[\alpha_E]\in \widehat{H}^1(E,s)$ of the homotopy $A$-module $E$ is sent by the isomorphism \eqref{isomatiyah} to the Atiyah class $[\mathrm{at}_K]\in H^1\big(A,A^\circ\otimes \mathrm{End}(K)\big)$ of the Lie algebroid representation $K$.
\end{theorem}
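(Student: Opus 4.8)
The plan is to run the spectral sequence of the filtration of $\widehat{\Omega}(E)$ by $A$-arity. Set $F^p\widehat{\Omega}(E)=\bigoplus_{k\geq p}\bigoplus_{j}\Omega^k(A,A^\circ\otimes\mathrm{End}_j(E))$; since each $s^{(k)}$ raises the arity by $k\geq 0$, this is a decreasing, exhaustive and bounded ($0\leq p\leq\mathrm{rk}(A)$) filtration of the complex $(\widehat{\Omega}(E),s)$ of Proposition~\ref{prophomologys}, so the associated spectral sequence converges to $\widehat{H}^{\bullet}(E,s)$ with no convergence subtleties. The page-$0$ differential is the arity-preserving piece $s^{(0)}=[\partial,\cdot\,]$, acting only on the $\mathrm{End}(E)$-slot, so $E_0^{p,q}=\Omega^p(A,A^\circ\otimes\mathrm{End}_q(E))$ with $d_0=s^{(0)}$.

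Next I would compute the $E_1$-page. Since the complex $(\mathrm{End}_\bullet(E),[\partial,\cdot\,])$ is, by Proposition~\ref{prop:quasiso}, a complex of vector bundles deformation-retracting (over $M$) onto $(\mathrm{End}(K),0)$ placed in degree $0$, the functor $\Omega^p(A,A^\circ\otimes-)=\Gamma(\wedge^pA^*\otimes A^\circ\otimes-)$ commutes with its cohomology, whence $E_1^{p,q}=\Omega^p\big(A,A^\circ\otimes\mathcal{H}^q(\mathrm{End}_\bullet(E),[\partial,\cdot\,])\big)$. By Proposition~\ref{prop:quasiso} this vanishes for $q\neq 0$ and equals $\Omega^p(A,A^\circ\otimes\mathrm{End}(K))$ for $q=0$. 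The differential $d_1$ is induced by $s^{(1)}=d^{\widehat{\nabla}}_A$, which descends to $E_1$ because it commutes with $s^{(0)}$ (Equation~\eqref{eq:anticommute}, ultimately $[\nabla_a,\partial]=0$); since the $A$-connection $\nabla$ descends to $\nabla^K$ on $K$ by Equation~\eqref{eqproj} while the Bott part of $\widehat{\nabla}$ is untouched, $d_1$ is exactly the Chevalley--Eilenberg differential $d^{\widehat{\nabla}^K}_A$ of the $A$-module $A^\circ\otimes\mathrm{End}(K)$. As $E_1$, and hence every later page, is concentrated in the single row $q=0$, all higher differentials $d_r\colon E_r^{p,q}\to E_r^{p+r,q-r+1}$ ($r\geq 2$) vanish, so $E_2=E_\infty$, with $E_\infty^{p,0}=H^p(A,A^\circ\otimes\mathrm{End}(K))$ and $E_\infty^{p,q}=0$ for $q\neq 0$. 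Convergence then yields the first claim: the induced filtration on $\widehat{H}^n(E,s)$ has associated graded $\bigoplus_{p+q=n}E_\infty^{p,q}=E_\infty^{n,0}$, concentrated in a single filtration degree, so the filtration has trivial extensions and $\widehat{H}^p(E,s)\simeq H^p(A,A^\circ\otimes\mathrm{End}(K))$ for every $p$; in particular this vanishes for $p<0$ (and for $p>\mathrm{rk}(A)$).

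It remains to track the Atiyah class $[\alpha_E]\in\widehat{H}^1(E,s)=E_\infty^{1,0}$. Because this group sits purely in filtration degree $1$, I would replace the cocycle $\alpha_E$ by a cohomologous one lying in $F^1\widehat{\Omega}(E)$: the arity-$0$ component $\alpha^{(0)}(l)=[\nabla_\ell,\partial]$ from Equation~\eqref{eq:alpha0} is a $[\partial,\cdot\,]$-cocycle in $\mathrm{End}_1(E)$, hence a $[\partial,\cdot\,]$-coboundary $\alpha^{(0)}=s^{(0)}(\psi^{(0)})$ by Proposition~\ref{prop:quasiso}, and $\alpha_E-s(\psi^{(0)})$ has vanishing arity-$0$ part while its arity-$1$ part is $\alpha^{(1)}-s^{(1)}(\psi^{(0)})$. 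Under $E_1^{1,0}=\Omega^1(A,A^\circ\otimes\mathrm{End}(K))$ this represents the edge class in $E_\infty^{1,0}=E_2^{1,0}$, and it differs from the image of $\alpha^{(1)}$ only by the $d^{\widehat{\nabla}^K}_A$-exact term $d^{\widehat{\nabla}^K}_A\overline{\psi^{(0)}}$, hence defines the same class in $H^1(A,A^\circ\otimes\mathrm{End}(K))$. Finally, by Equation~\eqref{eq:alpha4} one has $\alpha^{(1)}(a;l)=R_\nabla(a,\ell)+[\partial,\omega^{(2)}_L](a,\ell)$, whose image in $\mathrm{End}(K)$ is $\overline{R_\nabla}(a,\ell)$ (the term $[\partial,\omega^{(2)}_L]$ is $[\partial,\cdot\,]$-exact, so it dies in $\mathcal{H}^0$); here $\overline{R_\nabla}$ is the curvature of the $L$-connection on $K$ induced by $D_L$ via Equation~\eqref{eqproj2}, which extends $\nabla^K$, so its restriction to $\wedge^1A\otimes L/A$ is precisely the Atiyah cocycle $\mathrm{at}_K$ of the Lie algebroid representation $K$ for the pair $A\hookrightarrow L$ in the sense of \cite{chenAtiyahClassesHomotopy2016}. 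Therefore $[\alpha_E]\mapsto[\mathrm{at}_K]$.

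I expect the main obstacle to be precisely this last bookkeeping: verifying that the coboundary corrections needed to push $\alpha_E$ into $F^1$ alter its arity-$1$ component only by a $d^{\widehat{\nabla}^K}_A$-coboundary (so that the cohomology class in $H^1(A,A^\circ\otimes\mathrm{End}(K))$ is unchanged), and that the descended representative coincides on the nose with the Chen--Stiénon--Xu cocycle, including the compatibility of the chosen $L$-connections extending $\nabla^K$. The spectral-sequence bookkeeping (identifying $d_1$ with $d^{\widehat{\nabla}^K}_A$ via Equations~\eqref{eqproj}--\eqref{eqproj2} and checking that $E_1$ commutes with $\Omega^\bullet(A,A^\circ\otimes-)$) is routine given Propositions~\ref{prophomologys} and~\ref{prop:quasiso}.
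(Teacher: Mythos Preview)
Your proposal is correct and follows essentially the same route as the paper: filter by $A$-arity, identify $E_1$ via Proposition~\ref{prop:quasiso}, observe that the single nonzero row forces degeneration at $E_2$, and then track $[\alpha_E]$ through the edge map to $\overline{\alpha^{(1)}}=\mathrm{at}_K$. The only notable difference is that you explicitly push $\alpha_E$ into $F^1$ before reading off the $E_2^{1,0}$-class, whereas the paper computes $\overline{\alpha}^{(1)}$ directly from $(D_L)^2$ and Equation~\eqref{eqproj2}; your concern about the $\psi^{(0)}$-correction is indeed harmless, since $\psi^{(0)}(l)\big|_{E_0}$ preserves $\mathrm{Im}(\partial)$ (because $[\partial,\psi^{(0)}(l)]=\alpha^{(0)}(l)=[\nabla_\ell,\partial]$ sends $E_{-1}$ into $\mathrm{Im}(\partial)$), so $\overline{\psi^{(0)}}$ is a well-defined element of $\Gamma(A^\circ\otimes\mathrm{End}(K))$ and $\overline{s^{(1)}\psi^{(0)}}=d^{\widehat{\nabla}^K}_A\overline{\psi^{(0)}}$ as you claim.
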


\begin{proof} 
The first page of the spectral sequence associated to the filtered chain complex $(\widehat{\Omega}(E),s)$ is $E^{p,q}_1=\Omega^p\big(A,A^\circ\otimes \mathcal{H}^q(\mathrm{End}_\bullet(E),[\partial,\cdot\,])\big)$,
while the second page is given by $E^{p,q}_2=H^p\big(A,A^\circ\otimes \mathcal{H}^q(\mathrm{End}_\bullet(E),[\partial,\cdot\,])\big)$,  for $0\leq p\leq\mathrm{rk}(A)$. By the set of identities~\eqref{eqlemma2} in Proposition \ref{prop:quasiso}, we have $E^{p,q}_2=0$ for every $q\neq0$,  while by identity \eqref{eqlemma1} we have
\begin{equation*}
     E^{p,0}_2\simeq H^p\big(A,A^\circ\otimes \mathrm{End}(K)\big).
\end{equation*}
 The spectral sequence degenerating -- hence collapsing -- on the second page, 
 the latter is isomorphic to the cohomology of the filtered complex $(\widehat{\Omega}(E),s)$ associated to the given spectral sequence, namely
\begin{equation*}
    \widehat{H}^p(E,s)\simeq H^p\big(A,A^\circ\otimes \mathrm{End}(K)\big).
\end{equation*} 
For all other $p$, i.e. $p<0$ or $p>\mathrm{rk}(A)$, the first page $E^{p,q}_1$ is zero, and so is $\widehat{H}^p(E,s)$.

Regarding the second statement, consider Equation \eqref{isomatiyah} for $p=1$. It establishes that the cohomology class of the Atiyah cocycle $\alpha_E$ associated to the homotopy $A$-module $E$ (element of the left-hand side), is  mapped to a cohomology class taking values in $A^\circ\otimes \mathrm{End}(K)$ (on the right-hand side). Let us show that the latter is the cohomology class of the Atiyah cocycle $\mathrm{at}_K$ associated to the Lie algebroid representation $K$ (for a particular extension to $L$ of the $A$-connection $\nabla^K$).

The Atiyah cocycle $\alpha_E$ decomposes with respect to the form degree as a sum $\alpha_E=\sum_{p=0}^{\mathrm{rk}(A)}\alpha^{(p)}$, where $\alpha^{(p)}\in \Omega^p(A,A^\circ\otimes\mathrm{End}_{1-p}(E))$. The component $\alpha^{(1)}$ takes values in $A^\circ\otimes\mathrm{End}_0(E)$ so, at the level of  $[\partial,\cdot\,]$-cohomology, $\alpha^{(1)}$ induces a $1$-form on $A$ taking values in $A^\circ\otimes\mathrm{End}(K)$, denoted $\overline{\alpha}^{(1)}$ and defined by:
\begin{equation*}
    \overline{\alpha}^{(1)}(a;l)(\overline{e})=\overline{\alpha^{(1)}(a;l)(e)},
\end{equation*}
for any $a\in\Gamma(A)$ and $l\in\Gamma(L/A)$.
The Lie algebroid cohomology class of $\overline{\alpha}^{(1)}$ is the image of the class $[\alpha_E]$ under the isomorphism~\eqref{isomatiyah}.
Then, Equations \eqref{eq:aD} and \eqref{eqproj2} imply that
\begin{equation*}
    \overline{\alpha}^{(1)}(a;l)(\overline{e})
    =\overline{(D_L)^{2}(a,\ell)(e)}
    =(d^\nabla_L)^{2}(a,\ell)(\overline{e})
    =R_\nabla(a;l)(\overline{e}),
\end{equation*}
where $\ell\in\Gamma(L)$ is any preimage of $l$.
The latter term is precisely the Atiyah cocycle associated to the $L$-connection $\nabla$ on $K$, so we have $\overline{\alpha}^{(1)}=\mathrm{at}_K$. From this, we deduce that the Lie algebroid cohomology class of $\overline{\alpha}^{(1)}$ is the Atiyah class $[\mathrm{at}_K]$ of $K$.  The Atiyah class~$[\alpha_E]$ being sent to $[\overline{\alpha}^{(1)}]$ through the isomorphism~\eqref{isomatiyah} proves the second claim. \end{proof} 

\begin{remark}
Equation \eqref{eq:alpha4} and the latter part of the proof of Theorem \ref{prop:BRST} show that the Atiyah cocycle $\alpha_E$ \emph{contains} the Atiyah cocycle $\mathrm{at}_K$ as its first component $\alpha^{(1)}$.  It makes sense graphically (see Figure \ref{figure1}) as this component is the unique component of $\alpha_E$ lying on the column of endomorphism degree zero. This column is the unique column in the bigraded vector space $\widehat{\Omega}(E)$ which does not vanish when passing to the $[\partial,\cdot\,]$-cohomology, then giving  the space of forms $\Omega^{p}(A,A^\circ\otimes\mathrm{End}(K))$, to which belongs $\mathrm{at}_K$.
\end{remark}

Theorem \ref{prop:BRST} also establishes that the other components of $\alpha_E$ do not contain more cohomological information that is not already contained in $\mathrm{at}_K$.

\begin{corollary}\label{cor:equivalence} With the assumptions of Theorem \ref{prop:BRST}, 
\[[\alpha_E]=0\Leftrightarrow [\mathrm{at}_K]=0.\]
\end{corollary}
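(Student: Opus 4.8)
The plan is to read this off directly from Theorem \ref{prop:BRST}. That theorem, specialized to $p=1$, asserts the existence of a linear isomorphism
\[
\widehat{H}^1(E,s)\;\simeq\;H^1\big(A,A^\circ\otimes\mathrm{End}(K)\big)
\]
under which the Atiyah class $[\alpha_E]$ of the homotopy $A$-module $E$ is mapped precisely to the Atiyah class $[\mathrm{at}_K]$ of the Lie algebroid representation $K$. Since a linear isomorphism is in particular injective and sends $0$ to $0$, the element $[\alpha_E]$ vanishes if and only if its image $[\mathrm{at}_K]$ vanishes. That is the entire argument: there is no genuine obstacle here, the real content having already been packaged into Theorem \ref{prop:BRST} — ultimately into the degeneration of the spectral sequence of $(\widehat{\Omega}(E),s)$ on its second page together with the quasi-isomorphism computation of Proposition \ref{prop:quasiso}.

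If one prefers to avoid invoking the full isomorphism and argue by the two implications separately, the same conclusion follows from the spectral-sequence comparison alone: the edge homomorphism $\widehat{H}^1(E,s)\to E_2^{1,0}\simeq H^1(A,A^\circ\otimes\mathrm{End}(K))$ sends $[\alpha_E]$ to $[\overline{\alpha}^{(1)}]=[\mathrm{at}_K]$, as computed in the last part of the proof of Theorem \ref{prop:BRST}, which already gives $[\alpha_E]=0\Rightarrow[\mathrm{at}_K]=0$; and the collapse of the spectral sequence on page two forces this edge map to be an isomorphism (in particular injective), which yields the reverse implication $[\mathrm{at}_K]=0\Rightarrow[\alpha_E]=0$. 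Either way the statement is immediate and requires no additional computation.
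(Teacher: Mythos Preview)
Your proposal is correct and matches the paper's approach exactly: the paper states the corollary with no separate proof, treating it as an immediate consequence of the isomorphism in Theorem~\ref{prop:BRST} sending $[\alpha_E]$ to $[\mathrm{at}_K]$. Your optional second paragraph merely unpacks the spectral-sequence argument already contained in the proof of that theorem, so nothing is missing or different.
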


\section{Examples}\label{sectionExamples}

\subsection{Two-term example I: The double of a vector bundle} Let $A\hookrightarrow L$ be a Lie pair and  $K$ a vector bundle over $M$. Set $E=K[1]\oplus K$, namely: $E_{-1}=K[1],\;E_0=K$; it is a split graded vector bundle of amplitude 2. Take the vector bundle morphism $\partial:K[1]\to K$ connecting the two vector bundles $K[1]$ and $K$ to be the \emph{suspension}\footnote{The \emph{suspension operator} $sp$ on a graded vector space $V=\bigoplus_i V_i$ has the property of shifting the degree of homogeneous elements by $+1$. Namely, $(spV)_i=V_{i-1}$, so that if $x$ has degree $i-1$, then $sp(x)$ has degree~$i$. Obviously, this operator has an inverse, called the \emph{desuspension operator}, which shifts the degree of homogeneous elements by $-1$. The latter is also denoted $[1]$ in the literature.}  of the identity morphism; $\partial=\mathrm{id}_K\circ sp$. Let $\nabla$ be a Lie algebroid $A$-connection on $K$. It then automatically induces an $A$-connection on $K[1]$, also denoted $\nabla$,  and commutes with the operator~$\partial$.  Let $\omega^{(2)}$ be the 2-form on $A$ defined as the \emph{desuspension} of the curvature tensor of $\nabla$; for every two elements $a,b\in\Gamma(A)$, we have $\omega^{(2)}(a,b)=sp^{-1}R_\nabla(a,b):K\to K[1]$.  
The structure of representation up to homotopy on $E$ induced by this connection is then given by the total degree $+1$ operator
\begin{equation*}D_A=\partial+d_A^\nabla+\omega^{(2)}.\end{equation*}

We extend $\nabla$ 
to a Lie algebroid $L$-connection on $E$, also denoted $\nabla$. 
Since $\partial$ is the (suspension of) the identity morphism on $K$, the extended connection -- also denoted $\nabla$ -- commutes with $\partial$. Furthermore, we extend $\omega^{(2)}$ to a 2-form  on $L$ -- denoted $\omega^{(2)}_L$ -- as the desuspension of the curvature tensor of the $L$-connection $\nabla$, so that the restriction of $\omega^{(2)}_L$ to $\wedge^2A$ coincides with $\omega^{(2)}$.
Then, the Atiyah cocycle associated to these choices reads 
\begin{align}
\alpha^{(0)}(l)&=[\nabla_\ell,\partial]=0,\\
\alpha^{(1)}(a;l)&=R_\nabla(a,\ell)+[\partial,\omega^{(2)}_L](a,\ell)=0,\label{eq:session}\\
\alpha^{(2)}(a,b;l)&=d_L^\nabla(\omega^{(2)}_L)(a,b,\ell)=0.
\end{align}
The first and second equations are trivial, the third equation is the Bianchi identity.
Thus, no matter how we extend $\nabla$ to a Lie algebroid $L$-connection on~$K$, we can define 
a homotopy A-compatible superconnection $D_L$ on $E=K[1]\oplus K$. Be aware that this \emph{does not mean} that the Lie algebroid $L$-connection $\nabla$ on $K$ is $A$-compatible in the classical sense \cite{chenAtiyahClassesHomotopy2016}. Indeed, for this to happen, we would have $R_\nabla(a,\ell)=0$ for some choice of extension of $\nabla$, which is a particular case of Equation \eqref{eq:session}. Rather, and in full generality, one observes that the homotopy $A$-module $E$ is homotopy equivalent to the rank zero vector bundle over $M$, which always admits $A$-compatible $L$-connections. Then, using the results of Section \ref{subway}, this explains the triviality of the Atiyah class of the double vector bundle $E=K[1]\oplus K$.

\subsection{Two-term example II: The normal complex \texorpdfstring{$A\to L$}{A -> L}}\label{subnormal}

Let $A\hookrightarrow L$ be a Lie pair over a smooth manifold $M$. This example establishes the relationship between the \emph{normal complex $A\hookrightarrow L$} and the Bott representation $L/A$.
The Bott connection $\nabla^{Bott}$ -- defined in Equation \eqref{equationBott} -- can be extended to an $L$-connection on ${L/A}$ -- also denoted $\nabla^{Bott}$ --  which is not necessarily a Lie algebroid representation.
The Atiyah cocycle associated to this situation is a 1-form on $A$ taking values in $A^\circ\otimes \mathrm{End}(L/A)$ denoted $\mathrm{at}^{Bott}$. 

Let us now fix the notations for the normal complex of $A$. This is a two-term homotopy  $A$-module $(E,\partial)$, 
 with $E_{-1}=A[1], E_0=L$ and where $\partial=\iota\circ sp:A[1]\to L$ is the suspension of the inclusion map $\iota:A\hookrightarrow L$. For clarity, we denote this representation up to homotopy simply by $A\to L$  and we will use $\iota$ instead of $\partial=\iota\circ sp$ whenever it is convenient, keeping in mind that $A$ sits in degree $-1$ when understood from the graded perspective. Under these conventions, it is easy to see that $\mathrm{End}_{-1}(A\to L)\simeq\mathrm{Hom}(L,A)$, that $\mathrm{End}_{0}(A\to L)\simeq\mathrm{End}(A)\oplus\mathrm{End}(L)$, and that $\mathrm{End}_1(A\to L)\simeq\mathrm{Hom}(A,L)$.  Finally, observe that the chain complex $A\to L$ is a resolution of the  normal bundle $L/A$, so that we are meeting the assumptions established in Section \ref{subway}, in the sense that the diagram containing the representation up to homotopy and its cohomology is now
  \begin{center}
\begin{tikzcd}[column sep=2cm,row sep=2cm]
  A\ar[d,"p" left] \ar[r, "\iota"]& L \ar[d, "p" right] \\
 0\ar[r,dashed, "0" ]   & {L/A}
\end{tikzcd} 
\end{center}

Let $\nabla$ be any $L$-connection on the vector bundle $A$. Such a connection always exists since $A$ admits a $TM$-connection $\nabla^{TM}$, and then via the anchor map  $\rho:L\to TM$ it induces a $L$-connection $\nabla$ with e.g. $\nabla_\ell(a)=\nabla^{TM}_{\rho(\ell)}(a)$.  
Inspired by the constructions in \cite{crainicSecondaryCharacteristicClasses2005, abadRepresentationsHomotopyLie2011}, (see also \cite{blaomGeometricStructuresDeformed2006} for the interpretation in terms of \emph{Cartan connections}),
we define the following notions: 

\begin{definition}\label{basconnnormal}
\begin{enumerate}
\item The \emph{basic connection} of $A$ on $L$ is defined by
\begin{equation}\label{eq:basic1}
    \nabla^{bas}_a(\ell)=\nabla_\ell(a)+[a,\ell],
\end{equation}
for every $a\in\Gamma(A)$ and $\ell\in\Gamma(L)$.
\item The \emph{basic curvature} $ R^{bas}\in\Omega^2(A,\mathrm{Hom}(L,A))$  is
\begin{equation*}
    R^{bas}(a,b)(\ell)=\nabla_\ell([a,b])
    -[\nabla_\ell(a),b]-[a,\nabla_\ell(b)]+\nabla_{\nabla^{bas}_{a}(\ell)}(b)-\nabla_{\nabla^{bas}_{b}(\ell)}(a),
\end{equation*}
for every $a,b\in\Gamma(A)$ and $\ell\in\Gamma(L)$.
\end{enumerate}
\end{definition}
In particular, when $\ell$ takes values in $A$, the right-hand side of Equation \eqref{eq:basic1} takes values in $A$ by involutivity. Furthermore, since $\iota$ is the inclusion map, the basic curvature is related to the curvature of the basic connection $R_{\nabla^{bas}}$ by the equation
\begin{equation}\label{eq:bascurv1}
    R_{\nabla^{bas}}+[\partial, R^{bas}]=0.
\end{equation}
We used $\partial=\iota\circ sp$ in order to emphasize the degree $(+1)$ of this operator, which then has consequence on the sign in the graded bracket, namely $[\partial, R^{bas}]=\partial\circ R^{bas}+R^{bas}\circ\partial$.
From Equation \eqref{eq:bascurv1}, one can see that the basic curvature plays the role of the connection 2-form $\omega^{(2)}$. Because of the relationship \eqref{eq:bascurv1} between the two differential forms, the basic curvature satisfies the Bianchi-like identity
\begin{equation*}
    d^{\nabla^{bas}}_AR^{bas}=0,
\end{equation*}
and the degree $+1$ operator $D_A:\Omega(A,A\to L)_{\blacktriangle}\to \Omega(A,A\to L)_{\blacktriangle+1}$ corresponding to this representation up to homotopy reads
\begin{equation*}
    D_A=\partial+d^{\nabla^{bas}}_A+ R^{bas}\wedge\cdot .
\end{equation*}

Recall that we have used an $L$-connection $\nabla$ on $A$ in order to define the basic connection. Then the quotient map $p:L\to{L/A}$  satisfies $p(\nabla_\ell(a))=0$, so that we have, using Equation \eqref{equationBott},
\[p(\nabla^{bas}_a(\ell))=p(\nabla_\ell(a))+p([a,\ell])=\nabla_a^{Bott}(p(\ell)),\]
i.e. the basic and Bott connections are related by 
\begin{equation}\label{commutconnections}
p\circ\nabla^{bas}_a=\nabla^{Bott}_a\circ p.
\end{equation}
From this identity, we infer that the relationship between the curvatures~$R_{\nabla^{bas}}$ 
and~$R_{\nabla^{Bott}}$ associated to the basic and the Bott connections, respectively, is  
\begin{equation}\label{eq:commute}
    p\circ R_{\nabla^{bas}}(a,b)=R_{\nabla^{Bott}}(a,b)\circ p=0,
\end{equation}
as the Bott connection is flat. Then, by definition of the basic curvature, we have:
\begin{equation}\label{zerocurv}
p\circ R^{bas}(a,b)=0.
\end{equation}

From Equations \eqref{commutconnections} and \eqref{zerocurv}, we deduce that the quotient map $p$ induces a degree preserving morphism of representations up to homotopy, also denoted  $p$, making the following diagram commutative:
    \begin{center}
\begin{tikzcd}[column sep=2cm,row sep=2cm]
  \Omega(A,A\to L)_\blacktriangle\ar[d,"p" left] \ar[r, "D_A"]& \Omega(A,A\to L)_{\blacktriangle+1} \ar[d, "p", right] \\
   \Omega^{\bullet}(A,{L/A}) \ar[r, "d^{\nabla^{Bott}}"]& \Omega^{\bullet+1}(A,{L/A}) 
\end{tikzcd} 
\end{center}
The degree symbol in the upper row refers to the total degree of the bigraded complex, while the degree symbol in the lower row refers to the form degree, matching the total degree.

\begin{remark}\label{extend}
It is always possible to extend the basic connection to an $L$-connection~$\nabla^L$ on $L$ which stabilizes $A$,  
and to extend the basic curvature $R^{bas}$ to a 2-form $\omega^{(2)}_L\in\Omega^{2}(L,\mathrm{Hom}(L,A))$. 
This corresponds to extending the $A$-superconnection on $A\to L$ to an $L$-superconnection $D_L$. Furthermore, let  $\nabla^{Bott}$ denote the extension of the Bott connection to an $L$-connection on $L/A$. The  commutative diagram above then stays commutative whenever we extend the differential forms on $A$ to forms on $L$, and change $D_A$ by $D_L$. 
\end{remark}

Let us now compute the Atiyah cocycle in this concrete example.
For $k=0$, Equation~\eqref{eq:alpha0} gives
\begin{equation}\label{eq:alpha0tri}
\alpha^{(0)}(l)=[\nabla^L_\ell,\partial]=0,
\end{equation}
where recall that $\partial=\iota\circ sp:A[1]\to L$ and that the bracket is the graded commutator on $\mathrm{End}(E)$ as defined in Remark \ref{remark1}. 
The second identity of \eqref{eq:alpha0tri} holds as $\iota$ being the inclusion, we always have $\nabla^{L}\circ \iota=\iota\circ\nabla^{L}$. 
The two other components of the Atiyah cocycle are $\alpha^{(1)}\in\Omega^1(A,A^\circ\otimes (\mathrm{End}(A)\oplus\mathrm{End}( L)))$ and $\alpha^{(2)}\in\Omega^2(A,A^\circ\otimes \mathrm{Hom}(L,A))$, which are obtained from Equation \eqref{eq:alpha4} for $k=1$ and $k=2$:  
\begin{align*}
\alpha^{(1)}(a;l)&=R_{\nabla^L}(a,\ell)
+[\partial, \omega^{(2)}_L](a,\ell),\\
\alpha^{(2)}(a,b;l)&=d_L^{\nabla^L}\omega^{(2)}_L(a,b,\ell).
\end{align*}

\begin{remark}
One can show by direct computation that $\alpha=\alpha^{(0)}+\alpha^{(1)}+\alpha^{(2)}$ is $s$-closed.
\end{remark}

The cohomology class of $\alpha$ stays the same if one substracts from $\alpha$ a coboundary $s(\phi)$, for some $\phi=\phi^{(0)}+\phi^{(1)}$ with $\phi^{(0)}\in \Gamma(A^\circ\otimes  (\mathrm{End}(A)\oplus\mathrm{End}( L)))$ and $\phi^{(1)}\in\Omega^1(A,A^\circ\otimes \mathrm{Hom}(L,A))$ (see Figure \ref{figure2}). Then, the following Proposition establishes that the Atiyah class $[\alpha]$ of the normal complex $A\to L$ does not contain more nor less information than the Atiyah class $[\mathrm{at}^{Bott}]$ of the Bott representation $L/A$.

\begin{figure}[ht]
  \centering
  \begin{tikzpicture}[scale=1]
    \coordinate (Origin)   at (0,0);
    \coordinate (XAxisMin) at (1,0);
    \coordinate (XAxisMax) at (11,0);
    \coordinate (YAxisMin) at (6,0);
    \coordinate (YAxisMax) at (6,8);
    \draw [ultra thick, black,-latex] (XAxisMin) -- (XAxisMax) node [right] {\begin{tabular}[c]{@{}c@{}}endomorphism\\  degree\end{tabular}};
    \draw [ultra thick, black,-latex] (YAxisMin) -- (YAxisMax) node [above] {form degree};

    \clip (-1,-1) rectangle (12cm,7cm); 
    \coordinate (Bone) at (5,5);
    \coordinate (Btwo) at (1,7);
    \coordinate (B2) at (2,1);
    \coordinate (B3) at (2,3);
    \coordinate (B4) at (4,1);
    \coordinate (B5) at (0,5);
    \draw[style=help lines,dashed] (4,1) grid[step=2cm] (5,8);
    \draw[style=help lines,dashed] (7,1) grid[step=2cm] (8,8);
    \draw[style=help lines,dashed] (4,1) grid[step=2cm] (4,0);
    \draw[style=help lines,dashed] (8,1) grid[step=2cm] (8,0);
    \draw[style=help lines,dashed] (5,2) grid[step=2cm] (7,2);
    \draw[style=help lines,dashed] (5,4) grid[step=2cm] (7,4);
    \draw[style=help lines,dashed] (5,6) grid[step=2cm] (7,6);
    \foreach \x in {2,...,4}{
      \foreach \y in {1,2,...,5}{
        \node[draw,circle,inner sep=1pt,fill] at (2*\x,2*\y) {};
         \node[draw,circle,inner sep=1pt,fill,blue] at (4,4){};
          \node[draw,circle,inner sep=1pt,fill,blue] at (6,2){};
        \node[draw,circle,inner sep=1pt,fill,red] at (4,2){};
          \node[draw,circle,inner sep=1pt,fill,red] at (6,0){};
       \node[draw,circle,inner sep=1pt,fill,DarkGreen] at (B2){};
      }
    }
    \node [below] at (6,0)  {$0$};
    \node [below] at (4,0)  {$-1$};
    \node [below] at (8,0)  {$1$};
    \node [below left] at (6,2)  {$1$};
    \node [below left] at (6,4)  {$2$};
    \node [below left] at (6,6)  {$3$};

    \draw [ultra thick,DarkGreen, -latex] (B2) -- (B4);
    \draw [ultra thick,DarkGreen, -latex] (B2) -- (B3);
    \draw [ultra thick,DarkGreen, -latex] (B2) -- (B5);
    \node [below,DarkGreen] at (3,1) {\large $[\iota,.\,]$};
    \node [right,DarkGreen] at (2,2) {\large $d^{\widehat{\nabla}}_A$};
    \node [below left,DarkGreen] at (2,5) {\large $[ R^{bas},.\,]$};
    \draw [ultra thick,blue] (4,4) -- (6,2);
    \draw [ultra thick,red] (4,2) -- (6,0);
    \node [above left,blue] at (4,4) {\large $\alpha^{(2)}$};
    \node [above right,blue] at (6,2) {\large $\alpha^{(1)}$};
      \node [above left,red] at (4,2) {\large $\phi^{(1)}$};
    \node [above right,red] at (6,0) {\large $\phi^{(0)}$};
  \end{tikzpicture}
\caption{Representation of the bigraded space $\Omega^{\bullet}(A,A^\circ\otimes \mathrm{End}_\bullet(A\to L))$ and the Atiyah cocycle $\alpha$. Since the representation up to homotopy $A\to L$ is concentrated in degrees $-1,0$, the width of the diagram is 3, while the height corresponds to the rank of $A$.}   \label{figure2}
\end{figure}
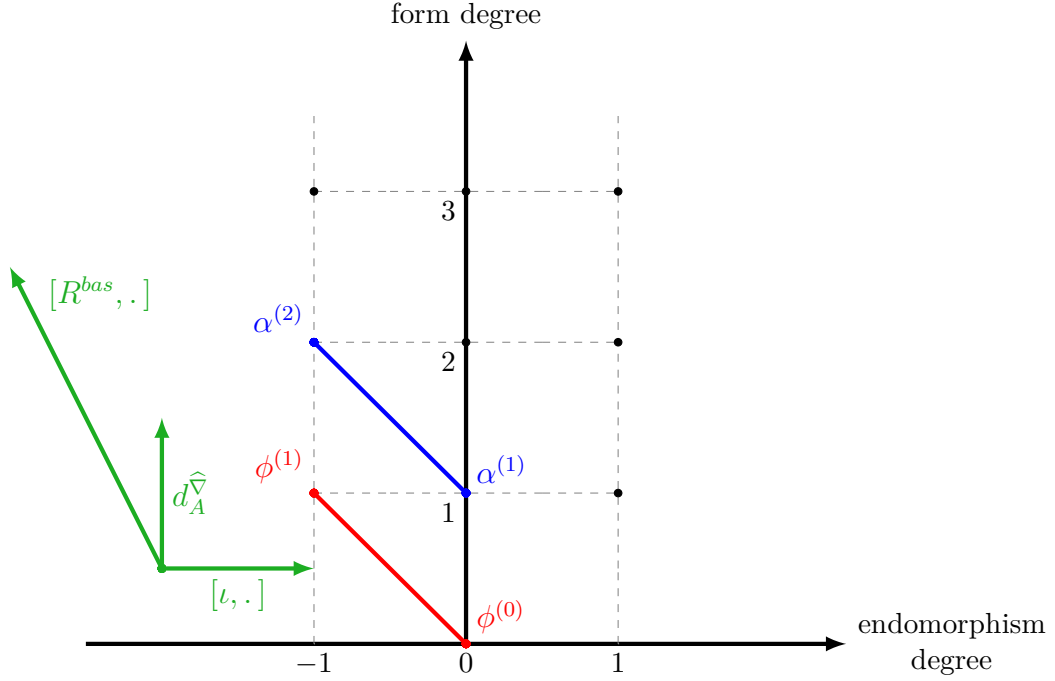

\begin{proposition}\label{propo}
$[\alpha]=0$ if and only if $[\mathrm{at}^{Bott}]=0$.
\end{proposition}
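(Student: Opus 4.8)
The plan is to recognize the normal complex $A\to L$ as a regular resolution of the Bott representation, so that Corollary \ref{cor:equivalence} applies. First I would check the hypotheses of Theorem \ref{prop:BRST}: the homotopy $A$-module $(E,\partial)$ with $E_{-1}=A[1]$, $E_0=L$ and $\partial=\iota\circ sp$ is non-positively graded (concentrated in degrees $-1$ and $0$), and it is regular because $\iota\colon A\hookrightarrow L$ is fiberwise injective, hence of constant rank. Therefore $\mathcal{H}^{-1}(E,\partial)=\ker\iota=0$ and $\mathcal{H}^{0}(E,\partial)=L/\iota(A)=L/A$, so $E$ is a resolution of $K:=L/A$, exactly as recorded in the commutative diagram above.

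Next I would identify the $A$-connection $\nabla^K$ on $K=L/A$ induced by $D_A$ through Equation \eqref{eqproj} with the Bott connection. For $e\in\Gamma(L)=\Gamma(E_0)$ and $a\in\Gamma(A)$, using $D_A=\partial+d^{\nabla^{bas}}_A+R^{bas}\wedge\cdot$, the form-degree-$1$ component of $D_A(e)$ evaluated on $a$ is $\nabla^{bas}_a(e)$, since $\partial$ vanishes on $E_0$ and $R^{bas}$ contributes only in form degree $2$. Applying the quotient map $p\colon L\to L/A$ and invoking Equation \eqref{commutconnections}, namely $p\circ\nabla^{bas}_a=\nabla^{Bott}_a\circ p$, I get $\nabla^K_a(\bar e)=\overline{\nabla^{bas}_a(e)}=\nabla^{Bott}_a(\bar e)$, i.e. $\nabla^K=\nabla^{Bott}$. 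Consequently the Atiyah cocycle $\mathrm{at}_K$ of the Lie algebroid representation $K$, built from an extension of $\nabla^K$ to an $L$-connection, coincides with the Atiyah cocycle $\mathrm{at}^{Bott}$ of the Bott representation obtained from the corresponding extension.

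With these identifications, Corollary \ref{cor:equivalence} — which follows from the collapse of the spectral sequence in Theorem \ref{prop:BRST} — applies verbatim to $E=(A\to L)$ and $K=L/A$, giving $[\alpha]=[\alpha_E]=0$ if and only if $[\mathrm{at}_K]=[\mathrm{at}^{Bott}]=0$. Alternatively, one could argue hands-on from the explicit cocycle $\alpha=\alpha^{(0)}+\alpha^{(1)}+\alpha^{(2)}$ computed above and Figure \ref{figure2}: the endomorphism-degree-$0$ column is the only one surviving $[\iota,\cdot\,]$-cohomology, where it becomes $\Omega^{\bullet}(A,A^\circ\otimes\mathrm{End}(L/A))$, $\alpha^{(1)}$ maps to the curvature of the Bott connection under $p$, and the spectral sequence shows $\alpha^{(0)}$ and $\alpha^{(2)}$ carry no cohomology beyond $\alpha^{(1)}$. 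The only genuine subtlety — the ``hard part'' — is the bookkeeping in the connection identification and the check that $\mathrm{at}_K$ is literally $\mathrm{at}^{Bott}$ and not merely cohomologous to it; once Equation \eqref{commutconnections} is in hand this is routine, so there is no substantial obstacle.
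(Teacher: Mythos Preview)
Your proof is correct and takes a genuinely different route from the paper's own proof, so a brief comparison is warranted.

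You argue abstractly: verify that the normal complex $E=A[1]\oplus L$ with $\partial=\iota\circ sp$ meets the hypotheses of Theorem~\ref{prop:BRST} (non-positively graded, regular since $\iota$ is injective of constant rank, cohomology concentrated in degree $0$ equal to $L/A$), identify the induced $A$-connection $\nabla^K$ on $K=L/A$ with the Bott connection via Equation~\eqref{commutconnections}, and then invoke Corollary~\ref{cor:equivalence}. This is efficient and entirely legitimate; indeed the paper's introduction advertises Proposition~\ref{propo} as an ``application of the isomorphism of Theorem~\ref{prop:BRST}''.

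The paper, however, proves Proposition~\ref{propo} by an explicit, self-contained construction that does \emph{not} invoke Theorem~\ref{prop:BRST} or Corollary~\ref{cor:equivalence}. For the direction $[\alpha]=0\Rightarrow[\mathrm{at}^{Bott}]=0$, it unpacks $\alpha=s(\phi)$ into the three equations \eqref{eq:beta4}--\eqref{eq:beta6} and shows that the modified $L$-connection $\nabla'^L=\nabla^L-\varphi^{(1)}$ descends to an extension of the Bott connection whose Atiyah cocycle vanishes. For the converse, it chooses a splitting $L=A\oplus B$, writes everything in block-matrix form (Equations~\eqref{eqmatrix11}--\eqref{eqmatrix2}), and explicitly builds $\varphi^{(1)},\varphi^{(2)}$ so that the resulting $L$-superconnection is homotopy $A$-compatible. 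The paper then argues that Equation~\eqref{eq:beta6} is forced by the Bianchi identity once \eqref{eq:beta5} holds.

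What each approach buys: your argument is short and conceptual, and it makes transparent that this proposition is a specialisation of the general theory. The paper's argument is longer but constructive: it exhibits the primitive $\phi$ explicitly in terms of a splitting and the curvature data, which is the kind of concrete formula one would need in applications (and which serves as a warm-up for the more involved three-term example in Section~\ref{start}). Both are valid; yours is the natural ``top-down'' proof once Section~\ref{subway} is in place.
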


\begin{proof}
$\Longrightarrow$
The Atiyah cocycle $\alpha$ being $s$-exact means that there exists $\phi=\phi^{(0)}+\phi^{(1)}$ of total degree 0 such that $\alpha=s(\phi)$. The element  $\phi^{(0)}\in \Gamma(A^\circ\otimes  (\mathrm{End}(A)\oplus\mathrm{End}( L)))$ can be understood as a 1-form on $L$ taking values in $\mathrm{End}(A)\oplus\mathrm{End}( L)$ and denoted $\varphi^{(1)}$, 
while the element  $\phi^{(1)}\in\Omega^1(A,A^\circ\otimes \mathrm{Hom}(L,A))$ can be extended to a 2-form on $L$ taking values in $\mathrm{Hom}(L,A)$ and denoted $\varphi^{(2)}$. 
The elements $\varphi^{(1)}$ and $\varphi^{(2)}$ satisfy, for every $a\in\Gamma(A)$ and $\ell\in\Gamma(L)$ projecting down to $l\in\Gamma(L/A)$:
\begin{equation*}
\varphi^{(1)}(\ell)=\phi^{(0)}(l)\qquad\text{and}\qquad\varphi^{(2)}(a,\ell)=\phi^{(1)}(a;l).
\end{equation*}

The newly defined connection $\nabla'^L=\nabla^L-\varphi^{(1)}$  shifted by the connection form $\varphi^{(1)}$, and the 2-form $\omega'^{(2)}_L=\omega^{(2)}_L-\varphi^{(2)}$, coincide with the basic connection and basic curvature, when restricted  to $A$. 
Thus, the identity $\alpha=s(\phi)$ is equivalent to the following three identities:
\begin{align}
[\iota,\varphi^{(1)}(\ell)]&=0,\label{eq:beta4}\\
R_{\nabla'^L}(a,\ell)+[\partial,\omega'^{(2)}_L](a,\ell)&=0,\label{eq:beta5}\\
d_L^{\nabla'^L}\omega'^{(2)}_L(a,b,\ell)&=0,\label{eq:beta6}
\end{align}
for any $a,b\in\Gamma(A)$ and $\ell\in\Gamma(L)$.


Note that Equation \eqref{eq:beta4} is not void. As an element of $\Gamma(\mathrm{End}_0(A\to L))$, $\varphi^{(1)}(\ell)$ is actually the sum of two vector bundle endomorphisms,  $\varphi^{(1),A}(\ell):A\to A$ and $\varphi^{(1),L}(\ell):L\to L$.  
 Then, Equation~\eqref{eq:beta4} establishes that the former is the restriction to $A$ of the latter,  i.e. that $\varphi^{(1),L}(\ell)\big|_A=\varphi^{(1),A}(\ell)$.
Thus, substracted to the connection $\nabla^L$, 
the connection form $\varphi^{(1)}$ gives rises to an $L$-connection $\nabla'^L$ which, by Equation~\eqref{eq:beta4}, commutes with $\iota$:
\begin{equation*}
    [\nabla'^L_\ell,\iota]=0.
\end{equation*}
This means that this newly defined $L$-connection passes to the quotient, and then induces an alternative well-defined extension $\nabla'^{Bott}$ of the Bott connection on the normal bundle:
\begin{equation*}
    p\circ\nabla'^L=\nabla'^{Bott}\circ p.
\end{equation*}

From Equation \eqref{eq:commute} we deduce that the induced curvature $R_{\nabla'^L}$ is sent to the curvature of the newly defined Bott connection:
\begin{equation}\label{eq:commute2}
    p\circ R_{\nabla'^L}(\ell_1,\ell_2)=R_{\nabla'^{Bott}}(\ell_1,\ell_2)\circ p.
\end{equation}
The right-hand side is not necessarily zero because here we have the version of the Bott connection extended to $L$, which is not necessarily flat outside $A$.
However,  whenever Equation \eqref{eq:commute2} is restricted to $A\otimes L$, its left-hand side is the projection through $p$ of the left-hand side of Equation \eqref{eq:beta5}. Indeed, since the term $[\iota,\omega'^{(2)}_L(a,\ell)]$ in Equation  \eqref{eq:beta5} always lands in $A$, its projection under $p$ vanishes, and we are left with the term $p\circ R_{\nabla'^L}(a,\ell)$. The right-hand side of Equation \eqref{eq:beta5} being zero, we have that $p\circ R_{\nabla'^L}(a,\ell)=0$, implying in turn that $R_{\nabla'^L}(a,\ell)=0$.

Thus, conditions \eqref{eq:beta4} and \eqref{eq:beta5} say that the Atiyah cocycle associated to the
 newly defined extension of the Bott connection $\nabla'^{Bott}$ is zero.
Thus Equation \eqref{eq:beta5} does not contain more information than the classical Atiyah class; it just brings consistence with the representation up to homotopy $A\to L$ and is necessary to induce a modified Bott connection that is projectable.

$\Longleftarrow$ Conversely, 
first choose a splitting $\sigma:L/A\to L$ of the exact sequence
\begin{center}
\begin{tikzcd}[column sep=1.5cm,row sep=1.5cm]
  0\ar[r]&A \ar[r,  "\iota"]& L \ar[r,  "p"]& \ar[l,  "\sigma", bend left]L/A\ar[r]&0.
\end{tikzcd} 
\end{center}
Set $B$ to be the image of this splitting, so that, as vector bundle, $L$ is a direct sum $L=A\oplus B$. Under this splitting, the morphism $\partial=\iota\colon A\to L$ (we omit to write the suspension) reads, in a block matrix form,
\begin{equation*}
    \partial=\begin{pmatrix}
        \iota\\ 0
    \end{pmatrix}.
\end{equation*}

 The fact that the Atiyah cocycle $\mathrm{at}^{Bott}$ is exact, means that there exists an element $\eta\in\Omega^1(L,\mathrm{End}(L/A))$ vanishing on $A$ such that the modified extension of the Bott connection $\nabla'^{Bott}=\nabla^{Bott}-\eta$ satisfies
 \begin{equation}\label{eq:cara}
 R_{\nabla'^{Bott}}(a,\ell)=0.
 \end{equation}
By the splitting, the element $\eta$ induces an endomorphism of the vector bundle $B$. Using block matrix notation with respect to the decomposition $L=A\oplus B$, one can set $\varphi^{(1)}\in\Omega^{1}(L,\mathrm{End}(A)\oplus\mathrm{End}(L))$ to be of the following block matrix form:
\begin{equation}\label{eqmatrix11}
    \varphi^{(1),L}=\begin{pmatrix}
\chi& 0\\ 
0 &  \eta
\end{pmatrix} \qquad \text{and}\qquad  \varphi^{(1),A}=\chi,
\end{equation}
where $\chi$ 
 is any section of $A^\circ\subset L^*$ taking values in $\mathrm{End}(A)$. This satisfies  Equation \eqref{eq:beta4}, and
 in particular one can set $\chi=0$.  

Modify the connection $\nabla^L$ accordingly, by substracting the connection form~$\varphi^{(1)}$ from~$\nabla^L$; this gives $\nabla'^L$. By construction of $\varphi^{(1)}$, its curvature 
  is of block matrix form:
\begin{equation}\label{eqmatrix1}
    R_{\nabla'^L}=\begin{pmatrix}
R_{\nabla'^L}^{A\to A}& R_{\nabla'^L}^{B\to A}\\
0 &  R_{\nabla'^L}^{B\to B}\
\end{pmatrix}.
\end{equation}
The lower left block corresponds to $R_{\nabla'^L}^{A\to B}$ and is zero because $A$ is stable by $\nabla^L$, and hence by $\nabla'^L$.
The lower right block, when evaluated on two elements $a\in\Gamma(A)$ and $\ell\in\Gamma(L)$, is zero precisely because because of Equation~\eqref{eq:cara} and of our choice of $\varphi^{(1)}$ which involves $\eta$ in the lower-right corner of the matrix of  Equation \eqref{eqmatrix11}. 

On the other hand, the block matrix form of $\omega^{(2)}_L$ is made of only the upper line because it takes an element in $L=A\oplus B$ and sends it in $A$. Then  define $\varphi^{(2)}$ as the 2-block line matrix:
\begin{equation}\label{eqmatrix2}
    \varphi^{(2)}=\begin{pmatrix}
R_{\nabla'^L}^{A\to A}+\omega^{(2),A\to A}_L& R_{\nabla'^L}^{B\to A}+\omega^{(2),B\to A}_L\
\end{pmatrix}.
\end{equation}
This is a two form on $L$ taking values in $\mathrm{Hom}(L,A)$ and vanishing whenever restricted to~$A$.
By construction, the above data satisfy the following equation:
\begin{equation*}
    R_{\nabla'^L}(a,\ell)+[\partial,\omega^{(2)}_L](a,\ell)=[\partial,\varphi^{(2)}](a,\ell).
\end{equation*}
Setting $\omega'^{(2)}_L=\omega^{(2)}_L-\varphi^{(2)}$, we obtain Equation \eqref{eq:beta5}.

Now we have to show that  Equation \eqref{eq:beta6} is automatically satisfied.
Since $R_{\nabla'^L}$ satisfies the Bianchi identity $d_L^{\nabla'^L}R_{\nabla'^L}=0$,  Equation \eqref{eq:beta5} -- whose validity we have already shown -- implies that the action of the covariant derivative $d_L^{\nabla'^L}$ on the bracket $[\iota, \omega'^{(2)}_L]$ is trivial, whenever evaluated on a triple $(a,b,\ell)$, for $a,b\in\Gamma(A)$ and $\ell\in\Gamma(L)$. Because $\iota$ is the inclusion map, this necessary condition is precisely the content of Equation \eqref{eq:beta6}. Hence this identity is automatically satisfied and no extra information is contained there.

Since $\varphi^{(1)}$ and $\varphi^{(2)}$ identically vanish when restricted to $A$, they canonically induce elements  $\phi^{(0)}\in \Gamma(A^\circ\otimes  (\mathrm{End}(A)\oplus\mathrm{End}( L)))$ and $\phi^{(1)}\in\Omega^1(A,A^\circ\otimes \mathrm{Hom}(L,A))$, respectively. The construction has been made so that their sum  $\phi=\phi^{(0)}+\phi^{(1)}$ satisfies Equations \eqref{eq:beta4}-\eqref{eq:beta6} or, equivalently,  $\alpha=s(\phi)$. Notice that any other choice of   splitting $\sigma:L/A\to L$ does not change this result. \end{proof}

\begin{remark}
When $A=F, L=TM$ is the Lie pair associated to an involutive subbundle corresponding to a regular foliation by Frobenius' Theorem, the Atiyah class $\mathrm{at}^{Bott}$ recovers the \emph{Molino class} \cite{chenAtiyahClassesHomotopy2016}. Denote $\mathcal{L}$ to be the leaf space of this foliation. This topological space is not necessarily a smooth manifold. The Bott connection is then a prescription of how to identify fibers of the normal bundle $\nu(F)=TM/F$ over each leaf. The topological space of equivalence classes of elements of $\nu(F)$ with respect to the Bott connection is a fiber bundle over the leaf space -- denoted $\nu_{\mathcal{L}}$, which can be understood as the `tangent bundle' of $\mathcal{L}$ since, whenever $\mathcal{L}$ is a smooth manifold, the former is  canonically isomorphic to the latter.

Now, assume that the leaf space is a smooth manifold, and thus admits a tangent bundle.
The Molino class of the Bott connection being zero then means that the (extended) Bott $TM$-connection $\nabla$ on the normal bundle $\nu(F)$ is `projectable' to a $T\mathcal{L}$-connection on $\nu_{\mathcal{L}} \simeq T\mathcal{L}$ (not necessarily a representation).
If  the regular foliation is \emph{stricly simple} -- i.e. if the leaf space $\mathcal{L}$ is smooth and comes from a submersion $M\to \mathcal{L}$ with connected fibers -- then every $T\mathcal{L}$-connection on $T\mathcal{L}\simeq\nu_{\mathcal{L}}$ can be lifted to a $TM$-connection on $\nu(F)$ extending the Bott connection. In other words, it is always possible to find a projectable connection so in that case the Molino class is always zero. 
\end{remark}

\subsection{Three-term example: The adjoint complex of a regular Lie algebroid}\label{start}

Let $A\hookrightarrow L$ be a Lie pair, 
 and assume that the restriction to $A$ of the anchor map $\rho:L\to TM$ has constant rank, i.e.  that the map $M\to \mathbb{N},x\mapsto\mathrm{dim}\big(\rho_x(A_x)\big)$ is constant. 
By suspending the anchor map, this implies that the \emph{adjoint complex} $A[1]\overset{\rho\circ sp}{\to}TM$ is a regular representation up to homotopy of $A$   \cite{abadRepresentationsHomotopyLie2011}. This homotopy $A$-module structure is induced from  a choice of a $TM$-connection $\nabla$ on $A$, which in turn defines a \emph{basic connection} \cite{crainicSecondaryCharacteristicClasses2005, abadRepresentationsHomotopyLie2011} by the equations 
\begin{equation*}
    \nabla^{bas}_a(b)=\nabla_{\rho(b)}(a)+[a,b]\;\; \text{and}\;\;\nabla^{bas}_a(X)=\rho(\nabla_{X}(a))+[\rho(a),X],
\end{equation*}
for $a,b\in\Gamma(A)$, $X\in\mathfrak{X}(M)$. 
The \emph{basic curvature} associated to $\nabla^{bas}$ is given by the following tensor:
\begin{equation*}
R^{bas}(a,b)(X)=\nabla_X([a,b])-[\nabla_X(a),b]-[a,\nabla_X(b)]+\nabla_{\nabla^{bas}_{a}(X)}(b)-\nabla_{\nabla^{bas}_{b}(X)}(a).
\end{equation*}
 The homotopy $A$-module structure on the adjoint complex $A\to TM$ is then given by the equations
\begin{align}
\nabla^{bas}\circ\rho&=\rho\circ\nabla^{bas},\label{eqchat1}\\
R_{\nabla^{bas}}+[\rho\circ sp,R^{bas}]&=0,\label{eqchat2}\\
d_A^{\nabla^{bas}}R^{bas}&=0.\label{eqchat3}
\end{align}

Since the anchor map has constant rank on $A$, its kernel $\mathrm{Ker}(\rho|_A)$ -- denoted $\mathfrak{g}(A)$ --  is a Lie algebroid representation of $A$ induced by the flat connection
\begin{equation*}
\nabla^{\mathfrak{g}}_a(b)=[a,b],
\end{equation*}
for $a\in\Gamma(A)$,  $b\in\Gamma(\mathfrak{g}(A))$.
Denoting $\iota:\mathfrak{g}(A)\to A$ the inclusion of the kernel of the anchor map into $A$, one immediately observes that $\iota(\nabla^{\mathfrak{g}}_a(b))=\nabla^{bas}_a(\iota(b))$. That is to say, the connection $\nabla^{\mathfrak{g}}$ is the restriction of the basic connection to $\mathfrak{g}(A)$. Moreover, restricting Equation \eqref{eqchat2} to $\mathfrak{g}(A)$, one obtains
\begin{equation*}
0=R_{\nabla^{bas}}(a,b)(\iota(c))+R^{bas}(a,b)(\rho(c))=R_{\nabla^{bas}}(a,b)(\iota(c)),
\end{equation*}
for any $a,b\in\Gamma(A)$ and $c\in\Gamma(\mathfrak{g}(A))$.
This is the flatness condition for $\nabla^{\mathfrak{g}}$.

The graded vector bundle \[E=\mathfrak{g}(A)[2]\oplus A[1]\oplus TM\] then becomes a regular representation up to homotopy of $A$, with differential $\partial\big|_{E_{-2}}=\iota\circ sp:\mathfrak{g}(A)[2]\to A[1]$ and $\partial\big|_{E_{-1}}=\rho\circ sp:A[1]\to TM$. It is controlled by Equations~\eqref{eqchat1}-\eqref{eqchat3}, as the equations
\begin{align}
\nabla^{\mathfrak{g}}\circ\iota&=\iota\circ\nabla^{bas},\label{eqchat4}\\
R_{\nabla^{\mathfrak{g}}}&=0,\label{eqchat5}
\end{align}
are automatically satisfied by definition of the basic connection. 
Notice however that, from the graded perspective, the curvature of the connection $\nabla^{\mathfrak{g}}$ being zero, there is no need to introduce any connection 2-form taking values in $\mathrm{Hom}(A,\mathfrak{g}(A))$ and the only connection 2-form is the basic curvature.

In the following we denote by $\nabla^E=\nabla^\mathfrak{g}+\nabla^{bas}$ the $A$-connection on $E$ which restricts to $\nabla^{\mathfrak{g}}$ on $\mathfrak{g}(A)[2]$ and to the basic connection on $A[1]\oplus TM$. The associated curvature is $R_{\nabla^E}=R_{\nabla^\mathfrak{g}}+R_{\nabla^{bas}}$, while the basic connection will now be denoted as $\omega^{(2)}$. Hence, Equations \eqref{eqchat1}-
\eqref{eqchat5} can be grouped into the equations
\begin{align}
[ d^{\nabla^E}_A,\partial]&=0,\\
R_{\nabla^E}+[\partial,\omega^{(2)}]&=0,\label{eqbegaud}\\
d_A^{\nabla^E}\omega^{(2)}&=0.\label{eqbegaudo}
\end{align} 
In particular, in this situation, we do not have a connection 3-form $\omega^{(3)}$.

Now let us extend the connection $\nabla^E$ and the connection 2-form $\omega^{(2)}$, to an $L$-connection on $E$ -- still denoted $\nabla^E$ -- and a 2-form on $L$ denoted $\omega^{(2)}_L$ and taking values in $\mathrm{End}_{-1}(E)$. It splits into two components which satisfies, when evaluated on two elements $a,b\in\Gamma(A)$:
\begin{equation*}
\omega_L^{(2),A\to \mathfrak{g}(A)}(a,b)=0\qquad\text{and}\qquad\omega_L^{(2),TM\to A}(a,b)=R^{bas}.
\end{equation*}
These results are obtained from Equations \eqref{eqchat2} and \eqref{eqchat5}.
In particular, one can always choose to extend $\omega^{(2)}$ to $L$ so that its $\mathrm{Hom}(A,\mathfrak{g}(A))$ component is identically zero, namely
\begin{equation}\label{eq:always}
\omega_L^{(2),A\to \mathfrak{g}(A)}(\ell,\ell')=0,
\end{equation}
for every $\ell,\ell'\in\Gamma(L)$.

 Then, the Atiyah cocycle of the representation up to homotopy $E$ is given by Equations~\eqref{eq:alpha0} and \eqref{eq:alpha4} which in the present case take the form
\begin{align}
    \alpha^{(0)}_E(l)&=[\nabla^E_\ell,\partial],\label{eqpizz1}\\
    \alpha^{(1)}_E(a;l)&=R_{\nabla^E}(a,\ell)+[\partial,\omega^{(2)}_L](a,\ell),\label{eqpizz2}\\
    \alpha^{(2)}_E(a,b;l)&=d_L^{\nabla^E} \omega^{(2)}_L(a,b,\ell),\label{eqpizz3}\\
    \alpha^{(3)}_E(a,b,c;l)&=0, \label{eqpizz4} 
\end{align}
for every $a,b,c\in\Gamma(A)$ and $l\in\Gamma(L/A)$ with preimage $\ell\in\Gamma(L)$. 
Notice that the element $\alpha^{(0)}_E(l)\in\Gamma(\mathrm{End}_{1}(E))$  splits into two components, one taking values in $\mathrm{Hom}(\mathfrak{g}(A),A)$ and denoted $\alpha^{(0), \mathfrak{g}(A)\to A}_{E}(l)$, while the second one takes values in $\mathrm{Hom}(A,TM)$ and is denoted $\alpha^{(0),A\to TM}_{E}(l)$. Since the inclusion map intertwines $\nabla^{\mathfrak{g}}$ and $\nabla^{bas}$ by \eqref{eqchat4}, we have that $\alpha^{(0),\mathfrak{g}(A)\to A}_{E}(l)=0$ while the term $\alpha^{(0),A\to TM}_{E}(l)$ does not necessarily vanish.

Furthermore, since $\mathrm{End}_0(E)\simeq\mathrm{End}(\mathfrak{g}(A))\oplus \mathrm{End}(A)\oplus \mathrm{End}(TM)$,  the first component $\alpha_E^{(1)}(a;l)\in\Gamma(\mathrm{End}_0(E))$ can be split into three terms, each one of them acting as an endomorphism of either $\mathfrak{g}(A)$, $A$ or $TM$. The component of $\alpha_E^{(1)}$ taking values in $\mathrm{End}(\mathfrak{g}(A))$ is denoted $\mathrm{at}^{\mathfrak{g}}\in\Omega^{1}(A,A^\circ\otimes\mathrm{End}(\mathfrak{g}(A)))$.
By the choice made in Equation \eqref{eq:always},  the second term $[\partial,\omega^{(2)}_L]$ on the right-hand side of Equation \eqref{eqpizz2} does not contribute to $\mathrm{at}^{\mathfrak{g}}$, and so~$\mathrm{at}^{\mathfrak{g}}$ is precisely the Atiyah cocycle of the $A$-module $\mathfrak{g}(A)$. Next, the term $\alpha_E^{(2)}(a,b;l)\in\Gamma(\mathrm{End}_{-1}(E))$ can be decomposed into two terms, one taking values in $\mathrm{Hom}(TM,A)$, and the other one taking values in $\mathrm{Hom}(A,\mathfrak{g}(A))$. The former   is $d^{\nabla^{bas}}_LR^{bas}(a,b,\ell)$ while the latter is identically zero, due to the choice made in Equation~\eqref{eq:always}. Eventually, in full generality, since there is no connection 3-form in the present situation the last term of the Atiyah cocycle should read:
\begin{equation}\label{eqprince}
\alpha^{(3)}_E(a,b,c;l)=\omega_L^{(2),A\to\mathfrak{g}(A)}\wedge \omega_L^{(2),TM\to A}(a,b,c,\ell),
\end{equation}
but from Equation \eqref{eq:always}, it is identically zero. 

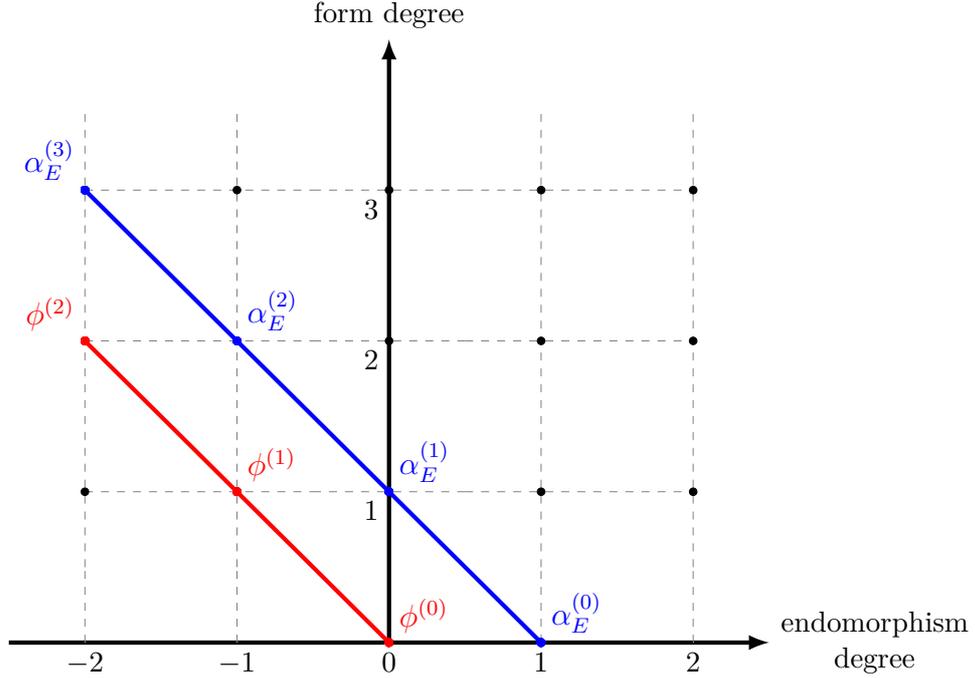
\begin{figure}[ht]
  \centering
  \begin{tikzpicture}[scale=1]
    \coordinate (Origin)   at (0,0);
    \coordinate (XAxisMin) at (1,0);
    \coordinate (XAxisMax) at (11,0);
    \coordinate (YAxisMin) at (6,0);
    \coordinate (YAxisMax) at (6,8);
    \draw [ultra thick, black,-latex] (XAxisMin) -- (XAxisMax) node [right] {\begin{tabular}[c]{@{}c@{}}endomorphism\\  degree\end{tabular}};
    \draw [ultra thick, black,-latex] (YAxisMin) -- (YAxisMax) node [above] {form degree};

    \clip (-1,-1) rectangle (12cm,7cm); 
    \coordinate (Bone) at (5,5);
    \coordinate (Btwo) at (1,7);
    \coordinate (B2) at (2,1);
    \coordinate (B3) at (2,3);
    \coordinate (B4) at (4,1);
    \coordinate (B5) at (0,5);
    \draw[style=help lines,dashed] (2,1) grid[step=2cm] (5,8);
    \draw[style=help lines,dashed] (7,1) grid[step=2cm] (10,8);
    \draw[style=help lines,dashed] (2,0) grid[step=2cm] (2,1);
    \draw[style=help lines,dashed] (4,0) grid[step=2cm] (4,1);
    \draw[style=help lines,dashed] (8,0) grid[step=2cm] (8,1);
    \draw[style=help lines,dashed] (10,0) grid[step=2cm] (10,1);
    \draw[style=help lines,dashed] (5,2) grid[step=2cm] (7,2);
    \draw[style=help lines,dashed] (5,4) grid[step=2cm] (7,4);
    \draw[style=help lines,dashed] (5,6) grid[step=2cm] (7,6);
    \draw[style=help lines,dashed] (5,8) grid[step=2cm] (7,8);
    
    \foreach \x in {1,...,5}{
      \foreach \y in {1,2,...,5}{
        \node[draw,circle,inner sep=1pt,fill] at (2*\x,2*\y) {};
        \node[draw,circle,inner sep=1pt,fill,blue] at (2,6){};
         \node[draw,circle,inner sep=1pt,fill,blue] at (4,4){};
          \node[draw,circle,inner sep=1pt,fill,blue] at (6,2){};
          \node[draw,circle,inner sep=1pt,fill,blue] at (8,0){};
          \node[draw,circle,inner sep=1pt,fill,red] at (2,4){};
        \node[draw,circle,inner sep=1pt,fill,red] at (4,2){};
          \node[draw,circle,inner sep=1pt,fill,red] at (6,0){};
      }
    }
    \node [below] at (6,0)  {$0$};
    \node [below] at (4,0)  {$-1$};
    \node [below] at (8,0)  {$1$};
     \node [below] at (2,0)  {$-2$};
      \node [below] at (10,0)  {$2$};
    \node [below left] at (6,2)  {$1$};
    \node [below left] at (6,4)  {$2$};
    \node [below left] at (6,6)  {$3$};

    \draw [ultra thick,blue] (2,6) -- (8,0);
    \draw [ultra thick,red] (2,4) -- (6,0);
    \node [above left,blue] at (2,6) {\large $\alpha_E^{(3)}$};
    \node [above right,blue] at (4,4) {\large $\alpha_E^{(2)}$};
    \node [above right,blue] at (6,2) {\large $\alpha_E^{(1)}$};
     \node [above right,blue] at (8,0) {\large $\alpha_E^{(0)}$};
      \node [above left,red] at (2,4) {\large $\phi^{(2)}$};
      \node [above right,red] at (4,2) {\large $\phi^{(1)}$};
    \node [above right,red] at (6,0) {\large $\phi^{(0)}$};
  \end{tikzpicture}
\caption{Representation of the bigraded space $\Omega^{\bullet}(A,A^\circ\otimes \mathrm{End}_\bullet(\mathfrak{g}(A)[2]\oplus A[1]\oplus TM))$ and the Atiyah cocycle $\alpha_E$. Since the representation up to homotopy $E$ is concentrated in degrees $-2,-1,0$, the width of the diagram is 5, while the height corresponds to the rank of $A$. 
The Atiyah cocycle $\alpha_E$ is exact whenever the Atiyah cocycle  $\mathrm{at}_\nu$ is exact.}   \label{figure22}
\end{figure}

The chain complex $(E_\bullet,\partial)$ is a resolution of the quotient bundle $\nu(A)=TM/\rho(A)$, so that we are meeting  the assumptions established in Section \ref{subway}. Denoting $p$ the chain map projecting $E$ onto $\nu(A)$,
    \begin{center}
\begin{tikzcd}[column sep=2cm,row sep=2cm]
0\ar[r, "0"]&  \mathfrak{g}(A)[2]\ar[r, "\partial_{-1}=\iota\circ sp"]\ar[d,"p" left] &A[1]\ar[d,"p" left] \ar[r, "\partial_0=\rho\circ sp"]& TM \ar[d, "p" left]\ar[r,"0"]&0 \\
0\ar[r, dashed,  "0"]&0\ar[r, dashed,  "0"]&0\ar[r, dashed,  "0"]& {\nu(A)} \ar[r, dashed,  "0"]&0
\end{tikzcd} 
\end{center}
there exists a distinguished flat $A$-connection  $\nabla^{\nu}$ on $\nu(A)$ defined similarly to the Bott connection \eqref{equationBott}. For $a\in\Gamma(A)$ and $X\in\mathfrak{X}(M)$, it is
\begin{equation*}
\nabla^\nu_a(p(X))=p([\rho(a),X]),
\end{equation*}
which means
\begin{equation}\label{eqcontrat}
\nabla^\nu_a(p(X))=p(\nabla^{bas}_a(X)).
\end{equation}
The map $p$ also intertwines the curvature of the basic connection and the curvature of $\nabla^\nu$: 
\begin{equation*}
p\circ R_{\nabla^{bas}}\big|_{TM}=R_{\nabla^\nu}\circ p.
\end{equation*}
More generally, $p\circ R_{\nabla^E}=R_{\nabla^\nu}\circ p$. Then,  one extends the $A$-connection $\nabla^{\nu}$ to $L$ -- still denoted $\nabla^\nu$ -- by generalizing Equation \eqref{eqcontrat} to sections of $L$:
\begin{equation}\label{eqcontrat2}
    \nabla^\nu_\ell(p(X))=p(\nabla^E_\ell(X)),
\end{equation}
for any $\ell\in\Gamma(L)$. This implies that the projection $p$ also intertwines $R_{\nabla^E}$ and $R_{\nabla^\nu}$ (here, with respect to the extended $L$-connections).
The Atiyah cocycle associated to the extended connection is
\begin{equation*}
\mathrm{at}^\nu(a;l)=R_{\nabla^\nu}(a,\ell)=[\nabla^\nu_a,\nabla^\nu_\ell]-\nabla^\nu_{[a,\ell]}.
\end{equation*}
We then have the result that the Atiyah class of $E$ does not contain more information than that of $\nu(A)$.

\begin{proposition}\label{atnu}
$[\alpha_E]=0$ if and only if $[\mathrm{at}^{\nu}]=0$.
\end{proposition}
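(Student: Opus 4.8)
The plan is to deduce this from the isomorphism of Theorem \ref{prop:BRST} rather than to redo by hand the block-by-block analysis used in the proof of Proposition \ref{propo}. First I would record the three structural facts that make Theorem \ref{prop:BRST} applicable to $E=\mathfrak{g}(A)[2]\oplus A[1]\oplus TM$: it is non-positively graded; the chain complex $(E_\bullet,\partial)$ is regular, because $\rho|_A$ has constant rank by hypothesis and hence both $\iota\circ sp$ and $\rho\circ sp$ have constant rank; and its cohomology is concentrated in degree $0$, where it equals $K:=\mathcal{H}^0(E,\partial)=TM/\rho(A)=\nu(A)$. Indeed $\mathcal{H}^{-2}(E,\partial)=\mathrm{Ker}(\iota\circ sp)=0$ since $\iota$ is injective, and $\mathcal{H}^{-1}(E,\partial)=\mathrm{Ker}(\rho\circ sp)/\mathrm{Im}(\iota\circ sp)=\mathfrak{g}(A)/\iota(\mathfrak{g}(A))=0$. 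Consequently Corollary \ref{cor:equivalence} applies and gives $[\alpha_E]=0 \Longleftrightarrow [\mathrm{at}_K]=0$, where $\mathrm{at}_K$ denotes the Atiyah cocycle of the Lie algebroid representation $K$ with respect to some extension to $L$ of the induced $A$-connection $\nabla^K$ on $K$ given by \eqref{eqproj}.

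The remaining point is to identify this induced representation, together with its Atiyah class, with the representation $\nu(A)$ and its Atiyah class $\mathrm{at}^\nu$ appearing in the statement. I would compute $\nabla^K$ directly from \eqref{eqproj}: for $e\in\Gamma(E_0)=\mathfrak{X}(M)$ one has $D_A(e)=d_A^{\nabla^E}(e)+\omega^{(2)}\wedge e$ because $\partial|_{E_0}=0$, and the $E_{-1}$-valued $2$-form $\omega^{(2)}\wedge e$ does not affect the degree-$0$ cohomology class since $\mathcal{H}^{-1}(E,\partial)=0$; hence $d_A^{\nabla^K}(\overline{e})=\overline{d_A^{\nabla^{bas}}(e)}$, which by \eqref{eqcontrat} is precisely $\nabla^\nu$. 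The same computation with $D_L$ in place of $D_A$ via \eqref{eqproj2} shows that the induced $L$-connection on $K$ is exactly the extension \eqref{eqcontrat2}, so that $\mathrm{at}_K$ for this choice of extension coincides with $\mathrm{at}^\nu=R_{\nabla^\nu}$. Since by Theorem \ref{main1}(2) the Atiyah class does not depend on the extension, this yields $[\mathrm{at}_K]=[\mathrm{at}^\nu]$ in $H^1\big(A,A^\circ\otimes\mathrm{End}(\nu(A))\big)$, and combining with the equivalence from the first paragraph completes the proof.

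I expect the main obstacle to be precisely the bookkeeping in the second step: one must check that neither $\partial$ nor the connection $2$-form $\omega^{(2)}$ (nor an eventual connection $3$-form, which by \eqref{eq:always}--\eqref{eqprince} may be taken to vanish) contaminates the cohomology class $\overline{D_L(e)}$, and that the splitting $\mathrm{End}_0(E)\simeq\mathrm{End}(\mathfrak{g}(A))\oplus\mathrm{End}(A)\oplus\mathrm{End}(TM)$ is compatible with $p$, so that the $\mathrm{End}(TM)$-block of $\alpha_E^{(1)}$ projects onto $R_{\nabla^\nu}$. This is essentially the content of the discussion following \eqref{eqpizz4} together with $p\circ R_{\nabla^E}=R_{\nabla^\nu}\circ p$, so no genuinely new computation is required, but it should be written out carefully. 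Alternatively, one could give a self-contained argument paralleling Proposition \ref{propo}, writing $\alpha_E=s(\phi)$ with $\phi=\phi^{(0)}+\phi^{(1)}+\phi^{(2)}$ and tracking block by block that exactness of $\mathrm{at}^\nu$ supplies the transverse part of $\phi^{(0)}$ while $\phi^{(1)},\phi^{(2)}$ only encode consistency with the homotopy $A$-module structure \eqref{eqchat1}--\eqref{eqchat3} and carry no further obstruction; the spectral-sequence route above is shorter.
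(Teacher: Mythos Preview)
Your proof is correct and takes a genuinely different route from the paper's. You invoke Corollary~\ref{cor:equivalence} directly, after verifying its hypotheses for $E=\mathfrak{g}(A)[2]\oplus A[1]\oplus TM$ and identifying the induced $A$-module $K$ with $(\nu(A),\nabla^\nu)$ via \eqref{eqproj}--\eqref{eqcontrat2}. This is the clean, abstract argument the paper in fact advertises in the introduction (where Propositions~\ref{propo} and~\ref{atnu} are called ``applications of the isomorphism of Theorem~\ref{prop:BRST}''), but the paper's actual proof of Proposition~\ref{atnu} does \emph{not} use that shortcut: instead it carries out the explicit constructive program you sketch in your final paragraph. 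It fixes a splitting of $0\to\mathfrak{g}(A)\to A\to TM\to\nu(A)\to 0$, then builds $\varphi^{(1)},\varphi^{(2)},\varphi^{(3)}$ block by block so that the modified superconnection $D_L'=\partial+d^{\nabla'^E}+\omega'^{(2)}_L\wedge\cdot+\omega'^{(3)}_L\wedge\cdot$ has identically vanishing Atiyah cocycle $\alpha'_E$. In particular the paper has to \emph{introduce} a nonzero connection $3$-form $\omega'^{(3)}_L=-\varphi^{(3)}$ (vanishing on $A$) to kill $\alpha'^{(2)}_E$, and then argue via $[\partial,\alpha'^{(3)}_E]=0$ and injectivity of $\iota$ that $\alpha'^{(3)}_E=0$ automatically.

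What your route buys is brevity and conceptual clarity: once Theorem~\ref{prop:BRST} is in hand, the three-term example reduces to a bookkeeping check. What the paper's route buys is an \emph{explicit} homotopy $A$-compatible $L$-superconnection, together with a worked illustration of where each piece of the homotopy $\phi=\phi^{(0)}+\phi^{(1)}+\phi^{(2)}$ comes from and why a nontrivial $\phi^{(2)}$ (equivalently a new $3$-form) is forced---information invisible in the spectral-sequence argument. Your closing remark about ``no genuinely new computation'' is slightly optimistic for the explicit route: the paper's derivation of \eqref{eqatiy1}--\eqref{eqatiy3} and of $\alpha'^{(3)}_E=0$ via \eqref{eqproxyyy} is real work, not just rewriting \eqref{eqchat1}--\eqref{eqchat3}. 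But since you take the abstract route, none of that is needed.
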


\begin{proof}
The implication $[\alpha_E]=0 \ \Longrightarrow \ [\mathrm{at}^{\nu}]=0$ is proven similarly as that in the proof of Proposition~\ref{propo}, so we only focus on the reverse implication. Based on Theorem~\ref{main1}, we will show that there exists a $A$-compatible $L$-superconnection. That is to say, that under an adequate change of connection, connection 2-form and connection 3-form, the corresponding Atiyah cocycle vanishes.
Thus, assume that the Atiyah cocyle $\mathrm{at}^\nu$ is exact, i.e. that there exists a connection form $\eta\in\Omega^1(L,\mathrm{End}(\nu(A)))$ vanishing on $A$ such that the modified connection $\nabla'^\nu=\nabla^\nu-\eta$ satisfies $
R_{\nabla'^\nu}(a,\ell)=0$.

A splitting of the exact sequence of vector bundles
\begin{center}
\begin{tikzcd}[column sep=1.5cm,row sep=1.5cm]
  0\ar[r]&\mathfrak{g}(A)\ar[r,"\iota"]&\ar[l,  "\sigma_2", bend left]A \ar[r,  "\rho"]&\ar[l,  "\sigma_1", bend left] TM \ar[r,  "p"]& \ar[l,  "\sigma_0", bend left]\nu(A)\ar[r]&0
\end{tikzcd} 
\end{center}
is a family of vector bundle morphisms $(\sigma_0,\sigma_1,\sigma_2)$, all of which have constant rank, and satisfying the two following properties:
\begin{enumerate}
\item 
$\mathrm{Im}(\sigma_i)=\mathrm{Ker}(\sigma_{i+1})$,  and so $TM=\mathrm{Ker}(\sigma_1)\oplus \mathrm{Im}(\rho)$  and $A=\mathfrak{g}(A)\oplus \mathrm{Im}(\sigma_1)$;
\item $p\circ \sigma_0=\mathrm{id}_{\nu(A)}$, $\rho\circ\sigma_1+\sigma_0\circ p=\mathrm{id}_{TM}$, $\iota\circ\sigma_2+\sigma_1\circ \rho=\mathrm{id}_{A}$ and $\sigma_2\circ \iota=\mathrm{id}_{\mathfrak{g}(A)}$.
\end{enumerate}
Under this choice of splitting, the bundle morphisms $\partial_{-1}\colon\mathfrak{g}(A)\to A$, $\partial_{0}\colon A\to TM$ and $p\colon TM\to\nu(A)$ have the following block-matrix form:
\begin{equation*}
    \partial_{-1}=\begin{pmatrix}
        \iota\\
      \  0\
    \end{pmatrix},\qquad \partial_0=\begin{pmatrix}
       \ 0 & 0\ \\
      \  0 &\rho \
    \end{pmatrix} \qquad\text{and}\qquad p=\begin{pmatrix}
       \ p & 0 \
    \end{pmatrix},
\end{equation*}
where we omitted (here and in the following) to write the suspension maps.

Since the connection $\nabla^E$ leaves invariant the kernel $\mathfrak{g}(A)$ in the Lie algebroid~$A$, the vector bundle morphism $\alpha^{(0),A\to TM}_{E}(l)$ induces a vector bundle morphism from $\mathrm{Im}(\sigma_1)$ 
to $\mathrm{Im}(\rho)\subset TM$.
Post-composing it with $-\sigma_1$, we then obtain an endomorphism of $\mathrm{Im}(\sigma_1)$, denoted $\kappa=-\sigma_1\circ \alpha^{(0),A\to TM}_{E}\in\Gamma(A^\circ\otimes\mathrm{End}(\mathrm{Im}(\sigma_1))$. In particular, the definition of~$\kappa$ has been made so that $[\nabla^E_\ell,\partial]\big|_A=-\rho\circ\kappa(\ell)$. Moreover, consider the one-form on~$L$ (vanishing on $A$) valued in the endomorphisms of $\mathrm{Im}(\sigma_0)=\mathrm{Ker}(\sigma_1)$, $\tau=\sigma_0\circ \eta\circ p$.
Using the direct sum decomposition of $TM$ and $A$, we can then define three connection forms $\varphi^{(1),TM\to TM}\in\Omega^1(L,\mathrm{End}(TM))$, $\varphi^{(1),A\to A}\in\Omega^1(L,\mathrm{End}(A))$, and $\varphi^{(1),\mathfrak{g}(A)\to \mathfrak{g}(A)}\in\Omega^1(L,\mathrm{End}(\mathfrak{g}(A)))$ which, in a block matrix form, are
\begin{equation*}
    \varphi^{(1),TM\to TM}=\begin{pmatrix}
\tau& 0\\ 
0 & \rho\circ \pi\circ \sigma_1
\end{pmatrix}, \quad     \varphi^{(1),A\to A}=\begin{pmatrix}
\chi& 0\\ 
0 &  \pi+\kappa
\end{pmatrix}\quad  \text{and}\quad  \varphi^{(1),\mathfrak{g}(A)\to \mathfrak{g}(A)}=\chi.
\end{equation*}
Here $\pi$ (resp. $\chi$)  is any section of $A^\circ\subset L^*$ taking values in $\mathrm{End}(\mathrm{Im}(\sigma_1))$ (resp. in $\mathrm{End}(\mathfrak{g}(A))$). 
In particular, one can choose $\pi=0$ and $\chi=0$.

Then, by construction,  we have the identity:
\begin{equation*}
    [\varphi^{(1)}(\ell),\partial]=\alpha_E^{(0)}(p(\ell)),
\end{equation*}
for indeed, if $a\in\Gamma(\mathfrak{g}(A))$, we have:
\begin{equation*}
     [\varphi^{(1)}(\ell),\partial](a)
     =\begin{pmatrix}
\chi& 0\\ 
0 &  \pi+\kappa
\end{pmatrix}\begin{pmatrix}
        \iota(a)\\
      \  0\
    \end{pmatrix}-\begin{pmatrix}
        \iota\\
      \  0\
    \end{pmatrix}\chi(a)\\
    =\begin{pmatrix}
        \chi(\iota(a))\\
      \  0\
    \end{pmatrix}-\begin{pmatrix}
        \iota(\chi(a))\\
      \  0\
    \end{pmatrix}=0,
\end{equation*}
while, if $a\in\Gamma(\mathrm{Im(\sigma_1)})$, we have:\begin{align*}
    [\varphi^{(1)}(\ell),\partial](a)
    &=\begin{pmatrix}
\tau(\ell)& 0\\ 
0 & \rho\circ \pi(\ell)\circ \sigma_1
\end{pmatrix}
\begin{pmatrix}
    0 & 0\\
    0 &\rho(a) 
    \end{pmatrix}
    -\begin{pmatrix}
     0 & 0\\
     0 &\rho \
    \end{pmatrix}
    \begin{pmatrix}
\chi(\ell)(a)& 0\\ 
0 &  (\pi(\ell)+\kappa(\ell))(a)
\end{pmatrix}\\
&=\begin{pmatrix}
0& 0\\ 
0 & \rho\circ \pi(\ell)(a-\iota\circ \sigma_2(a))
\end{pmatrix}-\begin{pmatrix}
0& 0\\ 
0 & \rho\circ \pi(\ell)(a)+\rho\circ \kappa(\ell)(a))
\end{pmatrix}\\
&=\begin{pmatrix}
0& 0\\ 
0 & \alpha_E^{(0),A\to TM}(p(\ell))(a)- \sigma^{(0)}\circ p\circ \alpha_E^{(0),A\to TM}(p(\ell))(a)
\end{pmatrix}.
\end{align*}
We used the fundamental identities satisfied by the splitting, and the fact that $\kappa=-\sigma_1\circ \alpha_E^{(0),A\to TM}$. 
The term $\rho\circ \pi(\ell)\circ\iota\circ \sigma_2(a)$ is  zero because $\sigma_2(a)=0$ as $a$ is a section of $\mathrm{Im}(\sigma_1)=\mathrm{Ker}(\sigma_2)$.
Moreover, $\alpha_E^{(0),A\to TM}(p(\ell))$ takes values in the image of the anchor map so post-composing with the projection $p$ cancels the contribution $\sigma^{(0)}\circ p\circ \alpha_E^{(0),A\to TM}(p(\ell))(a)$.

Then, from Equation \eqref{eqpizz1}, by setting $\nabla'^E=\nabla^E-\varphi^{(1)}$ one obtains that:
\begin{equation}
\alpha'^{(0)}_E(p(\ell))=[\nabla'^E_\ell,\partial]=0.\label{eqatiy0}
\end{equation}
Thus, we modified the connection so that the first component $\alpha'^{(0)}_E$ of the Atiyah cocycle associated to $\nabla'^E$ vanishes. Moreover, the sub-vector bundle $\mathfrak{g}(A)$ remains stable under the action of this connection.
Eventually, by definition of $\tau$ and of $\varphi^{(1),TM\to TM}$, we have that Equation \eqref{eqcontrat2} still holds for $\nabla'^E$ and $\nabla'^\nu$:
\begin{align*}
    p(\nabla'^E_\ell(X))
    &=p(\nabla^E_\ell(X))-\begin{pmatrix}
p\circ\tau(\ell)& 0\\ 
0 & p\circ\rho\circ \pi(\ell)\circ \sigma_1
\end{pmatrix}(X)\\
&=\nabla^\nu_\ell(p(X))-
\begin{pmatrix}
\eta(\ell)\circ p(X)& 0\\ 
0 & 0
\end{pmatrix}=\nabla'^\nu_\ell(p(X)),
\end{align*}
where we used the identity $p\circ \sigma_0=\mathrm{id}_{\nu(A)}$ as well as the fact that $p\circ\rho=0$.

Defining $\varphi^{(2)}$ is slightly more subtle than in Example \ref{subnormal}. The curvature of the new connection $\nabla'^E$ -- denoted $R_{\nabla'^E}$ --  splits into three components;  $R_{\nabla'^E}^{TM\to TM}$, $R_{\nabla'^E}^{A\to A}$ and $R_{\nabla'^E}^{\mathfrak{g}(A)\to \mathfrak{g}(A)}$. Using the splitting $A=\mathfrak{g}(A)\oplus\mathrm{Im}(\sigma_1)$, one can write the  second curvature in a block-matrix form similar to that appearing in Equation \eqref{eqmatrix1}:
\begin{equation*}
    R_{\nabla'^E}^{A\to A}=\begin{pmatrix}
R_{\nabla'^E}^{\mathfrak{g}(A)\to \mathfrak{g}(A)}& R_{\nabla'^E}^{\mathrm{Im}(\sigma_1)\to \mathfrak{g}(A)}\\
0 &  R_{\nabla'^E}^{\mathrm{Im}(\sigma_1)\to \mathrm{Im}(\sigma_1)}\
\end{pmatrix}.
\end{equation*}
The lower left block corresponds to $R_{\nabla'}^{\mathfrak{g}(A)\to \mathrm{Im}(\sigma_1)}$ and is identically zero because $\mathfrak{g}(A)$ is stable under the action of $\nabla'^E$.
Regarding the curvature element $R_{\nabla'^E}^{TM\to TM}$, we have the following block-matrix form with respect to the splitting $\mathrm{Ker}(\sigma_1)\oplus \mathrm{Im}(\rho)$:
\begin{equation*}
    R_{\nabla'^E}^{TM\to TM}=\begin{pmatrix}
 R_{\nabla'^E}^{\mathrm{Ker}(\sigma_1)\to \mathrm{Ker}(\sigma_1)}&0\\
   R_{\nabla'^E}^{\mathrm{Ker}(\sigma_1)\to \mathrm{Im}(\rho)}&R_{\nabla'^E}^{\mathrm{Im}(\rho)\to \mathrm{Im}(\rho)}\
\end{pmatrix}.
\end{equation*}

We now explain why the $\mathrm{Im}(\rho)\to \mathrm{Ker}(\sigma_1)$ sector, in the upper-right corner, is identically zero.
Recall that Equation \eqref{eqcontrat2} -- valid for the primed connections -- implies that the projection $p:TM\to\nu(A)$ intertwines $R_{\nabla'^\nu}$ and $R_{\nabla'^E}^{TM\to TM}$. In block matrix form, decomposing a vector field $X\in\mathfrak{X}(M)$ with respect to the splitting as $X=X^{\mathrm{Ker}(\sigma_1)}+X^{\mathrm{Im}(\rho)}$, one has:
\begin{align*}
R_{\nabla'^\nu}\big(p(X)\big)
&=\begin{pmatrix}
       \ p & 0 \
    \end{pmatrix}R_{\nabla'^E}^{TM\to TM}\begin{pmatrix}
        X^{\mathrm{Ker}(\sigma_1)} \\ X^{\mathrm{Im}(\rho)}
    \end{pmatrix}\\
    &=p\circ R_{\nabla'^E}^{\mathrm{Ker}(\sigma_1)\to \mathrm{Ker}(\sigma_1)}(X^{\mathrm{Ker}(\sigma_1)})+ p\circ  R_{\nabla'^E}^{\mathrm{Im}(\rho)\to \mathrm{Ker}(\sigma_1)}(X^{\mathrm{Im}(\rho)}).
\end{align*}
 From this, and since $p(X)=p(X^{\mathrm{Ker}(\sigma_1)})$, we deduce that $p\circ  R_{\nabla'^E}^{\mathrm{Im}(\rho)\to \mathrm{Ker}(\sigma_1)}=0$. Post-composing by $\sigma_0$, we have:
\begin{equation*}
    0=R_{\nabla'^E}^{\mathrm{Im}(\rho)\to \mathrm{Ker}(\sigma_1)}-\rho\circ \sigma_1\circ R_{\nabla'^E}^{\mathrm{Im}(\rho)\to \mathrm{Ker}(\sigma_1)}.
\end{equation*}
The last term vanishes because the curvature lands in the kernel of $\sigma_1$, hence the result. Moreover, from this discussion, we also deduce that
\begin{equation}\label{eq:durdur}
    R_{\nabla'^\nu}(p(X))=p\circ R_{\nabla'^E}^{\mathrm{Ker}(\sigma_1)\to \mathrm{Ker}(\sigma_1)}(X),
\end{equation}
for any vector field $X\in\mathfrak{X}(M)$.
 Now, recall that the Atiyah cocycle associated to $\nabla'^\nu$ is identically zero. Thus, post-composing Equation \eqref{eq:durdur} with $\sigma_0$ and evaluating the result on a pair $(a,\ell)$ we obtain
that $R_{\nabla'^E}^{\mathrm{Ker}(\sigma_1)\to \mathrm{Ker}(\sigma_1)}(a,\ell)=0$. In other words, the curvature $R_{\nabla'^E}^{TM\to TM}$, when evaluated on a pair $(a,\ell)$, has two empty blocks in the upper line.



In a similar manner, the connection 2-form $\omega^{(2)}_L$ splits into two sectors $\omega_L^{(2),A\to \mathfrak{g}(A)}\in\Omega^2(L,\mathrm{Hom}(A,\mathfrak{g}(A)))$ and $\omega_L^{(2),TM\to A}\in\Omega^2(L,\mathrm{Hom}(TM,A))$.
As justified above Equation~\eqref{eq:always}, the first component can be chosen to be zero. 
On the other hand, the component $\omega_L^{(2),TM\to A}$ has the following block-matrix form:
\begin{equation*}
    \omega^{(2),TM\to A}_L=\begin{pmatrix}
\omega^{(2),\mathrm{Ker}(\sigma_1)\to \mathfrak{g}(A)}_L&\omega^{(2),\mathrm{Im}(\rho)\to \mathfrak{g}(A)}_L\\
\omega^{(2),\mathrm{Ker}(\sigma_1)\to \mathrm{Im}(\sigma_1)}_L&\omega^{(2),\mathrm{Im}(\rho)\to \mathrm{Im}(\sigma_1)}_L 
\end{pmatrix}.
\end{equation*}
Although present in the matrix, the upper left will play no role in modifying Equation~\eqref{eqpizz2} as the block matrix form of $\partial_0$ will always cancel it out. This observation is also valid for the basic curvature: Equation \eqref{eqchat2} does not make use of all of the components of this tensor.
Then, define  $\varphi^{(2), A\to\mathfrak{g}(A)}$ as in Equation \eqref{eqmatrix2}:
\begin{equation*}
    \varphi^{(2), A\to\mathfrak{g}(A)}=\begin{pmatrix}
R_{\nabla'^E}^{\mathfrak{g}(A)\to \mathfrak{g}(A)}& {\begin{tabular}[c]{@{}c@{}} $R_{\nabla'^E}^{\mathrm{Im}(\sigma_1)\to \mathfrak{g}(A)}$\\
$\hspace{1.5cm}+\omega^{(2),\mathrm{Im}(\rho)\to \mathfrak{g}(A)}_L\circ \rho$\end{tabular}}\
\end{pmatrix}.
\end{equation*}
This is a two form on $L$ taking values in $\mathrm{Hom}(A,\mathfrak{g}(A))$.
Take then $\varphi^{(2),TM\to A}$ to be the following 2-form on $L$ valued in $\mathrm{Hom}(TM,A)$:
\begin{equation*}
    \varphi^{(2),TM\to A}=\begin{pmatrix}
 0&0\\
  {\begin{tabular}[c]{@{}c@{}}$\sigma_1\circ R_{\nabla'^E}^{\mathrm{Ker}(\sigma_1)\to \mathrm{Im}(\rho)}$ \\ $\hspace{1cm}+\omega^{(2),\mathrm{Ker}(\sigma_1)\to \mathrm{Im}(\sigma_1)}_L$\end{tabular}}&{\begin{tabular}[c]{@{}c@{}}$\frac{1}{2}\sigma_1\circ R_{\nabla'^E}^{\mathrm{Im}(\rho)\to \mathrm{Im}(\rho)}$\\ $\hspace{1cm}+\frac{1}{2}R_{\nabla'^E}^{\mathrm{Im}(\sigma_1)\to \mathrm{Im}(\sigma_1)}\circ \sigma_1$\\ $\hspace{1cm}+\omega^{(2),\mathrm{Im}(\rho)\to \mathrm{Im}(\sigma_1)}_L$\end{tabular}}\
\end{pmatrix}.
\end{equation*}
They both vanish whenever restricted to $A$, as is expected.

The anchor map commuting with connection $\nabla'^E$ (see Equation \eqref{eqatiy0}), together with the respective properties of $\sigma_0,\sigma_1$ and $\sigma_2$, implies that we have the following relationships:
\begin{align}
    R_{\nabla'^E}^{\mathrm{Im}(\rho)\to \mathrm{Im}(\rho)}&=\rho\circ R_{\nabla'^E}^{\mathrm{Im}(\sigma_1)\to \mathrm{Im}(\sigma_1)}\circ\sigma_1, &R_{\nabla'^E}^{\mathrm{Im}(\sigma_1)\to \mathrm{Im}(\sigma_1)}&= \sigma_1\circ R_{\nabla'^E}^{\mathrm{Im}(\rho)\to \mathrm{Im}(\rho)}\circ\rho, \label{eqport1}\\
    R_{\nabla'^E}^{\mathrm{Im}(\rho)\to \mathrm{Im}(\rho)}&=\rho\circ \sigma_1\circ  R_{\nabla'^E}^{\mathrm{Im}(\rho)\to \mathrm{Im}(\rho)},
    &R_{\nabla'^E}^{\mathrm{Im}(\sigma_1)\to \mathrm{Im}(\sigma_1)}&= R_{\nabla'^E}^{\mathrm{Im}(\sigma_1)\to \mathrm{Im}(\sigma_1)}\circ\rho \circ \sigma_1. \label{eqport2}
\end{align}
One can check that these equations imply the following identities, which should be interpreted in block matrix form:
\begin{align}
R_{\nabla'^E}^{\mathfrak{g}(A)\to \mathfrak{g}(A)}(a,\ell)&=\varphi^{(2), A\to \mathfrak{g}(A)}(a,\ell)\circ \partial_{-1},\label{eqatiy1}\\
R_{\nabla'^E}^{A\to A}(a,\ell)+\omega_L^{(2),TM\to A}(a,\ell)\circ \partial_0&=\partial_{-1}\circ\varphi^{(2), A\to \mathfrak{g}(A)}(a,\ell)\label{eqatiy2}\\
&\hspace{2.5cm}+\varphi^{(2), TM\to A}(a,\ell)\circ \partial_0,\nonumber\\
    R_{\nabla'^E}^{TM\to TM}(a,\ell)+\partial_0\circ\omega_L^{(2),TM\to A}(a,\ell)&=\partial_0\circ\varphi^{(2), TM\to A}(a,\ell)\label{eqatiy3}.
\end{align}
Then, setting $\omega'^{(2)}_L=\omega^{(2)}_L-\varphi^{(2)}$, Equations \eqref{eqatiy1}-\eqref{eqatiy3} are equivalent to 
\begin{equation}\label{eqatiy4}
    \alpha_E'^{(1)}(a;p(\ell))=R_{\nabla'^E}(a,\ell)+[\partial,\omega_L'^{(2)}](a,\ell)=0.
\end{equation}
In other words, the second component  of the Atiyah cocycle associated to $\nabla'^E$ and $\omega'^{(2)}_L$ vanishes.

Now, for the last two components of the Atiyah cocycle, let us define a 3-form on $L$ taking values in $\mathrm{Hom}(TM,\mathfrak{g}(A)$, denoted $\varphi^{(3)}$, in the following block matrix form:
\begin{equation*}
    \varphi^{(3)}=\begin{pmatrix}
d^{\nabla'^E}\omega_L'^{(2),\mathrm{Ker}(\sigma_1)\to \mathfrak{g}(A)}& d^{\nabla'^E}\omega_L'^{(2),\mathrm{Im}(\rho)\to \mathfrak{g}(A)}
\end{pmatrix}.
\end{equation*}
Whenever restricted to $A$, given that $\varphi^{(1)}$ and $\varphi^{(2)}$ vanish on $A$, this 3-form becomes:
\begin{equation*}
    \varphi^{(3)}(a,b,c)=\begin{pmatrix}
d^{\nabla^E}\omega^{(2),\mathrm{Ker}(\sigma_1)\to \mathfrak{g}(A)}(a,b,c)& d^{\nabla^E}\omega^{(2),\mathrm{Im}(\rho)\to \mathfrak{g}(A)}(a,b,c)
\end{pmatrix},
\end{equation*}
which identically vanish, by Equation \eqref{eqbegaudo}. 
Then, using Equation \eqref{eq:always}, the Bianchi identities for $R_{\nabla'^E}^{\mathfrak{g}(A)\to \mathfrak{g}(A)}$ and $R_{\nabla'^E}^{\mathrm{Im}(\sigma_1)\to \mathfrak{g}(A)}$, together with Equation \eqref{eqatiy0}, one has
\begin{equation*}
    d^{\nabla'^{E}}\omega_L'^{(2),A\to\mathfrak{g}(A)}
    =-d^{\nabla'^{E}}\varphi^{(2),A\to\mathfrak{g}(A)}
    =
    -\begin{pmatrix}
\ 0& d^{\nabla'^{E}}\omega_L^{(2),\mathrm{Im}(\rho)\to \mathfrak{g}(A)}\circ \rho
\end{pmatrix}=-\varphi^{(3)}\circ\partial_0.
\end{equation*}
On the other hand, using the Bianchi identity for $R_{\nabla'^E}^{\mathrm{Ker}(\sigma_1)\to\mathrm{Im}(\rho)}$, $R_{\nabla'^E}^{\mathrm{Im}(\rho)\to \mathrm{Im}(\rho)}$ and $R_{\nabla'^E}^{\mathrm{Im}(\sigma_1)\to \mathrm{Im}(\sigma_1)}$, together with Equation \eqref{eqatiy0} applied to Equations \eqref{eqport1} and \eqref{eqport2}, we deduce that:
\begin{equation*}
    d^{\nabla'^{E}}\omega_L'^{(2),TM\to A}
    =\begin{pmatrix}
 d^{\nabla'^{E}}\omega'^{(2),\mathrm{Ker}(\sigma_1)\to \mathfrak{g}(A)}_L& d^{\nabla'^{E}}\omega'^{(2),\mathrm{Im}(\rho)\to \mathfrak{g}(A)}_L\\
0&0
\end{pmatrix}=\partial_{-1}\circ\varphi^{(3)}.
\end{equation*}
Therefore, we can summarize these two identities under the following unique one:
\begin{equation*}
    d^{\nabla'^{E}}\omega_L'^{(2)}=[\partial,\varphi^{(3)}].
\end{equation*}
If we define  $\omega_L'^{(3)}=-\varphi^{(3)}$ -- understanding that $\omega_L^{(3)}=0$ -- the third component of the Atiyah cocycle associated to $\nabla'^E$, $\omega'^{(2)}_L$ and $\omega'^{(3)}_L$ then reads:
\begin{equation}\label{eqatiy5}
    \alpha'^{(2)}_E(a,b;p(\ell))=d^{\nabla'^{E}}\omega_L'^{(2)}(a,b,\ell)+[\partial,\omega_L'^{(3)}](a,b,\ell)=0.
\end{equation}

Now, we need only prove that the fourth component of the Atiyah cocycle associated to $\nabla'^E$, $\omega'^{(2)}_L$ and $\omega'^{(3)}_L$ vanishes. This component is the primed version of Equation \eqref{eqprince}, together with an additional term involving the connection 3-form:
\begin{equation}\label{eqprince2}
\alpha'^{(3)}_E(a,b,c;p(\ell))=d^{\nabla'^E}\omega_L'^{(3)}(a,b,c,\ell)+\omega_L'^{(2), A\to\mathfrak{g}(A)}\wedge \omega_L'^{(2), TM\to A}(a,b,c,\ell).
\end{equation}
There is no additional term as the graded vector bundle is concentrated in degree $-2,-1,0$. Let us apply the covariant derivative $d^{\nabla'^E}$ on Equation \eqref{eqatiy5} so that, using Equations \eqref{eqatiy0} and  \eqref{eqatiy4} which are valid on any subset of elements of the quadruple $(a,b,c,\ell)$, we have:
\begin{align*}
0
&=\big(d^{\nabla'^E}\big)^2\omega_L'^{(2)}(a,b,c,\ell)+d^{\nabla'^E}[\partial,\omega_L'^{(3)}](a,b,c,\ell)\\
&=-[[\partial,\omega_L'^{(2)}],z\omega_L'^{(2)}](a,b,c,\ell)-[\partial,d^{\nabla'^E}\omega_L'^{(3)}](a,b,c,\ell)\\
&=-\frac{1}{2}[\partial,[\omega_L'^{(2)},\omega_L'^{(2)}]](a,b,c,\ell)-[\partial,\alpha'^{(3)}_E](a,b,c;l)+[\partial,\omega_L'^{(2)}\wedge\omega_L'^{(2)}](a,b,c,\ell).
\end{align*}
The first and last terms cancel each other and we are left with:
\begin{equation}
[\partial,\alpha'^{(3)}_E](a,b,c;l)=0.\label{eqproxyyy}
\end{equation}
Since $\alpha'^{(3)}_E(a,b,c;l)$ is a vector bundle morphism from $TM$ to $\mathfrak{g}(A)$, evaluating Equation~\eqref{eqproxyyy} on a vector field $X\in\mathfrak{X}(M)$ gives:
\begin{equation*}
0=\partial_{-1}\circ \alpha'^{(3)}_E(a,b,c;l)(X)=\iota\circ \alpha'^{(3)}_E(a,b,c;l)(X).
\end{equation*}
The inclusion map being injective we deduce that the fourth component of the Atiyah cocycle vanishes, as desired.

Thus, we have shown that under the change of connection $\nabla^E\mapsto \nabla'^E$ and connection 2-form $\omega^{(2)}_L\mapsto \omega'^{(2)}_L$, together with adding a connection 3-form $\omega_L'^{(3)}$ (vanishing on $A$), 
their associated Atiyah cocycle becomes:
\begin{align*}
    \alpha'^{(0)}_E(l)&=[\nabla'^E_\ell,\partial]=0,\\
    \alpha'^{(1)}_E(a;l)&=R_{\nabla'^E}(a,\ell)+[\partial,\omega'^{(2)}_L](a,\ell)=0,\\
    \alpha'^{(2)}_E(a,b;l)&=d_L^{\nabla'^E} \omega'^{(2)}_L(a,b,\ell)+[\partial,\omega'^{(3)}_L](a,b,\ell)=0,\\
    \alpha'^{(3)}_E(a,b,c;l)&=d^{\nabla'^E}\omega_L'^{(3)}(a,b,c,\ell)+\omega_L'^{(2)}\wedge \omega_L'^{(2)}(a,b,c,\ell)=0, 
\end{align*}
for any $a,b,c\in\Gamma(A)$ and any $\ell\in\Gamma(L)$ projecting down to $l\in\Gamma(L/A)$.
That is to say, the Atiyah cocycle $\alpha_E'$ associated to the $L$-superconnection $D'_L=\partial+d^{\nabla'^E}+\omega_E'^{(2)}\wedge\cdot +\omega_E'^{(3)}\wedge \cdot$ is identically zero, so $D_L$ is $A$-compatible. By Theorem~\ref{main1}, it means that the Atiyah class $[\alpha'_E]=[\alpha_E]$  vanishes, which proves the claim of the proposition. Notice that any other choice of  splittings $\sigma_0,\sigma_1, \sigma_2$  does not change this result.\end{proof}

\begin{remark}
Notice that 
the Atiyah cocycle $\mathrm{at}^\nu$ being a coboundary was instrumental in proving Equation \eqref{eqatiy3}, and thus Equation \eqref{eqatiy4} too. Using this identity, we then could prove that $\alpha'^{(3)}_E=0$ too so, to prove that $[\alpha_E]=0$ we crucially used the  
fact that $[\mathrm{at}^\nu]=0$.
\end{remark}

\begin{remark}
From the graded geometric perspective, since $\varphi^{(1)}$, $\varphi^{(2)}$ and $\varphi^{(3)}$ identically vanish when restricted to $A$, they canonically induce elements  $\phi^{(0)}\in \Gamma(A^\circ\otimes \mathrm{End}_0(E))$, $\phi^{(1)}\in\Omega^1(A,A^\circ\otimes \mathrm{End}_{-1}(E))$ and $\phi^{(2)}\in\Omega^2(A,A^\circ\otimes \mathrm{End}_{-2}(E))$, respectively. Their definition has been made so that their sum  $\phi=\phi^{(0)}+\phi^{(1)}+\phi^{(2)}$ satisfies $\alpha_E=s(\phi)$, showing explicitly that $\alpha_E$ is a coboundary. 
\end{remark}

\section{Relationship with Atiyah classes of differential graded vector bundles}

This section is devoted to generalizing a result obtained for the normal complex in \cite{liaoAtiyahClassesTodd2023}, to representations up to homotopy.
\subsection{Generalities about graded geometry and \texorpdfstring{$Q$}{Q}-connections}
A graded manifold $\mathcal{M}$ is a locally ringed space whose structure sheaf $\mathcal{C}^\infty(\mathcal{M})$ is locally of the form $\mathcal{C}^\infty(U)\otimes\widehat{S}(V^*)$, where $U\subset M$ is an open subset of a given smooth manifold $M$ -- called the \emph{base} or \emph{body} -- and $\widehat{S}(V^*)$ stands for  the formal power series over the dual of a graded vector space $V=\bigoplus_{i\in\mathbb{Z}}V_i$. We refer to \cite{faironIntroductionGradedGeometry2017,kotovCategoryGradedManifolds2024} for an explanation of why one should use formal power series instead of mere polynomials (which are not local rings). We say that~$\mathcal{M}$ is positively (resp. negatively) graded if the formal power series over $V^*$ are positively (resp. negatively) graded, namely, if $V$ is a negatively (resp. positively) graded vector space.  If the graded manifold is (non-) positively or (non-) negatively graded, which will be the case in the present section, using formal power series is equivalent to using polynomials.

Following \cite[Remark 2.2.1]{mehtaSupergroupoidsDoubleStructures2006}, one should mainly consider graded vector spaces of finite dimension, such that all but a finite number of the $V_i$ are zero, and those which are not are finite dimensional. This choice allows performing tensor product operations in the category of graded vector spaces. This assumption is in line with the choice of graded vector bundles made in Section \ref{sec0}.
Batchelor's theorem establishes that every positively (resp. negatively) graded manifold is locally isomorphic to a split negatively (resp. positively) graded vector bundle over a smooth manifold. For a generalization of Batchelor's statement to gradings over the integers, we refer to \cite{kotovCategoryGradedManifolds2024}.
While in Section \ref{sec0} we worked with (split, finite dimensional) graded vector bundles over smooth manifolds, we refer to \cite{mehtaQalgebroidsTheirCohomology2009, mehtaAtiyahClassDgvector2015} for details about defining graded vector bundles over a graded manifold $\mathcal{M}$ in full generality. We will not need those apart from settling the conventions.

\begin{definition}
A \emph{differential graded (dg) manifold} or \emph{$Q$-manifold} is a graded manifold $\mathcal{M}$ equipped with a cohomological degree $+1$ vector field $Q$, i.e. such that $[Q,Q]=0$.
\end{definition}

\begin{definition}\cite{aschieriCurvatureTorsionCourant2021}\label{dg con} Let $\mathcal{E}$ be a graded vector bundle over the  dg manifold $(\mathcal{M},Q)$.
    A \emph{dg connection} (or \emph{$Q$-connection}) on $\mathcal{E} $ is a degree $+1$ vector field $Q_{\mathcal{E}}\in\mathfrak{X}(\mathcal{E^*})$ satisfying the following conditions:%
    \begin{enumerate}
        \item $Q_{\mathcal{E}}$ preserves $\Gamma(\mathcal{E})\subset \mathcal{C}^\infty(\mathcal{E}^*)$,
        \item $Q_{\mathcal{E}}$
        projects to $Q$, i.e $Q_{\mathcal{E}}|_{\mathcal{M}}=Q$.   
    \end{enumerate}
 The curvature of a dg connection $Q_{\mathcal{E}}$ on $\mathcal{E}$ is the degree 2 vector field on~$\mathcal{E^*}$ given by:
    \begin{equation*}
        R_{Q_{\mathcal{E}}}=Q_{\mathcal{E}}^2=\frac{1}{2}[Q_{\mathcal{E}},Q_{\mathcal{E}}].
    \end{equation*}
\end{definition}


\begin{lemma}\label{lemmameta} \cite{mehtaAtiyahClassDgvector2015} Let  $Q_{\mathcal{E}}$ be a $Q$-connection on $\mathcal{E}$.
  If the curvature of  $Q_\mathcal{E}$ is zero then $(\mathcal{E},Q_{\mathcal{E}})$ is a \emph{differential graded (dg) vector bundle} -- or  \emph{$Q$-bundle} -- i.e. a vector bundle is the category of dg manifolds. 
  \end{lemma}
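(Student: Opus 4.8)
The plan is to unwind the categorical meaning of ``vector bundle in the category of dg manifolds'' and to observe that the two defining conditions of a $Q$-connection, together with vanishing curvature, supply precisely the required compatibilities. Since $Q_{\mathcal{E}}$ is a degree $+1$ vector field on $\mathcal{E}^*$ and $[Q_{\mathcal{E}},Q_{\mathcal{E}}]=2R_{Q_{\mathcal{E}}}=0$, the pair $(\mathcal{E}^*,Q_{\mathcal{E}})$ is by definition a dg manifold. Condition (2) in Definition \ref{dg con}, $Q_{\mathcal{E}}|_{\mathcal{M}}=Q$, means exactly that $Q_{\mathcal{E}}$ and $Q$ are $\pi$-related along the bundle projection $\pi\colon\mathcal{E}^*\to\mathcal{M}$, i.e. $Q_{\mathcal{E}}\circ\pi^*=\pi^*\circ Q$ on $\mathcal{C}^\infty(\mathcal{M})$; hence $\pi$ is a morphism of dg manifolds.

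Next I would encode the vector bundle structure of $\mathcal{E}^*\to\mathcal{M}$ by the Euler (fibre-degree) vector field $\varepsilon$ on $\mathcal{E}^*$, the unique graded derivation of $\mathcal{C}^\infty(\mathcal{E}^*)$ vanishing on $\pi^*\mathcal{C}^\infty(\mathcal{M})$ and equal to the identity on the fibrewise-linear functions $\Gamma(\mathcal{E})\subset\mathcal{C}^\infty(\mathcal{E}^*)$. The fibrewise-polynomial functions on $\mathcal{E}^*$ form $\Gamma(S^\bullet\mathcal{E})$, which is generated as a $\mathcal{C}^\infty(\mathcal{M})$-algebra in weights $0$ and $1$, so any graded derivation preserving the weight-$0$ and weight-$1$ subspaces preserves the entire weight grading. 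Now $Q_{\mathcal{E}}$ preserves the weight-$0$ part by condition (2) (there it equals $Q$, which does not move fibre coordinates) and preserves the weight-$1$ part by condition (1) (it maps $\Gamma(\mathcal{E})$ into $\Gamma(\mathcal{E})$). Therefore $[\varepsilon,Q_{\mathcal{E}}]=0$: the homological vector field $Q_{\mathcal{E}}$ is linear.

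Finally I would invoke the standard dictionary between linear vector fields on a vector bundle and the bundle structure maps (Grabowski--Rotkiewicz, used in this form by Mehta \cite{mehtaAtiyahClassDgvector2015}): a homological vector field on $\mathcal{E}^*$ commuting with $\varepsilon$ is equivalent to making fibrewise addition $\mathcal{E}^*\times_{\mathcal{M}}\mathcal{E}^*\to\mathcal{E}^*$, scalar multiplication $\mathbb{R}\times\mathcal{E}^*\to\mathcal{E}^*$ and the zero section $\mathcal{M}\to\mathcal{E}^*$ into morphisms of dg manifolds. Together with the first paragraph this displays $\mathcal{E}^*\to\mathcal{M}$ --- equivalently, by dualization in the dg category, $\mathcal{E}\to\mathcal{M}$ --- as a vector bundle object in the category of dg manifolds, i.e. $(\mathcal{E},Q_{\mathcal{E}})$ is a dg vector bundle.

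The one step that deserves real care, and which I expect to be the main obstacle in a fully self-contained argument, is the last one: translating $[\varepsilon,Q_{\mathcal{E}}]=0$ into compatibility of $Q_{\mathcal{E}}$ with all the structure maps. One can either cite the Grabowski--Rotkiewicz characterization of linear vector fields, or argue directly in local fibre coordinates $(x^i,\xi^a)$, where linearity forces $Q_{\mathcal{E}}=Q^i(x)\,\partial_{x^i}+\xi^a\omega_a{}^b(x)\,\partial_{\xi^b}$, and then check that pullback along each of the three structure maps intertwines the corresponding homological vector fields; the curvature hypothesis $Q_{\mathcal{E}}^2=0$ is what guarantees that all of these are maps of dg (rather than merely graded) manifolds.
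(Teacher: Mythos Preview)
The paper does not give its own proof of this lemma; it is simply attributed to \cite{mehtaAtiyahClassDgvector2015} and stated without argument (there is a commented-out fragment in the source, but it concerns tensoriality of the curvature, not the present statement). So there is nothing to compare your proposal against.

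That said, your argument is sound and is essentially the standard one. The logic is exactly right: vanishing curvature makes $Q_{\mathcal{E}}$ homological on $\mathcal{E}^*$; condition~(2) makes the projection a dg map; conditions~(1) and~(2) together force $Q_{\mathcal{E}}$ to preserve the fibrewise weight grading, i.e.\ $[\varepsilon,Q_{\mathcal{E}}]=0$; and the Grabowski--Rotkiewicz/Mehta dictionary between linear vector fields and compatibility with the vector-bundle structure maps finishes the job. Your closing caveat is well placed: the only substantive step is the last translation, and either citing the linear-vector-field characterization or verifying it in local fibre coordinates, as you sketch, is the appropriate way to discharge it. One small remark: since $Q_{\mathcal{E}}$ lives on $\mathcal{E}^*$ by Definition~\ref{dg con}, the conclusion is most cleanly stated first for $\mathcal{E}^*$ and then transported to $\mathcal{E}$ by duality, exactly as you do.
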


A Lie algebroid $A$ can be realized as a differential graded manifold by shifting the degree of the fibers by $-1$. The corresponding graded manifold being denoted $A[1]$ is modelled on the vector bundle $\pi: A[1]\to M$. It is also known that the Lie algebroid differential corresponds to a cohomological vector field of degree $+1$.
Moreover, the space $\Omega(A,E)$ of $A$-forms valued in $E$ can be seen as the space of sections of the pull-back graded vector bundle $\mathcal{E}=\pi^*E\to A[1]$ over the dg manifold $A[1]$. By duality \cite{mehtaQalgebroidsTheirCohomology2009}, the sheaf of sections of $\mathcal{E}\to A[1]$ -- denoted $\Gamma_{A[1]}(\mathcal{E})$ -- is the space of linear functions on the dual graded vector bundle $\mathcal{E}^*=\pi^*E^*\to A[1]$, denoted $\mathcal{C}^\infty_{lin}(\mathcal{E}^*)$. Definition \ref{defquillen} of an $A$-superconnection on $E$ can now be understood from the differential graded geometric language.

\begin{lemma}
    The data of an  $A$-superconnection on the graded vector bundle $E$ is equivalent to the data of a dg connection on the dg vector bundle $\mathcal{E}=\pi^* E\to A[1]$.
\end{lemma}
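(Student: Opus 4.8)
The plan is to unwind both sides of the claimed equivalence into the same bookkeeping data and check they match degree by degree, which is essentially a dictionary translation between the "forms on $A$ valued in $E$" picture and the "functions on the graded manifold $\mathcal{E}^*$" picture. First I would recall the identifications already set up in the excerpt: $\mathcal{C}^\infty(A[1]) = \Omega(A)$, the Lie algebroid differential $d_A$ is the homological vector field $Q$ on $A[1]$, and $\Gamma_{A[1]}(\mathcal{E}) = \mathcal{C}^\infty_{lin}(\mathcal{E}^*) \cong \Omega(A,E)_\blacktriangle$ as a bigraded $\mathcal{C}^\infty(A[1])$-module, with the total degree on $\Omega(A,E)_\blacktriangle$ matching the cohomological degree of linear functions on $\mathcal{E}^*$. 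Under these identifications, a degree $+1$ vector field $Q_\mathcal{E}$ on $\mathcal{E}^*$ that (i) preserves the linear functions $\Gamma_{A[1]}(\mathcal{E}) \subset \mathcal{C}^\infty(\mathcal{E}^*)$ and (ii) restricts to $Q = d_A$ on the base $A[1]$, is exactly the same thing as a degree $+1$ operator on $\Gamma_{A[1]}(\mathcal{E})$ satisfying the Leibniz rule over $(\mathcal{C}^\infty(A[1]), d_A)$.

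Next I would argue each direction explicitly. Given an $A$-superconnection $D_A$: by Definition \ref{defquillen} it is a degree $+1$ operator on $\Omega(A,E)_\blacktriangle$ with $D_A(\eta\otimes u) = d_A(\eta)\otimes u + (-1)^{|\eta|}\eta\otimes D_A(u)$. This says precisely that $D_A$, viewed as acting on linear functions on $\mathcal{E}^*$, is a derivation of the module structure covering the derivation $d_A$ of $\mathcal{C}^\infty(A[1])$; such a module derivation extends uniquely to a genuine vector field $Q_\mathcal{E}$ on the total space $\mathcal{E}^*$ (one uses that $\mathcal{C}^\infty(\mathcal{E}^*)$ is, fibrewise, the completed symmetric algebra on $\Gamma_{A[1]}(\mathcal{E}^*)$, dual to $\Gamma_{A[1]}(\mathcal{E})$, so a derivation is determined by its action on generators — the fibrewise-linear functions and the pullbacks of $\mathcal{C}^\infty(A[1])$ — and the Leibniz rule for $D_A$ is exactly the compatibility needed for this to be well-defined and of degree $+1$). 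Condition (1) of Definition \ref{dg con} holds because $D_A$ maps linear functions to linear functions (its image stays in $\Omega(A,E)_\blacktriangle$), and condition (2) holds because the "constant in the fibre" part of the Leibniz rule is $d_A$ acting on $\Omega(A) = \mathcal{C}^\infty(A[1])$. Conversely, given a dg connection $Q_\mathcal{E}$, restrict it to the linear functions $\Gamma_{A[1]}(\mathcal{E}) = \Omega(A,E)_\blacktriangle$; condition (1) guarantees this restriction is well-defined, it has degree $+1$, and the derivation property of $Q_\mathcal{E}$ together with condition (2) gives back exactly the Leibniz rule of Definition \ref{defquillen}. These two assignments are mutually inverse because a vector field on $\mathcal{E}^*$ preserving linear functions is determined by its restriction to those functions and to $\mathcal{C}^\infty(A[1])$, and the latter is pinned down by condition (2).

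I would close by noting the two auxiliary remarks that make the statement clean: flatness is not part of the claim (the lemma is only about the bare superconnection / bare $Q$-connection, and indeed $(D_A)^2 = 0$ corresponds to $R_{Q_\mathcal{E}} = Q_\mathcal{E}^2 = 0$, recovering Lemma \ref{lemmameta}); and the decomposition $D_A = \sum_k D_A^{(k)}$ into arity-$k$ pieces corresponds to the decomposition of $Q_\mathcal{E}$ by polynomial degree in the fibre coordinates of $A[1]$, so that no information is lost. The main obstacle — really the only place requiring care — is the precise statement and proof that a degree $+1$ module-derivation of $\Gamma_{A[1]}(\mathcal{E})$ over $d_A$ extends uniquely to an honest degree $+1$ vector field on $\mathcal{E}^*$ preserving fibrewise-linear functions; this is a standard fact about graded vector bundles (derivations of the sheaf of functions that are determined on generators), and I would either cite \cite{mehtaQalgebroidsTheirCohomology2009, mehtaAtiyahClassDgvector2015} for it or spell out the one-line argument using that $\mathcal{C}^\infty(\mathcal{E}^*)$ is locally generated over $\mathcal{C}^\infty(A[1])$ by the components of a local frame of $\mathcal{E}$.
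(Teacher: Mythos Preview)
Your proposal is correct and follows essentially the same approach as the paper's own proof: both identify $\Gamma_{A[1]}(\mathcal{E})$ with $\Omega(A,E)_\blacktriangle$ and translate the Leibniz rule of Definition~\ref{defquillen} into the two defining conditions of a $Q$-connection via the derivation property and the projection to $d_A$. The paper's converse direction additionally writes out a local coordinate expression \eqref{2.35} decomposing $Q_{\mathcal{E}}$ by polynomial degree in the $A[1]$-fibre coordinates to identify the connection $k$-forms explicitly, whereas you handle this step abstractly via the ``determined on generators'' argument; both are adequate.
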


\begin{proof}
 $\Longrightarrow$ An $A$-superconnection $D_A\colon\Omega(A,E)\to \Omega(A,E)$ of total degree $+1$ is understood as a linear vector field on $\mathcal{E}^*$, i.e. a degree $+1$ vector field $D_A$ on $\mathcal{E}^*$ preserving $\mathcal{C}^\infty_{lin}(\mathcal{E}^*)$. Moreover, this vector field is compatible with the Lie algebroid differential $d_A$ in the sense that the canonical projection $\mathcal{P}:\mathcal{E}^*\to A[1]$ is a morphism of differential graded manifolds:
\begin{equation*}
    \mathcal{P}^*\circ d_A=D_A\circ \mathcal{P}^*.
\end{equation*}
The $A$-superconnection $D_A$ is thus  a $Q$-connection on $\mathcal{E}$. 

\noindent $\Longleftarrow$ Conversely, choose a $Q$-connection on $\mathcal{E}$. Under the identification 
\begin{equation*}
    \Gamma(\mathcal{E})\simeq \Omega(A)\otimes \Gamma(E)
\end{equation*}
the  dg-connection $Q_{\mathcal{E}}$ is determined by its action on $\Gamma( E)$. Since it is a degree 1 operator with respect to the total degree we have that the restriction on $\Gamma(E_i)$ is a map
\begin{align*}
    Q_{\mathcal{E}}\colon\Gamma(E_i)\longrightarrow \bigoplus_{k=0}^{\mathrm{rk}(A)}\Omega^k(A)\otimes \Gamma(E_{i-k+1}).
\end{align*}
We write $Q_{\mathcal{E}}$ in term of components: 
\begin{equation*}
    Q_{\mathcal{E}}=\sum_{k=0}^{\mathrm{rk}(A)}Q^{(k)}_{\mathcal{E}},
\end{equation*}
where $Q^{(k)}_{\mathcal{E}}\colon\Gamma(E_\bullet)\to \Omega^k(A)\otimes\Gamma(E_{\bullet-k+1})$. 
Observe that the component $Q^{(0)}_{\mathcal{E}}\colon\Gamma(E_\bullet)\to\Gamma(E_\bullet)$ is a vector bundle morphism $\partial:\Gamma(E_\bullet)\to\Gamma(E_{\bullet+1})$, while the component
$Q^{(1)}_{\mathcal{E}}\colon\Gamma(E_\bullet)\to\Omega^1(A)\otimes\Gamma(E_\bullet)$ is the exterior covariant derivative associated to an $A$-connection on $E$.

Introducing local coordinates $\{x^i,\xi^\alpha\}$ on $A[1]$ and fiber coordinates $\{s_i^l\}_{1\leq l\leq r_i}$  on $E$, where $r_i=\text{rk}(E_i)$, then $Q_\mathcal{E}$  is given by:
    %
\begin{equation}\label{2.35}
    Q_{\mathcal{E}}  =d_A+\Gamma_l^ps_i^l\frac{\partial}{\partial s_i^p}
      = d_A +(\Gamma_{\alpha l}^p\xi^\alpha s_i^l+\Gamma_{\beta\gamma l}^p\xi^\beta \xi^\gamma s_{i-1}^l+\underbrace{\cdots}_{\text{higher degree terms}})\frac{\partial}{\partial s_i^p},
\end{equation}
where $\Gamma^p_l\in C^\infty(A[1])\simeq\Omega(A)$ has degree $|\Gamma^p_l|=1+|s_p|-|s_l|$ and is a polynomial of $\xi$ variables, as shown on the right-hand side. The sum of all terms of polynomial degree $k$ define a connection $k$-form on $E$. Thus we indee obtain the data of an $A$-superconnection on $E$. 
\end{proof}

\begin{remark}
 If $ E$ is a vector bundle over $M$ concentrated in degree zero, terms of total degree other than one vanish and \eqref{2.35} reduces to
\begin{equation*}
    Q_{\mathcal{E}}=d_A+\Gamma_{\alpha l}^k \xi^\alpha s^l\frac{\partial}{\partial s^k},
\end{equation*}
which is the local expression of the covariant exterior derivative associated to an $A$-connection  on $E$.
\end{remark}

Definition \ref{repuptohom} of a representation up to homotopy of a Lie algebroid $A\overset{\pi}{\longrightarrow}M$ can also be understood in the differential graded geometric picture. 
A homotopy $A$-module structure on $E$ is an $A$-superconnection $D_A\colon\Omega(A,E)\to \Omega(A,E)$ squaring to zero. 
The homological vector field $D_A$ is thus  a $Q$-connection on $\mathcal{E}$, with vanishing curvature. By Lemma \ref{lemmameta}, it makes $\mathcal{E}^*$ a differential graded vector bundle over $A[1]$, which is equivalent to  $\mathcal{E}$ being a differential graded vector bundle over $A[1]$. This equivalence follows from $E^*$ being a representation up to homotopy of $A$ isomorphic to~$E$ \cite{abadRepresentationsHomotopyLie2011}.
 As a consequence of this, representations up to homotopy of $A$ correspond to flat $Q$-connections on $\mathcal{E}$.  
\begin{proposition}\cite[Lemma 4.4]{mehtaLieAlgebroidModules2014}\label{prop:equivalencehomot}
    Homotopy $A$-module structures on a (split, finite dimensional) graded vector bundle $E\to M$ are in one-to-one correspondence with differential graded vector bundle structures on $\mathcal{E}=\pi^*E\to A[1]$.
\end{proposition}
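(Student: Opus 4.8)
The plan is to assemble the statement from the two equivalences already established. First I would recall that, by Definition~\ref{repuptohom}, a homotopy $A$-module structure on $E$ is exactly an $A$-superconnection $D_A$ on $E$ with $(D_A)^2=0$, and that the Lemma established above equating $A$-superconnections on $E$ with dg connections on $\mathcal{E}=\pi^*E\to A[1]$ is a bijection $D_A\leftrightarrow Q_{\mathcal{E}}$. Under this bijection, $Q_{\mathcal{E}}$ acts on $\mathcal{C}^\infty_{lin}(\mathcal{E}^*)\simeq\Gamma(\mathcal{E})\simeq\Omega(A,E)$ as $D_A$, so its curvature $R_{Q_{\mathcal{E}}}=\tfrac{1}{2}[Q_{\mathcal{E}},Q_{\mathcal{E}}]$ of Definition~\ref{dg con} acts there as $(D_A)^2$; since a degree $2$, fibrewise-linear vector field on $\mathcal{E}^*$ covering $d_A$ on the base $A[1]$ is determined by its action on linear functions, $R_{Q_{\mathcal{E}}}=0$ if and only if $(D_A)^2=0$. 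Thus a flat $A$-superconnection produces a cohomological $Q$-connection, and by Lemma~\ref{lemmameta} the pair $(\mathcal{E}^*,Q_{\mathcal{E}})$ — equivalently $(\mathcal{E},Q_{\mathcal{E}})$ — is a dg vector bundle over $(A[1],d_A)$.

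For the converse I would use that a dg vector bundle structure on $\mathcal{E}\to A[1]$ is, by definition, a vector bundle object in the category of dg manifolds: it amounts to a homological degree $+1$ vector field $Q_{\mathcal{E}}$ on the total space whose flow preserves the linear structure (addition, scalar multiplication, zero section) and which covers $d_A$ on the base. These are precisely conditions (1)--(2) of Definition~\ref{dg con} together with flatness, so a dg vector bundle structure on $\mathcal{E}$ is the same datum as a flat $Q$-connection on $\mathcal{E}$; tracing it back through the bijection above yields a flat $A$-superconnection on $E$, that is, a homotopy $A$-module structure. The two assignments are mutually inverse because the bijection between $A$-superconnections and $Q$-connections is, and because on each side the flatness condition ($(D_A)^2=0$ resp.\ $R_{Q_{\mathcal{E}}}=0$) cuts out the corresponding sub-collection. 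The finite-dimensionality hypothesis on $E$ enters only to ensure that $\mathcal{E}=\pi^*E$ and $\mathcal{E}^*$ are honest (split, finite-dimensional) graded vector bundles over $A[1]$, so that the passage between $\mathcal{E}$ and $\mathcal{E}^*$, and the observation that $E^*$ is a representation up to homotopy isomorphic to $E$ \cite{abadRepresentationsHomotopyLie2011}, are legitimate.

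The one genuine obstacle is the converse direction: one must verify that the homological vector field underlying an abstract dg vector bundle structure is automatically \emph{linear} in the sense of Definition~\ref{dg con}(1) — that it preserves $\Gamma(\mathcal{E})\subset\mathcal{C}^\infty_{lin}(\mathcal{E}^*)$, rather than being an arbitrary homological vector field over $d_A$. This is forced by the compatibility of $Q_{\mathcal{E}}$ with the scalar multiplication $\mathbb{R}\times\mathcal{E}\to\mathcal{E}$, but making the argument precise requires the formalism of graded vector bundles over graded bases of \cite{mehtaQalgebroidsTheirCohomology2009,mehtaAtiyahClassDgvector2015}. Since this is exactly the content of \cite[Lemma 4.4]{mehtaLieAlgebroidModules2014}, I would cite it for this point and present the remainder as the bookkeeping above.
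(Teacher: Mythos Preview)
Your proposal is correct and follows essentially the same approach as the paper: both use the preceding Lemma (identifying $A$-superconnections with $Q$-connections on $\mathcal{E}$), observe that flatness on one side corresponds to vanishing curvature on the other, and invoke Lemma~\ref{lemmameta} together with the dual passage $\mathcal{E}\leftrightarrow\mathcal{E}^*$ via \cite{abadRepresentationsHomotopyLie2011}. You are in fact more careful than the paper's sketch in isolating the one nontrivial point in the converse direction---that a dg vector bundle structure forces linearity of $Q_{\mathcal{E}}$---and correctly defer it to \cite{mehtaLieAlgebroidModules2014}.
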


\subsection{Atiyah classes of dg vector bundles and homological perturbation}

Differential graded (dg) Lie algebroids are dg vector bundles whose homological vector field is linear (it preserves the arity of functions) and which are equipped with a compatible Lie algebroid structure:

\begin{definition} \cite[Definition 2.4.26]{mehtaSupergroupoidsDoubleStructures2006}
A dg vector bundle $(\mathcal{L},\mathcal{Q})$ over a dg manifold $(\mathcal{M},Q)$ is a dg Lie algebroid if the two following conditions are satisfied:
\begin{enumerate}
    \item \textbf{linearity:} $\mathcal{Q}$ preserves fiberwise linear functions on $\mathcal{L}$\footnote{This linearity condition implies that $\mathcal{Q}$ is invariant under the (de)suspension operator so that for any $j\in\mathbb{Z}$, it canonically induces a linear homological vector field $\mathcal{Q}[j]$ (itself) on $\mathcal{M}[j]$.}:
    \begin{equation*}
        \mathcal{Q}(\mathcal{L}^*)\subset \mathcal{L}^*,
    \end{equation*}
    \item \textbf{compatibility:} on $\mathcal{L}[1]$, $\mathcal{Q}[1]$ and the Chevalley-Eilenberg differential $d_\mathcal{L}$ commute:
    \begin{equation*}
        [d_{\mathcal{L}},\mathcal{Q}[1]]=0.
    \end{equation*}
\end{enumerate}

\end{definition}

Let $(\mathcal{L}, \mathcal{Q})$ be a dg Lie algebroid, defined over a dg manifold $(\mathcal{M},Q)$, and let $(\mathcal{E},Q_{\mathcal{E}})$ be a dg vector bundle over $\mathcal{M}$. We denote the sections of the graded vector bundle $\mathcal{E}$ over~$\mathcal{M}$ as $\Gamma_{\mathcal{M}}(\mathcal{E})$. A $\mathcal{L}$-connection on $\mathcal{E}$ is a degree 0 map $\nabla:\Gamma_{\mathcal{M}}(\mathcal{L})\times\Gamma_{\mathcal{M}}(\mathcal{E})\to \Gamma_{\mathcal{M}}(\mathcal{L})$ satisfying the usual identities:
\begin{align*}
    \nabla_{fX}(e)&=f\nabla_X(e),\\
    \nabla_X(fe)&=\rho(X)(f)\,e+ (-1)^{|f||X|}f\nabla_X(e),
\end{align*}
for every $f\in\mathcal{C}^\infty(\mathcal{M})$,  $X\in\Gamma_{\mathcal{M}}(\mathcal{L})$ and $e\in\Gamma_{\mathcal{M}}(\mathcal{E})$. The following Definition and Proposition were first introduced in \cite{mehtaAtiyahClassDgvector2015}.

\begin{definition}
    Given a $\mathcal{L}$-connection on $\mathcal{E}$, the \emph{Atiyah cocycle} associated to $\nabla$ is the bundle map $\mathrm{At}_{\mathcal{E}}\in\mathcal{L}^*\otimes\mathrm{End}(\mathcal{E})$ defined by:
\begin{equation*}
    \mathrm{At}_{\mathcal{E}}(X,e)=\mathcal{Q}_{\mathcal{E}}(\nabla_{X}(e))-(-1)^{|X|}\nabla_X(\mathcal{Q}_{\mathcal{E}}(e))- \nabla_{\mathcal{Q}(X)}(e).
\end{equation*}
\end{definition}

\begin{proposition}
    Let $\mathfrak{Q}=\mathcal{Q}+[\mathcal{Q}_{\mathcal{E}},.\,]$ be the induced homological vector field on the graded manifold $\mathcal{L}^*\otimes\mathrm{End}(\mathcal{E})$. Then
    \begin{enumerate}
        \item $\mathrm{At}_{\mathcal{E}}$ is a $\mathfrak{Q}$-cocycle of degree $1$;
        \item the cohomology class of $\mathrm{At}_{\mathcal{E}}$ does not depend on the choice of connection $\nabla$.
    \end{enumerate}
\end{proposition}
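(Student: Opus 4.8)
The plan is to follow, \emph{mutatis mutandis}, the argument of Mehta--Stiénon--Xu \cite{mehtaAtiyahClassDgvector2015}, replacing $T\mathcal{M}$ by the dg Lie algebroid $\mathcal{L}$. As a preliminary step I would check that $\mathrm{At}_{\mathcal{E}}$ is a genuine section of $\mathcal{L}^\ast\otimes\mathrm{End}(\mathcal{E})$ of total degree $+1$, i.e. that it is $\mathcal{C}^\infty(\mathcal{M})$-linear in the graded sense in both slots. This is a direct computation using the Leibniz rules $\nabla_{fX}e=f\nabla_X e$, $\nabla_X(fe)=\rho(X)(f)\,e+(-1)^{|f||X|}f\nabla_X e$, the fact that $\mathcal{Q}_{\mathcal{E}}$ is a derivation over $Q$ (since $\mathcal{Q}_{\mathcal{E}}|_{\mathcal{M}}=Q$), and $\mathcal{Q}(fX)=Q(f)X+(-1)^{|f|}f\mathcal{Q}(X)$ (the latter because $\mathcal{Q}$ is a homological vector field on $\mathcal{L}$ preserving fibrewise-linear functions and projecting to $Q$): the terms proportional to $Q(f)$ cancel against those coming from $\nabla_{\mathcal{Q}(fX)}$, leaving $\mathrm{At}_{\mathcal{E}}(fX,e)=(-1)^{|f|}f\,\mathrm{At}_{\mathcal{E}}(X,e)$ and likewise for $fe$; the degree count is immediate. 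It is convenient to record the operatorial form $\mathrm{At}_{\mathcal{E}}(X)=[\mathcal{Q}_{\mathcal{E}},\nabla_X]-\nabla_{\mathcal{Q}(X)}$, where $\nabla_X$ is regarded as a degree $|X|$ operator on $\Gamma_{\mathcal{M}}(\mathcal{E})$ and $[\cdot,\cdot]$ is the graded commutator of endomorphisms.

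For item $1$, note first that $\big(\mathcal{L}^\ast\otimes\mathrm{End}(\mathcal{E}),\mathfrak{Q}\big)$ is itself a dg vector bundle over $(\mathcal{M},Q)$ --- the tensor product of the dg vector bundles $(\mathcal{L}^\ast,\mathcal{Q})$ and $(\mathrm{End}(\mathcal{E}),[\mathcal{Q}_{\mathcal{E}},\cdot\,])$ --- so $\mathfrak{Q}^2=0$ on $\Gamma_{\mathcal{M}}\big(\mathcal{L}^\ast\otimes\mathrm{End}(\mathcal{E})\big)$ and the cocycle condition is meaningful. Evaluating on $X\in\Gamma_{\mathcal{M}}(\mathcal{L})$ and $e\in\Gamma_{\mathcal{M}}(\mathcal{E})$ one has, up to Koszul signs, $(\mathfrak{Q}\,\mathrm{At}_{\mathcal{E}})(X)=[\mathcal{Q}_{\mathcal{E}},\mathrm{At}_{\mathcal{E}}(X)]\pm\mathrm{At}_{\mathcal{E}}(\mathcal{Q}X)$; substituting $\mathrm{At}_{\mathcal{E}}(X)=\mathcal{Q}_{\mathcal{E}}\nabla_X-(-1)^{|X|}\nabla_X\mathcal{Q}_{\mathcal{E}}-\nabla_{\mathcal{Q}X}$ and expanding, every resulting term either contains a repeated application of one of $\mathcal{Q}_{\mathcal{E}}$ (on $\Gamma(\mathcal{E})$), $[\mathcal{Q}_{\mathcal{E}},\cdot\,]$ (on $\mathrm{End}(\mathcal{E})$), or $\mathcal{Q}$ (on $\Gamma(\mathcal{L})$) --- all of which square to zero --- or else pairs off with exactly one other term, and the whole expression collapses to $0$. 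Alternatively, and parallel to the generalized Bianchi identity of Lemma \ref{prop:Bianchi}, one may observe that $\mathrm{At}_{\mathcal{E}}$ is the arity-$1$ component of the ``curvature'' $(\mathcal{Q}_{\mathcal{E}}+d^{\nabla}_{\mathcal{L}})^2$ of the degree $+1$ operator $\mathcal{Q}_{\mathcal{E}}+d^{\nabla}_{\mathcal{L}}$ on $\Gamma_{\mathcal{M}}(\wedge^\bullet\mathcal{L}^\ast\otimes\mathcal{E})$, and that $\mathfrak{Q}(\mathrm{At}_{\mathcal{E}})=0$ is the corresponding component of the identity $\big[\mathcal{Q}_{\mathcal{E}}+d^{\nabla}_{\mathcal{L}},[\mathcal{Q}_{\mathcal{E}}+d^{\nabla}_{\mathcal{L}},\mathcal{Q}_{\mathcal{E}}+d^{\nabla}_{\mathcal{L}}]\big]=0$. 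The only real nuisance here is the sign bookkeeping; there is no conceptual obstacle.

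For item $2$, let $\nabla,\nabla'$ be two $\mathcal{L}$-connections on $\mathcal{E}$ and put $\phi=\nabla-\nabla'$, a degree-$0$ section of $\mathcal{L}^\ast\otimes\mathrm{End}(\mathcal{E})$. Subtracting the two Atiyah cocycles and using the operatorial form gives $\mathrm{At}^{\nabla}_{\mathcal{E}}(X)-\mathrm{At}^{\nabla'}_{\mathcal{E}}(X)=\mathcal{Q}_{\mathcal{E}}\,\phi(X)-(-1)^{|X|}\phi(X)\,\mathcal{Q}_{\mathcal{E}}-\phi(\mathcal{Q}X)=[\mathcal{Q}_{\mathcal{E}},\phi(X)]-\phi(\mathcal{Q}X)$, which is exactly $(\mathfrak{Q}\phi)(X)$. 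Hence $\mathrm{At}^{\nabla}_{\mathcal{E}}-\mathrm{At}^{\nabla'}_{\mathcal{E}}=\mathfrak{Q}(\phi)$ is $\mathfrak{Q}$-exact, so the two cocycles represent the same class in $H^1\big(\Gamma_{\mathcal{M}}(\mathcal{L}^\ast\otimes\mathrm{End}(\mathcal{E})),\mathfrak{Q}\big)$, which proves the independence of the connection and completes the argument.
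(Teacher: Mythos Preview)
Your argument is correct. The paper does not supply its own proof of this proposition: it is stated as a recall from \cite{mehtaAtiyahClassDgvector2015} (``The following Definition and Proposition were first introduced in \cite{mehtaAtiyahClassDgvector2015}''), and what you have written is precisely the standard Mehta--Sti\'enon--Xu computation transported from $T\mathcal{M}$ to a general dg Lie algebroid $\mathcal{L}$, so there is nothing to compare against beyond the cited source.
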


Let $A\hookrightarrow L$ be a Lie pair, and denote by $\mathcal{L}$ the pull-back Lie algebroid through $\pi:A[1]\to M$ of the Lie algebroid $L$ \cite{mackenzieGeneralTheoryLie2005}, namely $\mathcal{L}=\pi^!L$ is a graded vector bundle over $A[1]$. This graded manifold, originally introduced by Sti\'enon, Vitagliano and Xu \cite{stienonAinfinityAlgebrasLie2022}, turns out to be a dg Lie algebroid, whose homological vector field we denote by $\mathcal{Q}$.
Using these data,  the authors in \cite{stienonAinfinityAlgebrasLie2022} establish a bijection between $\Omega^\bullet(A,A^{\circ}\otimes\mathrm{End}(L/A))$ and $\Gamma_{A[1]}(\mathcal{L}^*\otimes\mathrm{End}(\mathcal{L}))$, inducing a map sending the Atiyah class of the Bott representation to the Atiyah class of the dg vector bundle $\mathcal{L}\to A[1]$.

In \cite{liaoAtiyahClassesTodd2023}, the author simplifies the proof of the former by summoning a more straightforward homological perturbation theoretic argument relying on a splitting of the following diagram
\begin{center}
\begin{tikzcd}[column sep=2cm,row sep=1.5cm]
  0\ar[r]&A[1] \ar[r,  "\iota\circ sp" above]\ar[d,  "p" left, shift right]&  L \ar[r]\ar[d,  "p" left, shift right]& 0\\
  0\ar[r, dashed,  "0"]&0\ar[r, dashed,  "0"]&L/A\ar[r, dashed,  "0"]&0
\end{tikzcd} 
\end{center}
In the present section, we will generalize the result in \cite{liaoAtiyahClassesTodd2023}, valid for the normal complex and the Bott representation, to a wide class of  Lie algebroid representations of the Lie algebroid~$A$. Namely,
those representations $K$ who are resolved by regular homotopy $A$-modules $(E,\partial)$, introduced in Section \ref{subway}: 

\begin{center}
\begin{equation}\label{basicdiag}
\begin{tikzcd}[column sep=2cm,row sep=1.5cm]
  \ldots\ar[r,  "\partial"]&E_{-2}\ar[d,"\phi" left]\ar[r,  "\partial"]&E_{-1}\ar[d,"\phi" left] \ar[r,  "\partial"]& E_{0}\ar[d,"\phi" left] \ar[r,  "0"]& 0 \\
 \ldots\ar[r, dashed,  "0"]&0\ar[r, dashed,  "0"]&0\ar[r, dashed,  "0"]& K\ar[r, dashed,  "0"]& 0 
\end{tikzcd} 
\end{equation}
\end{center}

Recall that a \emph{contraction} -- or \emph{strong deformation retract} -- consists of two chain complexes $(V,d_V)$ and $(W,d_W)$, together with two chain maps $\phi:V\to W$ and $\sigma:W\to V$, and a degree $-1$ map of vector spaces $\theta: V\to V$, such that:
\begin{enumerate}
    \item $W$ is a \emph{retract} of $V$, so $\phi\circ \sigma=\mathrm{id}_W$,
    \item $\theta$ is a \emph{chain homotopy} between $\mathrm{id}_V$ and $\sigma \circ \phi$, namely:
    \begin{equation*}
        \mathrm{id}_V-\sigma\circ \phi=[d_V,\theta]\,,
    \end{equation*}
    \item the \emph{side conditions} hold:
    \begin{equation*}
        \theta^2=0\,,\qquad\theta\circ \sigma=0\,\qquad \phi\circ \theta=0.
    \end{equation*}
\end{enumerate}
We draw these data under the compact form
\begin{equation*}
\begin{tikzcd}
\big(W,d_W\big)
\arrow[r, "\sigma", shift left,hook] &  \big(V, d_V\big)
\arrow[l, "\phi", shift left,two heads] \arrow[loop, "\theta",out=12,in= -12,looseness = 3]
\end{tikzcd}
\end{equation*}
and use it as a shorthand notation for contraction data.

In the following, we denote $\nabla^K$ the flat Lie algebroid $A$-connection on $K$ inducing the $A$-module structure on $K$ (with associated exterior covariant derivative $d_A^{\nabla^K}$), and $D_A$ the flat $A$-superconnection on $E$ inducing the homotopy $A$-structure,  so that the compatibility condition \eqref{eqproj} holds. We denote $d^K$ the exterior covariant derivative associated to the $A$-module structure on $A^\circ\otimes\mathrm{End}(K)$. 
We denote $\mathcal{E}=\pi^*E$ the pull-back bundle of $E$ along the projection $\pi:A[1]\to M$. 
We will show that we have the contraction data
\[\begin{tikzcd}
  \hspace{1.3cm}    \left(\Omega\big(A,A^\circ\otimes\mathrm{End}(K)\big),d^K\right) \arrow[r, shift left=0.75ex, hook] & \arrow[l, shift left=0.75ex, ->>] \left(\Gamma_{A[1]}\big(\mathcal{L}^*\otimes\mathrm{End}(\mathcal{E})\big),\mathcal{Q}+[D_A,.\,]\right)\hspace{3.5cm} \ar[loop,out=20,in=-20,distance=30]
\end{tikzcd}\]
which is such that, whenever $K=L/A$ and $E=A[1]\oplus L$ -- so $\mathcal{E}\simeq\mathcal{L}$ -- we recover the contraction data presented in \cite{liaoAtiyahClassesTodd2023}.

 Recall that the chain complex $(E,\partial)$ being a finite length resolution of the vector bundle $K$, there exists a section $\sigma:K_\bullet\to E_\bullet$ and a map $\theta:E_\bullet\to E_{\bullet-1}$, such that $\phi:(E_\bullet,\partial)\to (K_\bullet,0)$ is a homotopy equivalence with homotopy inverse $\sigma$ and chain homotopy~$\theta$ (see Proposition \ref{prop:quasiso}).
 Denote $\mathcal{K}=\pi^*K$ the pull-back bundle of $K$  through the projection $\pi:A[1]\to M$. The morphisms $\partial$, $\theta$, $\phi$ and $\sigma$ can be pulled back too, inducing graded vector bundle morphisms (over $A[1]$) satisfying similar properties as the former with respect to $\mathcal{K}$ and $\mathcal{E}$. We also denote them $\partial$, $\theta$, $\phi$ and $\sigma$, except that they now become $\mathcal{C}^\infty(A[1])\simeq\Omega(A)$-linear. It is straightforward to see that we have the following contraction data
\[\begin{tikzcd}
      \left(\Gamma_{A[1]}(\mathcal{K}),0\right) \arrow[r, shift left=0.75ex, "\sigma", hook] & \arrow[l, shift left=0.75ex, "\phi", ->>] \left(\Gamma_{A[1]}(\mathcal{E}),\partial\right) \ar[loop,out=10,in=-10,distance=20, "\theta"]
    \end{tikzcd}\]

    Now we only need proving similar results to \cite[Lemma 2.6, Theorem 2.7]{liaoAtiyahClassesTodd2023}, in the present context. Set
    \begin{equation*}
   d=D_A-\partial\qquad \text{and}\qquad F^q(E)=\bigoplus_{i\geq q} \Gamma_{A[1]}(\mathcal{E}_{-i})\quad \text{for all $q$.}
    \end{equation*}
Let also  $\varsigma, \varphi, \vartheta$ be the following perturbed contraction operators: 
\begin{equation*}
\vartheta=\theta_d=\sum_{k=0}^\infty \theta(-d\theta)^k\,,\quad
\varsigma=\sigma_d=\sum_{k=0}^\infty(-\theta d)^k\sigma\,,\quad\text{and}\quad
\varphi=\phi_d=\sum_{k=0}^\infty\phi(-d\theta)^k.
\end{equation*}
  
\begin{lemma}
The operator $d$ is a perturbation of $(\Gamma_{A[1]}(\mathcal{E}),\partial)$ and satisfies the property:
\begin{equation*}
d\circ\theta\left(F^q(E)\right)\subset F^{q+1}(E),\quad \text{for all $q$}.
\end{equation*}
\end{lemma}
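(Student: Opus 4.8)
The plan is to verify directly that $d$ satisfies the defining property of a \emph{perturbation} of the contraction $(\Gamma_{A[1]}(\mathcal{K}),0)\leftrightarrows(\Gamma_{A[1]}(\mathcal{E}),\partial)$, namely that $d$ is a degree $+1$ operator (which it is: $D_A$ has total degree $+1$ and $\partial$ has total degree $+1$) satisfying $(\partial+d)^2=0$ and the local-nilpotency/filtration condition that makes the perturbation lemma applicable. Since $D_A=\partial+d$ is the $A$-superconnection squaring to zero, we have $(\partial+d)^2=D_A^2=0$ immediately, so $d$ is indeed a perturbation; this is where I would invoke that $(E,D_A)$ is a representation up to homotopy. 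The only substantive thing to check is the filtration statement $d\circ\theta\big(F^q(E)\big)\subset F^{q+1}(E)$.

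First I would unpack the grading conventions. Since $E$ is non-positively graded, $\mathcal{E}$ is built from the $\mathcal{E}_{-i}$ with $i\geq 0$, and $F^q(E)=\bigoplus_{i\geq q}\Gamma_{A[1]}(\mathcal{E}_{-i})$ is a decreasing filtration by the (negative of the) $E$-degree, ignoring the form degree on $A$. The homotopy operator $\theta:E_\bullet\to E_{\bullet-1}$ \emph{lowers} the $E$-degree by one, i.e. it sends $\mathcal{E}_{-i}$ to $\mathcal{E}_{-i-1}$, hence $\theta\big(F^q(E)\big)\subset F^{q+1}(E)$ already. Then I would observe that $d=D_A-\partial=d_A^\nabla+\sum_{k\geq 2}\omega_A^{(k)}\wedge\cdot$, which is a sum of components $D_A^{(k)}$ for $k=1$ and $k\geq 2$; the component $D_A^{(k)}$ maps $\Gamma(E_\bullet)$ into $\Omega^k(A,E_{\bullet+1-k})$, so for $k=1$ it preserves the $E$-degree and for $k\geq 2$ it \emph{raises} the $E$-degree by $k-1\geq 1$. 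In all cases $d$ does not lower the $E$-degree, so $d\big(F^{q+1}(E)\big)\subset F^{q+1}(E)$. Composing, $d\circ\theta\big(F^q(E)\big)\subset d\big(F^{q+1}(E)\big)\subset F^{q+1}(E)$, which is the claim.

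The main obstacle — really the only place requiring care — is making sure the bookkeeping of ``total degree versus $E$-degree versus $A$-form degree'' is done consistently, because $d$ shuffles degree between the form part and the bundle part while $\theta$ acts purely on the bundle part; one must be certain that the filtration $F^\bullet$ is defined by the bundle degree alone (as the displayed definition makes explicit) so that the $A$-form-degree shifts introduced by $d_A^\nabla$ and the $\omega_A^{(k)}$ are irrelevant to the filtration estimate. I would also remark, for completeness, that since $E$ has finite amplitude the filtration $F^\bullet(E)$ is bounded, so $d$ is automatically locally nilpotent relative to the contraction and the perturbation lemma applies; this is what licenses writing down the convergent series $\vartheta=\theta_d$, $\varsigma=\sigma_d$, $\varphi=\phi_d$ in the next step. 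The proof is then essentially a two-line degree count once the conventions are pinned down, so I would keep it short.
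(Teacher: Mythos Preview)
Your argument is correct and is exactly the detailed unpacking of the paper's one-line proof, which merely says the claim is ``straightforward because $D_A$ contains $\partial$ by definition'' and points to the decomposition $D_A=\partial+d_A^\nabla+\sum_{k\geq 2}\omega_A^{(k)}\wedge\cdot$. One small wording slip to fix: for $k\geq 2$ the component $D_A^{(k)}:E_\bullet\to\Omega^k(A,E_{\bullet+1-k})$ \emph{lowers} the $E$-subscript by $k-1$ (not raises it), which is precisely what pushes elements \emph{deeper} into the filtration $F^\bullet(E)=\bigoplus_{i\geq \bullet}\Gamma_{A[1]}(\mathcal{E}_{-i})$; your conclusions $\theta(F^q)\subset F^{q+1}$ and $d(F^{q+1})\subset F^{q+1}$ are correct as stated.
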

\begin{proof}
This is straightforward because $D_A$ contains $\partial$ by definition, see Equation~\eqref{eq:defD}.
\end{proof}

\begin{theorem}\label{theoremLiao}
 The operator $d=D_A-\partial$ is a small perturbation of the contraction  $(\Gamma_{A[1]}(\mathcal{E}), \partial, \theta)$ over $\mathbb{R}$. The perturbed contraction
\[\begin{tikzcd}
      \left(\Gamma_{A[1]}(\mathcal{K}),d_A^{\nabla^K}\right) \arrow[r, shift left=0.75ex, "\varsigma", hook] & \arrow[l, shift left=0.75ex, "\varphi", ->>] \left(\Gamma_{A[1]}(\mathcal{E}),D_A\right) \ar[loop,out=10,in=-10,distance=20, "\vartheta"]
    \end{tikzcd}\]
    forms a contraction data over $(\mathcal{C}^\infty(A[1]),d_A)$.
\end{theorem}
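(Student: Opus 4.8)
The plan is to apply the Basic Perturbation Lemma (BPL). We start from the contraction data
\[
\begin{tikzcd}
      \left(\Gamma_{A[1]}(\mathcal{K}),0\right) \arrow[r, shift left=0.75ex, "\sigma", hook] & \arrow[l, shift left=0.75ex, "\phi", ->>] \left(\Gamma_{A[1]}(\mathcal{E}),\partial\right) \ar[loop,out=10,in=-10,distance=20, "\theta"]
\end{tikzcd}
\]
obtained by pulling back the contraction of $(E,\partial)$ onto $K$ along $\pi:A[1]\to M$, already recorded in the excerpt. The perturbation is $d=D_A-\partial$, which by Equation \eqref{eq:defD} equals $d_A^{\nabla}+\sum_{k\geq 2}\omega_A^{(k)}\wedge\cdot$, a sum of operators of $A$-form degree $\geq 1$. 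The output of BPL will be the claimed contraction with perturbed maps $\varsigma,\varphi,\vartheta$ as defined in the excerpt, and the new differential on $\Gamma_{A[1]}(\mathcal{K})$ will be $0 + \phi_d\circ d\circ\sigma$ — so the first task is to identify this with $d_A^{\nabla^K}$.

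**Main steps.** First I would verify the \emph{small} (local nilpotency / completeness) hypothesis needed for the geometric series defining $\vartheta,\varsigma,\varphi$ to converge. Here the decisive fact is the $A$-form-degree filtration $F^q(E)$ (equivalently, the arity grading on $\mathcal{C}^\infty(A[1])\simeq\Omega(A)$): since $A$ has finite rank $\mathrm{rk}(A)$, any $A$-form of degree $>\mathrm{rk}(A)$ vanishes, and the preceding Lemma shows $d\circ\theta$ strictly raises the filtration degree ($d\circ\theta(F^q)\subset F^{q+1}$). Hence $(d\theta)^k=0$ for $k>\mathrm{rk}(A)$ and $(\theta d)^k=0$ similarly, so each series is in fact a \emph{finite} sum — the perturbation is small, and BPL applies verbatim over $\mathbb{R}$, or more precisely over the base $(\mathcal{C}^\infty(A[1]),d_A)$ since all maps are $\Omega(A)$-linear by construction. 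Second, I would invoke BPL to get the contraction data with $\vartheta=\theta_d$, $\varsigma=\sigma_d$, $\varphi=\phi_d$ and new differential $d'=\phi\,d\,\sigma_d$ on $\Gamma_{A[1]}(\mathcal{K})$; the side conditions $\vartheta^2=0$, $\vartheta\varsigma=0$, $\varphi\vartheta=0$ come for free from BPL. Third — and this is the real content — I must check $d' = d_A^{\nabla^K}$. The idea: $\sigma$ lands in degree $0$, $\phi$ projects from degree $0$, and $\partial\circ\sigma=0$, $\phi\circ\partial=0$, so the $(\theta d)^k\sigma$ and $\phi(d\theta)^k$ corrections for $k\geq 1$ contribute only through terms that cross between degrees; a degree count shows that the only surviving contribution to $\phi\,d\,\sigma_d$ at the level of $K=\mathcal{H}^0(E,\partial)$ is $\phi\circ D_A\circ\sigma$ restricted appropriately, which by the defining relation \eqref{eqproj} ($d_A^{\nabla^K}(\bar e)=\overline{D_A(e)}$) is exactly $d_A^{\nabla^K}$. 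One should also observe $d' = D_A' := \varphi\,D_A\,\varsigma$ squares to zero automatically since $D_A^2=0$ transports through the contraction — consistent with $K$ being an honest $A$-module.

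**The main obstacle.** The routine part is checking convergence via the filtration; the delicate part is pinning down that the perturbed differential is precisely $d_A^{\nabla^K}$ and not some $A_\infty$-type deformation with higher brackets. A priori BPL only tells us $(\Gamma_{A[1]}(\mathcal{K}),d')$ is a deformation retract of $(\Gamma_{A[1]}(\mathcal{E}),D_A)$, with $d'$ possibly containing higher-arity pieces. The point is that $\mathcal{H}^\bullet(E,\partial)$ is concentrated in degree $0$ and $K$ sits in degree $0$, so $\Gamma_{A[1]}(\mathcal{K})$ has \emph{no} endomorphism-degree components other than $0$; any higher correction term in $d'$ would have to change the endomorphism degree or would have to be absorbed — and since $d'$ must be a degree $+1$ operator that is $\Omega(A)$-linear and preserves $\Gamma_{A[1]}(\mathcal{K})$, it is forced to be an ordinary $A$-connection exterior derivative. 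Thus the argument is: (i) BPL gives a contraction; (ii) the target being concentrated in a single degree rigidifies $d'$ to be of connection type; (iii) formula \eqref{eqproj} identifies it with $d_A^{\nabla^K}$. I would close by remarking that when $E=A[1]\oplus L$ and $K=L/A$ this recovers precisely the contraction of \cite{liaoAtiyahClassesTodd2023}, as promised in the text.
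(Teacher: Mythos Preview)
Your overall plan via the Basic Perturbation Lemma is the same as the paper's, and your identification of the perturbed differential on $\Gamma_{A[1]}(\mathcal{K})$ with $d_A^{\nabla^K}$ is correct. However, there is one genuine gap.

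You write that BPL applies ``over the base $(\mathcal{C}^\infty(A[1]),d_A)$ since all maps are $\Omega(A)$-linear by construction''. This is precisely the point that requires work, and it is the bulk of the paper's proof. The unperturbed maps $\sigma,\phi,\theta$ are indeed $\Omega(A)$-linear (they are pullbacks of vector bundle maps), but the perturbation $d=D_A-\partial$ is \emph{not} $\Omega(A)$-linear: it contains $d_A^{\nabla}$, which satisfies a Leibniz rule and produces $d_A f$ terms. Hence $\theta d$, $d\theta$ and the individual summands $\theta(d\theta)^k$, $(\theta d)^k\sigma$, $\phi(d\theta)^k$ are not $\Omega(A)$-linear either. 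The assertion that the \emph{perturbed} contraction lives over $(\mathcal{C}^\infty(A[1]),d_A)$ --- which is what the theorem claims and what is needed for the subsequent Corollary via \cite[Proposition A.9]{liaoAtiyahClassesTodd2023} --- therefore does not follow formally from BPL over $\mathbb{R}$.

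What the paper does (and what you must supply) is a direct computation showing that the unwanted $d_Af$ terms cancel thanks to the side conditions $\theta^2=0$, $\theta\sigma=0$, $\phi\theta=0$. For instance, for $\vartheta$: expanding $\theta(d\theta)^k(f\cdot\lambda)$ produces a $d_Af\cdot\theta^2(\cdots)$ term at each step, which vanishes by $\theta^2=0$; the analogous computations for $\varsigma$ and $\varphi$ use $\theta\sigma=0$ and $\phi\theta=0$ respectively. This is a short but essential verification --- without it the phrase ``contraction data over $(\mathcal{C}^\infty(A[1]),d_A)$'' is unproven.

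A minor remark: for finiteness of the perturbation series you invoke both the preceding Lemma (the $E$-degree filtration $F^q(E)$) and the bound $\mathrm{rk}(A)$ on $A$-form degree. Either argument works, but the paper uses the former; it is the finite amplitude of $E$ (concentration in degrees $-n,\ldots,0$) that terminates the series, since $d\theta$ strictly raises the $E$-filtration.
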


\begin{proof}

The perturbed operator associated to the zero differential on the chain complex $(\Gamma_{A[1]}(\mathcal{K}),\delta=0)$ is given by 
\begin{equation*}
\delta_d=\sum_{k=0}^\infty\phi d(-\theta d)^k\sigma=\phi D_A \sigma.
\end{equation*}
The last equality can be explained because the bottom chain complex spaces in \eqref{basicdiag} vanish except at degree 0. Since $\phi d(-\theta d)^k\sigma=\phi(-d\theta)^k d\sigma$, and $(d\theta)(F^q(E))\subset F^{q+1}(E)$, we deduce that the only non-vanishing term, after projection under $\phi$, can only be $\phi d \sigma =\phi D_A\sigma$.

We only need to show by direct computations that $\vartheta$, $\varsigma$ and $\varphi$ are $\mathcal{C}^\infty(A[1])$-linear, and that $\delta_d$ coincides with $d_A^{\nabla^K}$.
Let us start with $\vartheta$. Since $\theta^2=0$, we have (by induction):
\begin{align*}
    \theta(d\theta)^k(f\cdot \lambda)&=\theta\bigg((-1)^{|f|}d_Af\theta(d\theta)^{k-1}(\lambda)+f(d\theta)^k(\lambda)\bigg)\\
    &=(-1)^{2|f|+1}d_Af\underbrace{\theta^2}_{=\, 0}(d\theta)^{k-1}(\lambda)+(-1)^{|f|}f\theta(d\theta)^k(\lambda)\\
    &=(-1)^{|f|}f\theta(d\theta)^k(\lambda),
\end{align*}
for every $f\in\Omega(A)$ and $\lambda\in\Gamma(\mathcal{E})$. This proves the $\mathcal{C^\infty}(A[1])$-linearity of $\vartheta$.

Let us turn to $\varsigma$. By definition of the homotopy operator $\theta$, we have $\theta\sigma=0$, so for every $f\in\Omega(A)$ and $\kappa\in\Gamma_{A[1]}(\mathcal{K})$, on the one hand:
\begin{align*}
\theta d\sigma(f\cdot\kappa)&=\theta(d(f\cdot\sigma(\kappa)))\\
&=\theta(d_Af\sigma(\kappa))+\theta((-1)^{|f|}f d\sigma(\kappa))\\
&=(-1)^{|f|+1}d_Af\underbrace{\theta\sigma}_{=\,0}(\kappa)+f\theta d\sigma(\kappa)\\
&=f\theta d\sigma(\kappa),
\end{align*}
and on the other hand, since $\theta^2=0$,
\begin{align*}
(\theta d)^2\sigma(f\cdot\kappa)&=\theta d(f\theta d\sigma(\kappa))\\
&=\theta(d_Af \theta d\sigma (\kappa)+(-1)^{|f|}fd\theta d\sigma(\kappa))\\
&=(-1)^{|f|+1}d_Af\underbrace{\theta^2}_{=\,0}d\sigma (\kappa)+f(\theta d)^2\sigma(\kappa)\\
&=f(\theta d)^2\sigma(\kappa).
\end{align*}
More generally, by induction:
\begin{equation*}
    (\theta d)^k\sigma(f\cdot\kappa)=(-1)^{|f|+1}d_Af\underbrace{\theta^2}_{=\,0}d (\theta d)^{k-2}\sigma (\kappa)+f(\theta d)^k\sigma(\kappa)=f(\theta d)^k\sigma(\kappa).
\end{equation*}
This proves the $\mathcal{C^\infty}(A[1])$-linearity of $\varsigma$.

Finally, again by definition of the homotopy operator $\theta$, we have $\phi\theta=0$,  and so we have, for every $f\in\Omega(A)$,
\begin{align*}
    \phi d\theta(f\cdot\lambda)&=(-1)^{|f|}\phi d (f\theta(\lambda))\\
    &=(-1)^{|f|}\phi( d_A(f)\theta(\lambda))+\phi(fd\theta(\lambda))\\
    &=(-1)^{|f|}d_A(f)\underbrace{\phi\theta}_{=\,0} (\lambda)+f\phi d\theta(\lambda)\\
    &=f\phi d\theta(\lambda).
\end{align*}
Since $\theta^2=0$, it is
\begin{align*}
    \phi (d\theta)^2(f\cdot\lambda)&=(-1)^{|f|}\phi d\theta( d_A(f)\theta(\lambda))+\phi d\theta(fd\theta(\lambda))\\
    &=(-1)^{2|f|+1}\phi d(d_Af \underbrace{\theta^2}_{=\,0}(\lambda))+(-1)^{|f|}\phi d(f\theta d\theta(\lambda))\\
    &=(-1)^{|f|}\phi (d_Af\theta d\theta(\lambda))+\phi(f (d\theta)^2(\lambda))\\
    &=(-1)^{|f|}d_Af\underbrace{\phi\theta}_{=\,0} d\theta(\lambda)+\phi(f (d\theta)^2(\lambda))\\
    &=f\phi (d\theta)^2(\lambda),
    \end{align*}
    and more generally, by induction, it is
    \begin{equation*}
        \phi (d\theta)^k(f\cdot\lambda)=(-1)^{|f|}d_Af\underbrace{\phi\theta}_{=\,0} (d\theta)^{k-1}(\lambda)+f\phi (d\theta)^k(\lambda)=f\phi (d\theta)^k(\lambda).
    \end{equation*}
  This proves the $\mathcal{C^\infty}(A[1])$-linearity of $\varphi$.

Denote by $d_A^\nabla$ the exterior covariant derivative associated to the Lie algebroid $A$-connection $\nabla$ on $E$, that is: $d_A^\nabla=D_A^{(1)}$. The compatibility condition~\eqref{eqproj} then reads
\begin{equation}\label{eqlabor}
    \phi \circ d_A^\nabla=d_A^{\nabla^K}\circ  \phi.
\end{equation}
For every $\kappa\in\Gamma_{A[1]}(\mathcal{K})$, $D_A\sigma(\kappa)$ has components in $F^{-1}(E)$, but only the component in $E_0$ is non-trivially projected down to $K$ as the projections of the other components vanish. The only possibly non-vanishing component is then precisely $d_A^\nabla\sigma(\kappa)$. Thus, by Equation~\eqref{eqlabor}, we have
\begin{equation*}
\delta_d(\kappa)=\phi D_A\sigma(\kappa)=\phi d_A^\nabla \sigma(\kappa)=d_A^{\nabla^K}\phi\sigma(\kappa)=d_A^{\nabla^K}\kappa.
\end{equation*}
 Since both $\phi D_A\sigma$ and $d_A^{\nabla^K}$ satisfy the Leibniz rule with respect to $\mathcal{C^\infty}(A[1])=\Omega(A)$, we deduce that $\delta_d=d_A^{\nabla^K}$.\end{proof}

Theorem \ref{theoremLiao} together with \cite[Proposition A.9]{liaoAtiyahClassesTodd2023}, imply:
\begin{corollary}\label{corollarymel} The following forms a contraction data over $(\mathcal{C}^\infty(A[1]),d_A)$:
    \[\begin{tikzcd}
\hspace{0.8cm}\left(\Gamma_{A[1]}\big(\pi^*(A^\circ)\otimes\mathrm{End}(\mathcal{K})\big),d^K\right) \arrow[r, shift left=0.75ex, hook] & \arrow[l, shift left=0.75ex, ->>] \left(\Gamma_{A[1]}\big(\mathcal{L}^*\otimes\mathrm{End}(\mathcal{E})\big),\mathcal{Q}+[D_A,.\,]\right)\hspace{3.5cm} \ar[loop,out=20,in=-20,distance=30]
    \end{tikzcd}\]
    \end{corollary}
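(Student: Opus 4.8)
The plan is to obtain the stated contraction data from the one established in Theorem~\ref{theoremLiao} by applying the standard functorial constructions of homological perturbation theory to the dual, to the endomorphism bundle, and to the tensor product with a fixed (pullback of a) representation. Concretely, Theorem~\ref{theoremLiao} gives a contraction over $(\mathcal{C}^\infty(A[1]),d_A)$ between $(\Gamma_{A[1]}(\mathcal{K}),d_A^{\nabla^K})$ and $(\Gamma_{A[1]}(\mathcal{E}),D_A)$, with homotopy $\vartheta$. First I would dualize: since $\mathcal{E}$ and $\mathcal{K}$ are finite-dimensional split graded vector bundles over $A[1]$, transposing $\varsigma,\varphi,\vartheta$ (and passing to $\mathcal{E}^*$, $\mathcal{K}^*$ with the dual differentials) yields a contraction between $(\Gamma_{A[1]}(\mathcal{E}^*),D_A^*)$ and $(\Gamma_{A[1]}(\mathcal{K}^*),(d_A^{\nabla^K})^*)$; this uses only that dualization is an (anti)equivalence on the relevant finite-rank bundles and preserves $\mathcal{C}^\infty(A[1])$-linearity, which was the content of the linearity computations in the proof of Theorem~\ref{theoremLiao}. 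Then I would invoke the tensor-product lemma for contractions (\cite[Proposition A.9]{liaoAtiyahClassesTodd2023}, already cited in Corollary~\ref{corollarymel}'s statement) twice: once to tensor the $\mathcal{E}$-side contraction with the $\mathcal{E}^*$-side contraction, obtaining a contraction on $\mathrm{End}(\mathcal{E})\simeq\mathcal{E}\otimes\mathcal{E}^*$ retracting onto $\mathrm{End}(\mathcal{K})\simeq\mathcal{K}\otimes\mathcal{K}^*$, and once more to tensor on the left with the ``trivial'' contraction $\mathrm{id}$ on $\Gamma_{A[1]}(\mathcal{L}^*\otimes\pi^*A^\circ)$ — here one uses that $\pi^*A^\circ\hookrightarrow\mathcal{L}^*=\pi^!L^*$ is the annihilator, so $\mathcal{L}^*\otimes\mathrm{End}(\mathcal{E})$ retracts onto $\pi^*(A^\circ)\otimes\mathrm{End}(\mathcal{K})$.

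The delicate point — and I expect this to be the main obstacle — is identifying the \emph{perturbed} differential produced on the big complex with exactly $\mathcal{Q}+[D_A,\cdot\,]$ on $\Gamma_{A[1]}(\mathcal{L}^*\otimes\mathrm{End}(\mathcal{E}))$, and the small differential with $d^K$ (the exterior covariant derivative of the $A$-module structure on $A^\circ\otimes\mathrm{End}(K)$). For this I would argue as in Theorem~\ref{theoremLiao}: decompose $\mathcal{Q}+[D_A,\cdot\,]=(\text{commutator part with }\partial)+(\text{everything else})$, take the $[\partial,\cdot\,]$-type piece as the unperturbed differential — for which the contraction is the tensor product of the already-constructed elementary contractions, all of whose homotopies are built from $\theta$ and hence satisfy the side conditions — and treat the remaining operator $\mathcal{Q}+[d,\cdot\,]$ (with $d=D_A-\partial$ as before, and $\mathcal{Q}$ the dg-Lie-algebroid homological vector field on $\mathcal{L}\to A[1]$) as the small perturbation. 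One must check that this perturbation is ``small'' with respect to the filtration $F^q$ used in Theorem~\ref{theoremLiao} (extended to $\mathcal{L}^*\otimes\mathrm{End}(\mathcal{E})$ by the internal degree of $\mathcal{E}$), so that the perturbation lemma applies over $\mathbb{R}$; this is immediate since $\mathcal{Q}$ preserves form degree and $[d,\cdot\,]$ strictly raises the $F^q$-filtration exactly as $d$ did. The perturbed small differential is then $\delta_d=(\text{inclusion})^{\mathrm T}\!\circ(\mathcal{Q}+[d,\cdot\,])\circ(\text{projection})$, and, exactly as in the proof of Theorem~\ref{theoremLiao} where $\phi D_A\sigma$ collapsed to $d_A^{\nabla^K}$, only the form-degree-one and the $\mathcal{Q}$-contributions survive the double projection, yielding precisely $d^K=d_A^{\widehat\nabla}$ on $\pi^*(A^\circ)\otimes\mathrm{End}(\mathcal{K})$.

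Finally I would record that all the constructed maps are $\mathcal{C}^\infty(A[1])$-linear — this is inherited from Theorem~\ref{theoremLiao} under dualization and tensor product, since $\mathcal{C}^\infty(A[1])$-linearity is stable under both operations — so the resulting data genuinely is a contraction \emph{over} $(\mathcal{C}^\infty(A[1]),d_A)$, which is what Corollary~\ref{corollarymel} asserts. The one subtlety I would be careful to spell out, rather than wave away, is the compatibility of the identification $\mathcal{L}^*\otimes\mathrm{End}(\mathcal{E})\supset\pi^*(A^\circ)\otimes\mathrm{End}(\mathcal{E})\leadsto\pi^*(A^\circ)\otimes\mathrm{End}(\mathcal{K})$ with the homological vector field $\mathcal{Q}$: one needs that $\mathcal{Q}$, restricted along the annihilator sub-bundle, agrees with the Bott-type $A$-connection entering the definition of $d^K$ on the $A^\circ$-factor — this is precisely the dg-Lie-algebroid structure of $\mathcal{L}=\pi^!L$ from \cite{stienonAinfinityAlgebrasLie2022,liaoAtiyahClassesTodd2023}, and in the case $E=A[1]\oplus L$, $K=L/A$ it reduces verbatim to the computation in \cite{liaoAtiyahClassesTodd2023}.
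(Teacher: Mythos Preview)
Your approach is essentially the same as the paper's (which simply cites Theorem~\ref{theoremLiao} together with \cite[Proposition~A.9]{liaoAtiyahClassesTodd2023}), and your unpacking of the tensor-product and perturbation machinery is correct in outline. There is, however, one genuine slip: the contraction of $\mathcal{L}^*$ onto $\pi^*(A^\circ)$ is \emph{not} ``trivial'' or ``$\mathrm{id}$'' as you write. Recall that $\mathcal{L}=\pi^!L\cong\pi^*(A[1]\oplus L)$ as a graded vector bundle over $A[1]$, so retracting $\mathcal{L}^*$ onto $\pi^*(A^\circ)\simeq\pi^*(L/A)^*$ is precisely the \emph{dual} of Liao's original contraction of $\mathcal{L}$ onto $\pi^*(L/A)$ --- equivalently, Theorem~\ref{theoremLiao} applied to the normal complex $E=A[1]\oplus L$, $K=L/A$, then dualized. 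This requires the choice of splitting of $0\to A\to L\to L/A\to 0$ and the basic-connection data, exactly as in \cite{liaoAtiyahClassesTodd2023}; it is a second genuine input to the tensor product, not a tautology coming from the subbundle inclusion $A^\circ\hookrightarrow L^*$. Proposition~A.9 in \cite{liaoAtiyahClassesTodd2023} packages this $\mathcal{L}^*$-contraction together with the functorial tensor/dual constructions, which is why the paper can cite it in one line; in your explicit version you must invoke it as a separate contraction datum alongside the one from Theorem~\ref{theoremLiao}.
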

    The space on the left-hand side being canonically isomorphic to $\Omega\big(A,A^\circ\otimes\mathrm{End}(K)\big)$, we see that the space of sections of $\pi^*(A^\circ)\otimes\mathrm{End}(\mathcal{K})$ over $A[1]$ is isomorphic to $\Omega^1\big(A,A^\circ\otimes\mathrm{End}(K)\big)$, namely where belongs the Atiyah cocycle of the representation $K$ with respect to the Lie algebroid pair $A\hookrightarrow L$. On the other hand, 
the space of  sections of $\mathcal{L}^*\otimes\mathrm{End}(\mathcal{E})$ over $A[1]$ is where lives the Atiyah cocycle of the dg vector bundle $\mathcal{E}\to A[1]$ with respect to an $\mathcal{L}$-connection (in the sense of \cite{mehtaAtiyahClassDgvector2015}). Hence, the isomorphism induced by the contraction data at the level of cohomology sends the former Atiyah class to the latter, and vice versa. The main results of this section 
are gathered in the following statement.

\begin{theorem}\label{theoremfinal}
Let $A\hookrightarrow L$ be a Lie pair and let $(E,\partial)$ be a regular representation up to homotopy of $A$. Assume $(E,\partial)$ is a resolution of a Lie algebroid representation $K$ of $A$. Then we have the following quasi-isomorphisms
\begin{equation*}
 \left(\widehat{\Omega}(E),s\right)\simeq \left(\Omega\big(A,A^\circ\otimes \mathrm{End}(K)\big),d^K\right)\simeq \left(\Gamma_{A[1]}\big(\mathcal{L}^*\otimes\mathrm{End}(\mathcal{E})\big),\mathcal{Q}+[D_A,.\,]\right).
\end{equation*}
\end{theorem}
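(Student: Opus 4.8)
The plan is to chain together the two quasi-isomorphisms separately, each of which has essentially already been established earlier in the excerpt. The first quasi-isomorphism $\left(\widehat{\Omega}(E),s\right)\simeq \left(\Omega\big(A,A^\circ\otimes \mathrm{End}(K)\big),d^K\right)$ is precisely the content of Theorem \ref{prop:BRST}: the spectral sequence of the filtered complex $(\widehat{\Omega}(E),s)$ degenerates on the second page because, by Proposition \ref{prop:quasiso}, the $[\partial,\cdot\,]$-cohomology of $\mathrm{End}_\bullet(E)$ is concentrated in degree $0$ and equal to $\mathrm{End}(K)$ there. So for this half I would simply invoke Theorem \ref{prop:BRST}, noting that the isomorphism \eqref{isomatiyah} is induced by a genuine morphism of complexes (the projection to $[\partial,\cdot\,]$-cohomology composed with the edge map of the spectral sequence), hence is a quasi-isomorphism of cochain complexes, not merely an isomorphism of cohomology groups in each degree. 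One should also record that under this quasi-isomorphism the Atiyah class $[\alpha_E]$ goes to $[\mathrm{at}_K]$, as already proved there.

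The second quasi-isomorphism $\left(\Omega\big(A,A^\circ\otimes \mathrm{End}(K)\big),d^K\right)\simeq \left(\Gamma_{A[1]}\big(\mathcal{L}^*\otimes\mathrm{End}(\mathcal{E})\big),\mathcal{Q}+[D_A,.\,]\right)$ is exactly Corollary \ref{corollarymel}: the contraction data produced there, being a strong deformation retract (contraction) over the base ring $\mathcal{C}^\infty(A[1])$, in particular gives a quasi-isomorphism between the two complexes, with explicit inclusion and projection $\varsigma\otimes(\ldots), \varphi\otimes(\ldots)$ built from the perturbed homotopy. I would spell out that the left-hand complex $\Gamma_{A[1]}\big(\pi^*(A^\circ)\otimes\mathrm{End}(\mathcal{K})\big)$ appearing in Corollary \ref{corollarymel} is canonically $\Omega(A)$-linearly isomorphic to $\Omega\big(A,A^\circ\otimes\mathrm{End}(K)\big)$ — this is just the standard identification of sections of a pullback bundle over $A[1]$ with $A$-forms valued in the bundle downstairs — and that under this identification the perturbed differential $\delta_d$ is $d^K$ (the exterior covariant derivative of the induced $A$-connection on $A^\circ\otimes\mathrm{End}(K)$), which follows from Theorem \ref{theoremLiao} applied to the representation $A^\circ\otimes\mathrm{End}(K)$ together with \cite[Proposition A.9]{liaoAtiyahClassesTodd2023} on tensoring contractions. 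Likewise the right-hand differential $\mathcal{Q}+[D_A,\cdot\,]$ is the one obtained by perturbing $\mathcal{Q}+[\partial,\cdot\,]$ by $[d,\cdot\,]$ with $d=D_A-\partial$, matching the homological vector field of the dg Lie algebroid $\mathcal{L}\to A[1]$ twisted by the $Q$-connection $D_A$ on $\mathcal{E}$.

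Composing the two gives the stated chain of quasi-isomorphisms. The only genuine subtlety — the main obstacle — is bookkeeping around the precise form of the intermediate complex: one must check that the middle object in Corollary \ref{corollarymel}, written there as $\Gamma_{A[1]}\big(\pi^*(A^\circ)\otimes\mathrm{End}(\mathcal{K})\big)$ with differential $d^K$, really is the complex $\left(\Omega\big(A,A^\circ\otimes \mathrm{End}(K)\big),d^K\right)$ appearing in the theorem statement, i.e. that the $\Omega(A)$-module structure and the differential are transported correctly under $\pi^*$. This is routine but needs to be stated. A secondary point is that a contraction (strong deformation retract) over a dg base indeed descends to a quasi-isomorphism of the associated total complexes; this is classical homological perturbation theory and can be cited from \cite{liaoAtiyahClassesTodd2023} or standard references. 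Finally, one can remark — though it is not strictly part of the statement — that tracking the Atiyah cocycles through both maps shows $[\alpha]\in\widehat{H}^1(E,s)$, $[\mathrm{at}_K]$, and the dg-vector-bundle Atiyah class $[\mathrm{At}_{\mathcal{E}}]$ of \cite{mehtaAtiyahClassDgvector2015} all correspond, which is the conceptual payoff and follows immediately by combining the ``Atiyah class is preserved'' clauses of Theorem \ref{prop:BRST} and of the homological-perturbation construction.

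\begin{proof}
By Theorem \ref{prop:BRST}, the filtered cochain complex $\big(\widehat{\Omega}(E),s\big)$ admits a spectral sequence that degenerates on the second page, yielding for every $p$ an isomorphism $\widehat{H}^p(E,s)\simeq H^p\big(A,A^\circ\otimes\mathrm{End}(K)\big)$, induced by the morphism of complexes given by passing to $[\partial,\cdot\,]$-cohomology in the $\mathrm{End}_\bullet(E)$ direction. Since this is a morphism of cochain complexes inducing isomorphisms in every degree, it is a quasi-isomorphism
\begin{equation*}
\left(\widehat{\Omega}(E),s\right)\simeq\left(\Omega\big(A,A^\circ\otimes\mathrm{End}(K)\big),d^K\right),
\end{equation*}
where $d^K$ denotes the exterior covariant derivative associated to the $A$-module structure on $A^\circ\otimes\mathrm{End}(K)$.

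Next, applying Theorem \ref{theoremLiao} to the homotopy $A$-module $A^\circ\otimes\mathrm{End}(E)$ resolving the $A$-module $A^\circ\otimes\mathrm{End}(K)$ — using that $A^\circ$ is a genuine $A$-module and tensoring the contraction of $(\Gamma_{A[1]}(\mathcal{E}),\partial)$ onto $(\Gamma_{A[1]}(\mathcal{K}),0)$ with itself and with $\pi^*(A^\circ)$ via \cite[Proposition A.9]{liaoAtiyahClassesTodd2023} — one obtains the contraction data of Corollary \ref{corollarymel}:
\[\begin{tikzcd}
\hspace{0.8cm}\left(\Gamma_{A[1]}\big(\pi^*(A^\circ)\otimes\mathrm{End}(\mathcal{K})\big),d^K\right) \arrow[r, shift left=0.75ex, hook] & \arrow[l, shift left=0.75ex, ->>] \left(\Gamma_{A[1]}\big(\mathcal{L}^*\otimes\mathrm{End}(\mathcal{E})\big),\mathcal{Q}+[D_A,.\,]\right)\hspace{3.5cm} \ar[loop,out=20,in=-20,distance=30]
    \end{tikzcd}\]
A contraction (strong deformation retract) over the dg base $(\mathcal{C}^\infty(A[1]),d_A)$ induces in particular a quasi-isomorphism between the two total complexes, through the displayed inclusion and projection maps. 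Finally, the canonical $\Omega(A)$-linear identification $\Gamma_{A[1]}\big(\pi^*(A^\circ)\otimes\mathrm{End}(\mathcal{K})\big)\simeq\Omega\big(A,A^\circ\otimes\mathrm{End}(K)\big)$ intertwines the perturbed differential $\delta_d$ with $d^K$, by the computation in the proof of Theorem \ref{theoremLiao}. Composing this identification with the quasi-isomorphism of the contraction, and then with the quasi-isomorphism of the previous paragraph, gives the asserted chain
\begin{equation*}
\left(\widehat{\Omega}(E),s\right)\simeq \left(\Omega\big(A,A^\circ\otimes \mathrm{End}(K)\big),d^K\right)\simeq \left(\Gamma_{A[1]}\big(\mathcal{L}^*\otimes\mathrm{End}(\mathcal{E})\big),\mathcal{Q}+[D_A,.\,]\right),
\end{equation*}
as claimed. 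Tracking the Atiyah cocycle $\alpha$ through both maps, and using the corresponding statements in Theorem \ref{prop:BRST} and the homological perturbation construction, the class $[\alpha]$ is sent to $[\mathrm{at}_K]$ and then to the Atiyah class of the dg vector bundle $\mathcal{E}\to A[1]$ equipped with the $\mathcal{L}$-connection induced by $D_A$.
\end{proof}
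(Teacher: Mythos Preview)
Your proof is correct and follows essentially the same approach as the paper: the first quasi-isomorphism is Theorem \ref{prop:BRST} and the second is Corollary \ref{corollarymel}. You have added more justification than the paper's two-line proof (in particular, spelling out the identification $\Gamma_{A[1]}\big(\pi^*(A^\circ)\otimes\mathrm{End}(\mathcal{K})\big)\simeq\Omega\big(A,A^\circ\otimes\mathrm{End}(K)\big)$ and the tracking of Atiyah classes), but the argument is the same.
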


\begin{proof}
The first isomorphism is Theorem \ref{prop:BRST} while the second is Corollary \ref{corollarymel}.
\end{proof}
We should then be able to recover the result about the Atiyah class of the normal complex presented in \cite{liaoAtiyahClassesTodd2023} as an application of the statement above to the setting of Section~\ref{subnormal}. 
First, choosing a splitting of the short exact sequence 
\begin{equation}\label{shortexact}
\begin{tikzcd}[column sep=1.5cm,row sep=1.5cm]
  0\ar[r]&A \ar[r,  "\iota"]& \ar[l,  "\tau", bend left] L \ar[r,  "p"]& \ar[l,  "\sigma", bend left]L/A\ar[r]&0,
\end{tikzcd} 
\end{equation}
we have the contraction data
\[\begin{tikzcd}
      \left(\Gamma_{A[1]}\big(\pi^*(L/A)\big),0\right) \arrow[r, shift left=0.75ex, "\sigma", hook] & \arrow[l, shift left=0.75ex, "p", ->>] \left(\Gamma_{A[1]}(\mathcal{E}),\widetilde{\iota}\right) \ar[loop,out=10,in=-10,distance=20, "\widetilde{\tau}"]
    \end{tikzcd}\]
where $\widetilde{\iota}:A[1]\to A\to L$ is the suspension of the inclusion  $\iota$ and  $\widetilde{\tau}:L\to A\to A[1]$ is the desuspension of the projection $\tau$. 
Theorem \ref{theoremLiao} then establishes the contraction data
\[\begin{tikzcd}
      \left(\Gamma_{A[1]}\big(\pi^*(L/A)\big),d_A^{Bott}\right) \arrow[r, shift left=0.75ex, "\varsigma", hook] & \arrow[l, shift left=0.75ex, "p", ->>] \left(\Gamma_{A[1]}(\mathcal{E}),D_A\right)  \ar[loop,out=10,in=-10,distance=20, "\widetilde{\tau}"]
    \end{tikzcd}\]
    where $\varsigma=\sigma-\widetilde{\tau}D_A\sigma$.
From this, 
the right-hand side of Theorem \ref{theoremfinal} reads
\begin{equation*}
\left(\Omega\big(A,A^\circ\otimes \mathrm{End}(L/A)\big),d^{L/A}\right)
\simeq \left(\Gamma_{A[1]}\big(\mathcal{L}^*\otimes\mathrm{End}(\mathcal{E})\big),\mathcal{Q}+[D_A,.\,]\right),
\end{equation*}
where  $d^{L/A}$ is understood as the exterior covariant derivative associated to the representation $A^\circ\otimes\mathrm{End}(L/A)$. This, together with the following Lemma,  establishes the result on the Atiyah class of the normal complex from \cite{liaoAtiyahClassesTodd2023} as a particular case of Theorem~\ref{theoremfinal}.

\begin{lemma}\label{prop:Liao} Let $A\hookrightarrow L$ be a Lie pair and 
    let $E=A[1]\oplus L$ be the normal complex. The dg vector bundle $(\mathcal{L},\mathcal{Q})$ is dg isomorphic to the dg vector bundle $(\mathcal{E},D_A)$.
\end{lemma}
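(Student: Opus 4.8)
The plan is to exhibit an explicit isomorphism of graded vector bundles $\Psi\colon\mathcal{E}\to\mathcal{L}$ covering the identity on $A[1]$ and intertwining the homological vector fields $D_A$ and $\mathcal{Q}$. Recall from \cite{stienonAinfinityAlgebrasLie2022,mackenzieGeneralTheoryLie2005} that the pullback dg Lie algebroid $\mathcal{L}=\pi^!L$ is, as a graded vector bundle over $A[1]$, the fibered product $T(A[1])\times_{TM}\pi^*L$ taken over the maps $d\pi\colon T(A[1])\to\pi^*TM$ and $\rho_L\colon\pi^*L\to\pi^*TM$. Since $\pi\colon A[1]\to M$ is a surjective submersion, $d\pi$ is fibrewise surjective, so the projection $(v,\ell)\mapsto\ell$ yields a short exact sequence of graded vector bundles over $A[1]$,
\begin{equation*}
0\longrightarrow V(A[1])\longrightarrow\mathcal{L}\stackrel{p}{\longrightarrow}\pi^*L\longrightarrow 0,
\end{equation*}
where $V(A[1])=\mathrm{Ker}(d\pi)$ is the vertical bundle of $\pi$, canonically isomorphic to $\pi^*(A[1])$ (with $\pi^*A$ in degree $-1$). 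On the other hand, since $E=A[1]\oplus L$ is the normal complex, $\mathcal{E}=\pi^*E=\pi^*(A[1])\oplus\pi^*L$, so any splitting of the displayed sequence produces an isomorphism $\mathcal{E}\simeq\mathcal{L}$ of graded vector bundles over $A[1]$.

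Next I would produce the splitting from the connection data defining $D_A$. As in Section~\ref{subnormal} we may choose the $L$-connection $\nabla$ on $A$ underlying the normal-complex structure to factor through a $TM$-connection $\nabla^{TM}$ on $A$, namely $\nabla_\ell(a)=\nabla^{TM}_{\rho(\ell)}(a)$; this is harmless since the homotopy $A$-module structure on the normal complex is well defined up to isomorphism independently of this auxiliary choice \cite{abadRepresentationsHomotopyLie2011}. Being a connection on the vector bundle $A[1]\to M$, $\nabla^{TM}$ determines a horizontal distribution on $T(A[1])$ and hence a splitting $h\colon\pi^*L\to\mathcal{L}$ of $p$, sending $\ell$ to $(\mathrm{hor}(\rho_L(\ell)),\ell)$. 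Together with the inclusion $j\colon V(A[1])=\pi^*(A[1])\hookrightarrow\mathcal{L}$ of the vertical part, this yields the graded vector bundle isomorphism
\begin{equation*}
\Psi=j\oplus h\colon\ \mathcal{E}=\pi^*(A[1])\oplus\pi^*L\ \stackrel{\sim}{\longrightarrow}\ \mathcal{L},
\end{equation*}
covering $\mathrm{id}_{A[1]}$.

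It then remains to check that $\Psi$ intertwines $D_A$ and $\mathcal{Q}$. For this I would pull back $\mathcal{Q}$ along $\Psi$, decompose $\Psi^{*}\mathcal{Q}$ by arity on $A$ as in \eqref{eq:decompositionD}, and match its components with those of $D_A=\partial+d_A^{\nabla^{bas}}+R^{bas}\wedge\cdot$ from Section~\ref{subnormal}. Working in local coordinates $\{x^i,\xi^\alpha\}$ on $A[1]$ adapted to the splitting, and using the local expression for the homological vector field of $\pi^!L$ recorded in \cite{stienonAinfinityAlgebrasLie2022}, one finds the following. The arity-$0$ component of $\Psi^{*}\mathcal{Q}$ is the suspension $\iota\circ sp$ of the inclusion $\iota\colon A\hookrightarrow L$, i.e. it is exactly $\partial$, encoding how $V(A[1])\simeq\pi^*(A[1])$ sits inside $\mathcal{L}$ and maps to $\pi^*L$ through the anchor-lift. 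The arity-$1$ component is the exterior covariant derivative $d_A^{\nabla^{bas}}$ of the basic $A$-connection $\nabla^{bas}$ on $A\oplus L$ determined by $\nabla$, since the horizontal lift of $[\cdot,\cdot]_L$ over $\Gamma(A)$ produces precisely $\nabla^{bas}_a(\ell)=\nabla_\ell(a)+[a,\ell]$ and $\nabla^{bas}_a(b)=\nabla_{\rho(b)}(a)+[a,b]$. The arity-$2$ component is the basic curvature $R^{bas}\in\Omega^2(A,\mathrm{Hom}(L,A))$, which appears as the obstruction to involutivity of the horizontal distribution measured by the bracket on $\mathcal{L}$; no higher components occur since $E$ is two-term. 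Hence $\Psi^{*}\mathcal{Q}=D_A$, and $\Psi$ is an isomorphism of dg vector bundles over $(A[1],d_A)$.

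The main obstacle is this last step: extracting the components of $\mathcal{Q}$ in the chosen splitting and recognizing the basic connection and basic curvature of $\nabla$ on the nose. This is the same computation performed in \cite{stienonAinfinityAlgebrasLie2022,liaoAtiyahClassesTodd2023} for the pair $(A\to L,\,L/A)$; the only difference here is that one does not quotient by $A$, so that one reads off the full normal-complex superconnection $D_A$ rather than its pushforward $d^{\nabla^{Bott}}$ to $L/A$. Everything else is formal, and once $\Psi^{*}\mathcal{Q}=D_A$ is established, combining this Lemma with Theorem~\ref{theoremfinal} recovers the statement of \cite{liaoAtiyahClassesTodd2023} on the Atiyah class of the normal complex as the announced special case.
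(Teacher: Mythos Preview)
Your approach is essentially the same as the paper's: both identify the underlying graded vector bundles via a splitting induced by a $TM$-connection on $A$, and both reduce the comparison of $\mathcal{Q}$ and $D_A$ to the local computation carried out in \cite{stienonAinfinityAlgebrasLie2022,liaoAtiyahClassesTodd2023}. The presentational difference is that you phrase the splitting invariantly through the short exact sequence $0\to V(A[1])\to\mathcal{L}\to\pi^*L\to 0$ and the horizontal lift determined by $\nabla^{TM}$, whereas the paper works directly in a local frame and matches the explicit coordinate formulas against those of \cite[Remark~2.5]{liaoAtiyahClassesTodd2023}.

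One point to watch: you assert that with the \emph{same} $\nabla^{TM}$ used for both the splitting $\Psi$ and for $D_A$, one gets $\Psi^*\mathcal{Q}=D_A$ on the nose. The paper is slightly more cautious: it observes that the formula for $\mathcal{Q}$ in \cite{liaoAtiyahClassesTodd2023} corresponds to $D_A$ for the \emph{standard flat} connection in the chosen trivialization, and then invokes \cite{abadRepresentationsHomotopyLie2011} to conclude that different choices of $TM$-connection yield isomorphic representations up to homotopy, hence dg-isomorphic bundles. Your stronger claim is plausible, but you should make sure the horizontal lift via $\nabla^{TM}$ really reproduces the basic connection and basic curvature of that same $\nabla^{TM}$ in the pullback bracket; otherwise fall back to the paper's weaker argument, which suffices for the lemma.
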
 

\begin{proof} For the sake of completeness, let us use the notations of \cite[Remark 2.5]{liaoAtiyahClassesTodd2023}. Choose a trivialization of $L$ and a local frame $e_1,\ldots, e_r$ of $A$, extended to a local frame $e_1,\ldots, e_r, e_{r+1},\ldots, e_{r+r'}$ of $L$, that in turn induces a local dual frame of $L^*$ denoted $e^1,\ldots, e^{r+r'}$.
Choosing a splitting of the sequence \eqref{shortexact}, let $\frac{\partial}{\partial \eta^1},\ldots, \frac{\partial}{\partial \eta^r}$ be the local frame of $A[1]$ obtained from $e_1,\ldots, e_r$ through the homotopy map $\widetilde{\tau}$, namely $\frac{\partial}{\partial \eta^i}=\widetilde{\tau}(e_i)$ and $\iota\left(\frac{\partial}{\partial \eta^i}\right)=e_i$. This choice of notation comes from the identification $TA[1]^{\text{vertical}}\simeq A[1]$, so that $\eta^1,\ldots,\eta^r$ is a local frame of $A[1]^*$ such that $\frac{\partial}{\partial \eta^j}(\eta^i)=\delta^i_j$.

     Pick up a $TM$-connection  on $A$ -- say $\nabla$, giving an $L$-connection on $A$ which in turn induces a basic connection $\nabla^{bas}$ (see Definition \ref{basconnnormal} and the preceding paragraph).
    This connection allows to write the $A$-superconnection on $E$ with respect to the fiber coordinates under a form that explicitly involves the basic connection. More precisely,
    the connection 1-form $\omega^{(1)}$ associated to $\nabla$ is expressed in local coordinates as     
    \begin{equation*}\omega^{(1)}=\sum_{\mu=1}^n \sum_{k,l=1}^{r}\omega_{\mu k}^l dx^\mu e_l\otimes e^k+\sum_{k,l=1}^{r}\omega_{\mu k}^l\, dx^\mu \frac{\partial}{\partial \eta^l}\otimes \eta^k.
    \end{equation*}
The dual connection $\nabla^*$, acting on $E^*$, is then associated to the following dual connection 1-form:
\begin{equation*}
\omega^{(1)*}=-\sum_{\mu=1}^n \sum_{k,l=1}^{r}\omega_{\mu k}^l dx^\mu  e^k\otimes e_l+\sum_{k,l=1}^{r}\omega_{\mu k}^l\, dx^\mu \eta^k\otimes \frac{\partial}{\partial \eta^l}.
\end{equation*}
    
With respect to the chosen trivialization of $L$, the basic connection $\nabla^{bas}$ induced by $\nabla$ induces a connection 1-form $\omega^{bas}$ whose expression in local coordinates is $\omega_{ij}^{bas,k}=\sum_{\mu=1}^n\rho_j^\mu\omega_{\mu i}^k+c_{ij}^k$, where the indices range from 1 to $r$ for $i$, and from 1 to $r+r'$ for $j,k$. Then, the Lie algebroid derivative $d_A$ can be recast under the  form
    \begin{equation*}
d_A = \sum_{i=1}^r  \eta^i \nabla^*_{\rho(e_i)} - \frac{1}{2}\sum_{i,j,k =1}^r T_{\nabla^{bas}}(e_i,e_j)^k \eta^i \eta^j  \frac{\partial}{\partial \eta^k}.
\end{equation*}
In particular see Equation (4.5) in \cite{chatzistavrakidisBasicCurvatureAtiyah2023}, reminding that $\nabla^{bas}$ is the opposite connection of the induced connection $\overset{\bullet}{\nabla}$.

     With respect to the choice of connection $\nabla$ in the given local trivialization, the other components of $D_A$ are given by:
\begin{align*}
D_A\Big(\frac{\partial}{\partial \eta^m}\Big) & = \sum_{i,k =1}^r \omega_{im}^{bas,k} \eta^i \frac{\partial}{\partial \eta^k} + e_m, \\
D_A \left(e_l\right) &  =  \frac{1}{2}  \sum_{i,j,k=1}^r R^{bas}(e_i,e_j)(e_l)^k \eta^i \eta^j \frac{\partial}{\partial \eta^k}  +  \sum_{k=1}^{r+r'} \sum_{i=1}^r  \omega_{il}^{bas,k} \eta^i\otimes e_k.
\end{align*}
for every $1\leq m\leq r$ and $1\leq l\leq r+r'$. Both formulas are obtained by hand computations, and one can compare them to Equations (15)-(16) of \cite{liaoAtiyahClassesTodd2023} to observe that $c_{ij}^k$ (resp. $\rho_l(c_{ij}^k)$) in the latter becomes $\omega_{ij}^{bas,k}$ (resp. $(R^{bas}_{ij})_l^k$) in the former.

In other words, the local expression of the homological vector field as given in \cite[Remark 2.5]{liaoAtiyahClassesTodd2023} corresponds to taking the $A$-superconnection on $E$ induced by the standard flat $TM$-connection on $A$ with respect to the chosen trivialisation of $L$, i.e. the unique connection whose connection 1-form vanishes when evaluated on the local frame.
     Thus, the expressions of $\mathcal{Q}$ in\cite[Remark 2.5]{liaoAtiyahClassesTodd2023} and that of $D_A$ here only differ by a choice of $TM$-connection on $A$, which, regarding the theory of representations up to homotopy, are essentially indistinguishable \cite{abadRepresentationsHomotopyLie2011}.
\end{proof}

\bibliography{Maths}

\begin{thebibliography}{}

\bibitem[Abad and Crainic, 2011]{abadRepresentationsHomotopyLie2011}
Abad, C.~A. and Crainic, M. (2011).
\newblock Representations up to homotopy of {{Lie}} algebroids.
\newblock {\em J. Reine Angew. Math.}, 2012(663):91--126.

\bibitem[Aschieri et~al., 2021]{aschieriCurvatureTorsionCourant2021}
Aschieri, P., Bonechi, F., and Deser, A. (2021).
\newblock On {{Curvature}} and {{Torsion}} in {{Courant Algebroids}}.
\newblock {\em Ann. Henri Poincar{\'e}}, 22(7):2475--2496.

\bibitem[Batakidis and Voglaire, 2018]{batakidisAtiyahClassesDgLie2018}
Batakidis, P. and Voglaire, Y. (2018).
\newblock Atiyah classes and dg-{{Lie}} algebroids for matched pairs.
\newblock {\em J. Geom. Phys.}, 123:156--172.

\bibitem[Blaom, 2006]{blaomGeometricStructuresDeformed2006}
Blaom, A. (2006).
\newblock Geometric structures as deformed infinitesimal symmetries.
\newblock {\em Trans. Amer. Math. Soc.}, 358(8):3651--3671.

\bibitem[Chatzistavrakidis and Jonke,
  2023]{chatzistavrakidisBasicCurvatureAtiyah2023}
Chatzistavrakidis, A. and Jonke, L. (2023).
\newblock Basic curvature and the {{Atiyah}} cocycle in gauge theory.

\bibitem[Chen et~al., 2019]{chenAtiyahClassesStrongly2019}
Chen, Z., Lang, H., and Xiang, M. (2019).
\newblock Atiyah classes of strongly homotopy {{Lie}} pairs.
\newblock {\em Algebra Colloq.}, 26(02):195--230.

\bibitem[Chen et~al., 2016]{chenAtiyahClassesHomotopy2016}
Chen, Z., Sti{\'e}non, M., and Xu, P. (2016).
\newblock From {{Atiyah}} classes to homotopy {{Leibniz}} algebras.
\newblock {\em Commun. Math. Phys.}, 341(1):309--349.

\bibitem[Crainic and Fernandes,
  2005]{crainicSecondaryCharacteristicClasses2005}
Crainic, M. and Fernandes, R.~L. (2005).
\newblock Secondary {{Characteristic Classes}} of {{Lie Algebroids}}.
\newblock In {\em Quantum {{Field Theory}} and {{Noncommutative Geometry}}},
  pages 157--176. Springer, Berlin, Heidelberg.

\bibitem[Fairon, 2017]{faironIntroductionGradedGeometry2017}
Fairon, M. (2017).
\newblock Introduction to graded geometry.
\newblock {\em Eur. J. Math.}, 3(2):208--222.

\bibitem[Kotov and Salnikov, 2024]{kotovCategoryGradedManifolds2024}
Kotov, A. and Salnikov, V. (2024).
\newblock The category of z-graded manifolds: What happens if you do not stay
  positive.
\newblock {\em Differ. Geom. Appl.}, 93:102109.

\bibitem[{Laurent-Gengoux} et~al.,
  2020]{laurent-gengouxUniversalLieInfinityalgebroid2020}
{Laurent-Gengoux}, C., Lavau, S., and Strobl, T. (2020).
\newblock The universal {{Lie}} infinity-algebroid of a singular foliation.
\newblock {\em Doc. Math.}, 25:1571--1652.

\bibitem[Liao, 2023]{liaoAtiyahClassesTodd2023}
Liao, H.-Y. (2023).
\newblock Atiyah {{classes}} and {{Todd classes}} of {{pullback}} dg {{Lie
  algebroids associated}} with {{Lie pairs}}.
\newblock {\em Commun. Math. Phys.}, 404(2):701--734.

\bibitem[Mackenzie, 2005]{mackenzieGeneralTheoryLie2005}
Mackenzie, K. C.~H. (2005).
\newblock {\em General {{Theory}} of {{Lie Groupoids}} and {{Lie Algebroids}}}.
\newblock London {{Mathematical Society Lecture Note Series}}. Cambridge
  University Press, Cambridge.

\bibitem[Mehta, 2006]{mehtaSupergroupoidsDoubleStructures2006}
Mehta, R.~A. (2006).
\newblock {\em Supergroupoids, Double Structures, and Equivariant Cohomology}.
\newblock PhD thesis, University of California, Berkeley.

\bibitem[Mehta, 2009]{mehtaQalgebroidsTheirCohomology2009}
Mehta, R.~A. (2009).
\newblock Q-algebroids and their cohomology.
\newblock {\em J. Symplectic Geom.}, 7(3):263--293.

\bibitem[Mehta, 2014]{mehtaLieAlgebroidModules2014}
Mehta, R.~A. (2014).
\newblock Lie algebroid modules and representations up to homotopy.
\newblock {\em Indag. Math.}, 25(5):1122--1134.

\bibitem[Mehta et~al., 2015]{mehtaAtiyahClassDgvector2015}
Mehta, R.~A., Sti{\'e}non, M., and Xu, P. (2015).
\newblock The {{Atiyah}} class of a dg-vector bundle.
\newblock {\em C. R. Math.}, 353(4):357--362.

\bibitem[Quillen, 1985]{quillenSuperconnectionsChernCharacter1985}
Quillen, D. (1985).
\newblock Superconnections and the {{Chern}} character.
\newblock {\em Topology}, 24(1):89--95.

\bibitem[Sti{\'e}non et~al., 2022]{stienonAinfinityAlgebrasLie2022}
Sti{\'e}non, M., Vitagliano, L., and Xu, P. (2022).
\newblock A-infinity algebras from {{Lie}} pairs.

\bibitem[Sti{\'e}non and Xu, 2020]{stienonFedosovDgManifolds2020}
Sti{\'e}non, M. and Xu, P. (2020).
\newblock Fedosov dg manifolds associated with {{Lie}} pairs.
\newblock {\em Math. Ann.}, 378(1):729--762.

\bibitem[Weibel, 1994]{weibelIntroductionHomologicalAlgebra1994}
Weibel, C.~A. (1994).
\newblock {\em An {{Introduction}} to {{Homological Algebra}}}.
\newblock Cambridge {{Studies}} in {{Advanced Mathematics}}. Cambridge
  University Press, Cambridge.

\bibitem[Zambon, 2022]{zambonSingularSubalgebroids2022}
Zambon, M. (2022).
\newblock Singular subalgebroids.
\newblock {\em Ann. Inst. Fourier}, 72(6):2109--2190.

\end{thebibliography}

\end{document}